\documentclass[11pt]{amsart}
\usepackage[letterpaper, portrait, margin=1in]{geometry}
\usepackage[english]{babel}

\usepackage{setspace}
\setstretch{1.15} 
\usepackage{graphicx}
\usepackage{tikz-cd}

\usepackage{wrapfig}
\usepackage[mathscr]{euscript}
\usepackage{dsserif}

\theoremstyle{plain}
\usepackage{amsmath}
\usepackage{amssymb} 
\usepackage{amsthm}
\usepackage{mathtools}
\usepackage{enumitem}
\usepackage{thmtools}

\usepackage{hyperref} 
\usepackage[nameinlink]{cleveref}
\hypersetup{
	colorlinks = true,
	linkcolor= [RGB]{105, 2, 184},
	citecolor= [RGB]{105, 2, 184},
}

\def\R{\mathbb{R}}
\def\Z{\mathbb{Z}}
\def\H{\mathbb{H}}

\DeclareMathOperator{\Teich}{\mathcal{T}}
\DeclareMathOperator{\Isom}{Isom}

\DeclareMathOperator{\Ends}{Ends}
\DeclareMathOperator{\Map}{Map}
\DeclareMathOperator{\Homeo}{Homeo}
\DeclareMathOperator{\diam}{diam}

\DeclareMathOperator{\AH}{\textrm{AH}}

\DeclareMathOperator{\id}{id}

\DeclareMathOperator{\capgen}{\mathbb{g}}

\renewcommand{\bar}[1]{\overline{#1}}
\renewcommand{\int}{\textrm{int}}
\renewcommand{\emptyset}{\varnothing}

\newcommand{\Lam}{\Lambda}
\newcommand{\gammaSig}{\gamma(\Sigma)}
\newcommand{\gammaPSig}{\gamma'(\Sigma)}
\newcommand{\pSig}{p_{\scriptscriptstyle\Sigma}}
\newcommand{\phiSig}{\phi_{\scriptscriptstyle\Sigma}}
\newcommand{\M}{\bar M}
\newcommand{\Cmin}{C_{\textrm{min}}}
\newcommand{\jpp}{\ensuremath{J^+(k,\infty)}}
\newcommand{\jmp}{\ensuremath{J^-(k,\infty)}}
\newcommand{\jmm}{\ensuremath{J^-(-\infty,k)}}
\newcommand{\jpm}{\ensuremath{J^+(-\infty,k)}}
\newcommand{\cal}{\CMcal}
\newcommand{\eL}{\cal{L}}
\newcommand{\AC}{\cal{AC}}
\newcommand{\calO}{\cal{O}}
\newcommand{\lil}{\scriptscriptstyle}
\newcommand{\pcalO}{p_{\lil \calO}}
\renewcommand{\tilde}{\widetilde}
\usepackage{stmaryrd}
\theoremstyle{plain}
\newtheorem{proposition}{Proposition}[section]
\newtheorem{theorem}[proposition]{Theorem}
\newtheorem*{theorem*}{Theorem}
\newtheorem{mytheorem}{Theorem}

\newtheorem{lemma}[proposition]{Lemma}
\newtheorem{corollary}[proposition]{Corollary}
\newtheorem{question}{Question}

\newtheorem{example}[proposition]{Example}
\newtheorem{casee}{Type}

\newtheorem{remark}[proposition]{Remark}
\newtheorem{conjecture}[proposition]{Conjecture}
\newtheorem{claim}[proposition]{Claim}
\newtheorem{observation}[proposition]{Observation}

\newenvironment{enumeri}{\begin{enumerate}[label=(\roman*)]}{\end{enumerate}}

\title{Short curves of end-periodic mapping tori}

\author[B. Whitfield]{Brandis Whitfield}
\address{Department of Mathematics, Temple University}
\email{brandis@temple.edu}
\urladdr{http://sites.google.com/view/algebrandis/}

\begin{document}

\begin{abstract}
Let $S$ be a boundaryless infinite-type surface with finitely many ends and consider an end-periodic homeomorphism $f$ of S. The end-periodicity of $f$ ensures that $M_f$, its associated mapping torus, has a compactification as a $3$-manifold with boundary; further, if $f$ is atoroidal, then $M_f$ admits a hyperbolic metric. Such maps admit invariant \emph{positive and negative Handel-Miller laminations}, $\Lambda^+$, $\Lambda^-$, whose leaves naturally project to the arc and curve complex of a given compact subsurface $Y\subset S$.
 
As an end-periodic analogy to work of Minsky in the finite-type setting, we show that for every $\epsilon>0$ there exists $K> 0$ (depending only on $\epsilon$ and the \emph{capacity} of $f$) for which $d_Y (\Lambda^+, \Lambda^-)\geq K$ implies $\inf_{\sigma\in \text{AH}(M_f)}\{\ell_\sigma(\partial Y)\} \leq \epsilon$. Here $\ell_\sigma (\partial Y)$ denotes the total geodesic length of $\partial Y$ in $(M_f, \sigma)$, and the infimum is taken over all hyperbolic structures on $M_f$.

This work produces the following: given a closed surface $\Sigma$, we provide a family of closed, fibered hyperbolic manifolds in which $\Sigma$ is totally geodesically embedded, (almost) transverse to the pseudo-Anosov flow, with arbitrarily small systole.
\end{abstract}
\maketitle

\setcounter{tocdepth}{1}
\tableofcontents
\section{Introduction}
The interplay between laminations, mapping classes of surfaces, and hyperbolic $3$-manifolds is central to the study of surfaces and the geometry of their mapping tori. 
The geometry of a closed, fibered, hyperbolic $3$-manifold may be further understood by analyzing the combinatorial data of its monodromy.
 While the connections between such objects are well-studied in the finite-type setting, our work examines fibered hyperbolic $3$-manifolds constructed via end-periodic homeomorphisms of infinite-type surfaces.
 
Let $S$ be a boundaryless infinite-type surface with finitely many ends, all non-planar. We say a homeomorphism $f$ of $S$ is \emph{end-periodic} if each of its ends is either \emph{attracting} or \emph{repelling} under $f$, and that is it \emph{atoroidal} if it admits no periodic curve up to isotopy. Field-Kim-Leininger-Loving \cite{FKLL23} provide plentiful examples of such maps as the composition of commuting handle shifts with sufficiently large powers of a pseudo-Anosov map on a compact subsurface of $S$. In recent work, Field-Kent-Leininger-Loving \cite{FKeLL23} introduce a description of the complexity of an end-periodic map by the pair $(\chi(f),\xi(f))$; intuitively, $\xi(f)$ measures the amount of shifting in and out of the ends, and $\chi(f)$ is the complexity of a compact subsurface known as a \textit{minimal core} of the map $f$. 
As a joint notion of complexity, we define the \textit{capacity genus of $f$} by $\capgen(f)= \chi(f) + \xi(f)^2$.
We refer the reader to \Cref{capacityDefn} for precise definitions of these terms.\\

The \textit{mapping torus} of $f$ is the fibered, boundaryless $3$-manifold with monodromy $f$ defined by the quotient $$M_f := S \times [0,1] / (x,1)\sim(f(x),0).$$  Interestingly, the end-periodic nature of the homeomorphism allows its mapping torus to admit a compactification $\M_f$ as a $3$-manifold with boundary (see e.g. \Cref{mfconstruct}). The structure of the homeomorphism's action on curves determine topological properties of the manifold. In particular, if $f$ is atoroidal, then $M_f$ is atoroidal, and by a further application of Thurston's hyperbolization theorem for manifolds with boundary, we have that $M_f$ is hyperbolizable (see \cite{FKLL23} as stated for irreducible end-periodic maps). In general, when an irreducible manifold with incompressible boundary $M$ satisfies $\chi(\partial M) <0$,  if its interior admits a hyperbolic structure, then it admits a plethora of hyperbolic structures parameterized by the deformation space $\text{AH}(M)$ (see \cite[Theorem 8.4]{Kap01}). 

In unpublished work during the 1990s, Handel and Miller developed a lamination theory for end-periodic maps introducing the \emph{negative} and \emph{positive Handel-Miller laminations} $\Lambda^-$ and $\Lambda^+$ of $S$. As one would hope, there are many parallels between the Handel-Miller laminations of an end-periodic homeomorphism and the stable and unstable measured laminations $\lambda^+, \lambda^-$ of a pseudo-Anosov homeomorphism of a closed surface (see \cite[Section 8]{LMT23}, \cite[Section 4]{CCF21}).  From the perspective of a compact subsurface $Y \subset S$, the leaves of the positive (resp. negative) Handel-Miller lamination project to $\pi_Y(\Lambda^\pm)$, a finite (possibly empty), disjoint collection arcs in the arc and curve complex $\mathcal{AC}(Y)$ of $Y$. From there we may consider the quantity $d_Y(\Lambda^+, \Lambda^-)$ which measures the distance between the respective subsurface projections $\pi_Y(\Lambda^+)$ and $\pi_Y(\Lambda^-)$ in $\mathcal{AC}(Y)$.

Given a subsurface $Y\subset S \subset M_f$ and a hyperbolic structure $s\in\AH(M_f)$, we consider $\ell_s(\partial Y)$, the summed length of the geodesic representatives of each component of $\partial Y$ in $(M_f, s)$.
  
\begin{mytheorem}\label{thma}
For any $D, \varepsilon > 0$, there exists $K=K(D,\epsilon)$ such that for any atoroidal end-periodic homeomorphism $f:S \to S$ with capacity genus $\capgen(f)\leq D$, there exists $s\in \AH(M_f)$ satisfying the following:  

For any connected, compact subsurface $Y\subset S\subset M_f$, we have:
\[ d_Y(\Lambda^+, \Lambda^-)\geq K\, \implies\, \ell_{s}(\partial Y)\leq \varepsilon.\]
\end{mytheorem}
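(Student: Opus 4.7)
The plan is to adapt Minsky's finite-type short-curve theorem to the end-periodic setting through an approximation and geometric limit argument. Specifically, I would approximate $f$ by pseudo-Anosov homeomorphisms on closed surfaces, apply Minsky's theorem there, and pass both the hyperbolic structure $s$ and the length inequality through a geometric limit on mapping tori.

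Concretely, using the structural description of end-periodic maps from \cite{FKLL23} as compositions of commuting handle shifts with a pseudo-Anosov on a minimal core, I would construct a sequence of pseudo-Anosov homeomorphisms $\phi_n \colon \Sigma_n \to \Sigma_n$ on closed surfaces, where $\Sigma_n$ is built to exhaust $S$ and $\phi_n$ agrees with $f$ on larger and larger compact subsurfaces (for instance, by cutting and regluing after sufficiently high powers of the pA core). The closed hyperbolic mapping tori $M_{\phi_n}$ are produced by Thurston's hyperbolization, and after basepointing in the thick part of a fiber, one extracts a geometric limit $(M_\infty,s)$. The capacity-genus bound $\capgen(f)\leq D$ should force $M_\infty\cong M_f$ with $s\in\AH(M_f)$, by controlling the topology of $\Sigma_n$ and the shifting complexity of $\phi_n$ uniformly.

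For any compact subsurface $Y\subset S$, the stable/unstable laminations $\lambda_n^\pm$ of $\phi_n$ should have $Y$-projections that agree with $\pi_Y(\Lambda^\pm)$ up to a bounded error once $n$ is large, so $d_Y(\Lambda^+,\Lambda^-)\geq K$ forces $d_Y(\lambda_n^+,\lambda_n^-)\geq K-c$. Minsky's finite-type short-curve theorem applied to each $\phi_n$, with constants depending on $\chi(\Sigma_n)$ (itself controlled by $D$), then yields $\ell_{M_{\phi_n}}(\partial Y)\leq \varepsilon/2$ provided $K=K(D,\varepsilon)$ is taken large enough. Passing to the geometric limit, and using the continuity of closed-geodesic length under geometric convergence of pointed hyperbolic $3$-manifolds, gives $\ell_s(\partial Y)\leq \varepsilon$.

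The main obstacle is executing the geometric limit rigorously with uniform constants. First, one must verify that the limit $(M_\infty,s)$ exists and lies in $\AH(M_f)$ rather than degenerating or wrapping onto a cover; this requires choosing basepoints that remain in the thick part of a fiber and using the $\xi(f)^2$ term in $\capgen(f)$ to bound the amount of shifting behavior across the ends of $S$. Second, one must check that subsurface projections converge correctly despite the fact that the Handel-Miller leaves may exit the ends of $S$, so that a leaf of $\Lambda^\pm$ can be reliably traced by leaves of $\lambda_n^\pm$ on each fixed compact $Y$. An alternative route that bypasses the geometric limit would be to fix $s$ as the interior structure of the geometrically finite hyperbolization of $\bar M_f$ and argue directly via pleated surfaces realizing $\Lambda^\pm$-leaves; this avoids the convergence issues but demands an end-periodic pleated-surface compactness statement in place of Minsky's original input.
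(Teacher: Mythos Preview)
Your approach has a genuine gap at its core. You propose closed surfaces $\Sigma_n$ that ``exhaust $S$'' and then claim that $\chi(\Sigma_n)$ is ``controlled by $D$''. These two requirements are incompatible: if $\Sigma_n$ exhausts an infinite-type surface, then $|\chi(\Sigma_n)|\to\infty$, and Minsky's constant $K'(\Sigma_n,\varepsilon)$ blows up with it. Without a uniform bound on the fiber complexity, you cannot extract a single $K=K(D,\varepsilon)$ that works for all subsurfaces $Y$. There is also a secondary error: the description of atoroidal end-periodic maps as ``compositions of commuting handle shifts with a pseudo-Anosov on a minimal core'' is a source of \emph{examples} in \cite{FKLL23}, not a structure theorem, so your approximation scheme is not even defined for a general $f$.

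The paper circumvents the exhaustion problem entirely. Rather than approximating $f$ by a sequence, it uses the Landry--Minsky--Taylor $h$-double construction to embed the compactified mapping torus $\bar M_f$ into a \emph{single} closed fibered hyperbolic manifold $N=N(M,h)$ whose fiber $F$ has genus bounded by $\capgen(f)\leq D$. The key combinatorial step (Lemma~\ref{lemma:corelemma}) is that any suitable subsurface $Y\subset S$ with $d_Y(\Lambda^+,\Lambda^-)\geq 4$ can be $f$-translated into a fixed $f$-extended core $C$, and hence sits (up to isotopy in $N$) inside the closed fiber $F$. A comparison of Handel--Miller and pseudo-Anosov laminations (via \cite{LMT23}) transfers the projection bound to $F$, Minsky's theorem is applied once in $N$, and the metric $s$ on $M_f$ is simply the pullback along the covering $M_f\to N$. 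No geometric limits, no sequence of fibers, and the constant $K$ comes directly from Minsky's theorem on a genus-$D$ surface.
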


We note that our proof begins with an arbitrary atoroidal end-periodic homeomorphism $f$ and produces a hyperbolic metric $s\in \AH(M_f)$ satisfying the implication of \Cref{thma}. In particular, we emphasize that the metric $s$ is chosen independently of $D$ and $\varepsilon$. We hope such a statement, with constants controlled by the capacity genus of the homeomorphism, holds for any hyperbolic metric in $\AH(M_f)$

When $\M_f$ is acylindrical, equivalently when $f$ is \textit{strongly irreducible} (see \Cref{fkllhyp}), there exists a unique metric $s^* \in \AH(M_f)$ whose convex core has totally geodesic boundary \cite[Section 4]{Mc90}. For a strongly irreducible homeomorphism $f$, let $s^*$ denote this unique structure. We wish to strengthen our main theorem so that $s^*$ realizes the bounds of \Cref{thma} in the following way:

\begin{conjecture}\label{conj:tgmetric}
	For any $D, \varepsilon > 0$, there exists $K=K(D,\varepsilon)$ such that for any atoroidal end-periodic homeomorphism $f:S \to S$ with capacity genus $\capgen(f)\leq D$, and any connected, compact subsurface $Y\subset S\subset M_f$, we have:
	$$ d_Y(\Lambda^+, \Lambda^-)\geq K \,\implies\, \ell_{s^*}(\partial Y)\leq \varepsilon.$$
\end{conjecture}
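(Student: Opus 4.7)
The approach I propose is to prove \Cref{conj:tgmetric} directly in $(M_f, s^*)$ via a pleated surface sweepout, using the totally geodesic boundary $\partial \M_f$ to anchor the construction. The key point is that the convex core of $s^*$ is $\M_f$ itself with totally geodesic boundary, so components of $\partial \M_f$ lift to disjoint totally geodesic planes in $\mathbb{H}^3$, providing a rigid framework within which pleated surfaces can be realized. This geometric rigidity is precisely what the existential metric produced by \Cref{thma} lacks, and it is what should allow the short curve estimate to be realized by $s^*$ specifically.

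The concrete plan adapts Minsky's halfway-surface strategy from the finite-type setting. For each compact subsurface $S_0 \subset S$ containing $Y$, I would construct a continuous family $\{g_t : (S_0, \partial S_0) \to (\M_f, \partial \M_f)\}_{t \in [0,1]}$ of pleated maps realizing laminations $\mu_t$ that interpolate between compact approximations of $\Lambda^+$ and $\Lambda^-$, with $\partial S_0$ sent to geodesic curves on $\partial \M_f$. Since each $g_t$ is $1$-Lipschitz, $\ell_{s^*}(\partial Y)$ is bounded above by the length of $\partial Y$ in the pulled-back hyperbolic structure on $S_0$. Minsky's standard halfway-surface argument in $\mathcal{AC}(Y)$, combined with twist estimates controlled by $\capgen(f) \le D$, should then produce a parameter $t_0$ for which $\partial Y$ lies in the pleating locus of $g_{t_0}$ with small length in the domain, forcing $\ell_{s^*}(\partial Y) \le \varepsilon$ whenever $d_Y(\Lambda^+, \Lambda^-) \ge K(D, \varepsilon)$.

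The main obstacle is producing the pleated surface sweepout with the required boundary behavior. In Minsky's finite-type setting, pleated surfaces arise naturally from the pseudo-Anosov fibration and compactness automatically yields continuity of the family; here $S$ is infinite-type and the monodromy is only end-periodic, so one must work with pleated subsurfaces $S_0$ whose boundaries map to geodesics on $\partial \M_f$, and show that the asymptotic behavior as $S_0$ exhausts $S$ is uniformly controlled by the capacity bound and the totally geodesic boundary. A useful reformulation is the doubling trick: because $f$ is strongly irreducible, $\M_f$ is acylindrical, so the double $\M_f \cup_{\partial} \M_f$ is closed, irreducible, and atoroidal, and Thurston's hyperbolization together with Mostow rigidity equip it with a unique hyperbolic metric whose doubling involution is realized as an isometry fixing $\partial \M_f$ as a totally geodesic surface; restricting to one copy recovers $s^*$. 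This packages the boundary condition as $\Z/2$-equivariance in a closed hyperbolic manifold, and may streamline the convergence arguments, though it does not remove the essential technical difficulty of working with infinite-type pleated surfaces.
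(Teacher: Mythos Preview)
The statement you are attempting to prove is labeled a \emph{conjecture} in the paper and is not proven there. The paper establishes only the special case in which $\partial \M_f$ consists entirely of genus-$2$ surfaces (\Cref{thmb}), and explicitly leaves the general statement open. So there is no ``paper's own proof'' to compare against; instead I compare your proposal to the paper's partial result and its method.

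The paper's route to \Cref{thmb} is not a direct pleated-surface argument in $(M_f,s^*)$. Rather, it constructs an $h$-double $N=N(M,h)$ which is closed, hyperbolic, and \emph{fibered}, with a fiber $F$ of genus controlled by $\capgen(f)$, and then applies Minsky's \Cref{minsky} to $N$. The crucial point for the totally geodesic metric is that in the genus-$2$ case the gluing $h$ can be taken to be a hyperelliptic involution, which acts trivially on Teichm\"uller space and is therefore realized by an isometry of $(\partial M, s^*)$; this forces $(\M_f,s^*)\hookrightarrow N$ to be an isometric embedding (\Cref{tg-immersion}), so short curves in $N$ are short in $s^*$. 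For higher-genus boundary the paper cannot guarantee that the required $h$ (which must preserve and orientation-reverse the core boundary $\partial C$ to make $N$ fibered) is simultaneously an isometry of $s^*$, and this is exactly the obstruction to the conjecture via the paper's method.

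Your doubling remark at the end is close to the paper's mechanism but misses this point. Doubling $\M_f$ along the identity does produce a closed atoroidal manifold in which $(\M_f,s^*)$ sits with totally geodesic boundary, and Mostow rigidity does make the involution an isometry. But the identity double has no reason to be \emph{fibered} with fiber of bounded genus, so Minsky's \Cref{minsky} does not apply and you have no mechanism to produce short curves there. The paper's $h$-double is carefully engineered to be fibered; your identity double sacrifices exactly that.

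Your primary proposal---a direct pleated-surface sweepout in $(M_f,s^*)$ interpolating between $\Lambda^+$ and $\Lambda^-$---is a genuinely different strategy that would, if it succeeded, bypass the fibering requirement entirely and hence the genus-$2$ restriction. You correctly identify the main obstacle: $S$ is infinite-type, so one must work with compact subsurfaces $S_0$ mapping boundary to $\partial \M_f$ and control the sweepout uniformly in $S_0$. This is a real difficulty, and your sketch does not indicate how the capacity bound $\capgen(f)\le D$ enters to provide that uniformity (in the paper's approach it enters only through the genus of the fiber $F$ of $N$, which is unavailable to you). As written the proposal is a plausible program rather than a proof, and the paper does not pursue it.
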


Such a result would gain control over the totally geodesic surfaces which arise as boundaries of compact, irreducible, acylindrical hyperbolic manifolds with boundary. The next theorem confirms the conjecture when $\partial M$ is restricted to consist solely of genus-$2$ surfaces. 

\begin{mytheorem}\label{thmb}
	For any $D, \varepsilon > 0$, there exists $K$ satisfying the following: suppose that $f: S\to S$ is an atoroidal end-periodic homeomorphism  whose compactified boundary $\partial M$ consists solely of genus-$2$ surfaces, and that $\capgen(f)\leq D$; then for any compact essential subsurface $Y\subset S$ $\subset M_f$, we have:
$$ d_Y(\Lambda^+, \Lambda^-)\geq K \,\implies\, \ell_{s^*}(\partial Y)\leq \varepsilon.$$
\end{mytheorem}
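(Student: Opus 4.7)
The plan is to deduce \Cref{thmb} from \Cref{thma} by a compactness argument enabled by the strong topological constraint that every component of $\partial \bar M_f$ has genus $2$. Fix $D, \varepsilon > 0$ and suppose for contradiction that no such $K$ exists: there is a sequence of atoroidal, strongly irreducible end-periodic homeomorphisms $f_n : S_n \to S_n$ with $\capgen(f_n) \leq D$ and genus-$2$ boundary components, together with compact essential subsurfaces $Y_n \subset S_n$, such that $d_{Y_n}(\Lambda^+_n, \Lambda^-_n) \to \infty$ yet $\ell_{s^*_n}(\partial Y_n) \geq \varepsilon$. Strong irreducibility may be assumed throughout, since otherwise $s^*_n$ is not defined and the statement has no content.

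Boundedness of the capacity genus, together with the genus-$2$ boundary assumption, restricts each $\bar M_{f_n}$ to one of finitely many homeomorphism types; after passing to a subsequence we may assume the compactified mapping tori share a fixed compact $3$-manifold $\bar M$. Each $s^*_n$ is then a point in the common deformation space $\AH(\bar M)$, parameterized by $\Teich(\partial \bar M)$. By \Cref{thma}, we can also produce companion metrics $s_n \in \AH(\bar M)$ with $\ell_{s_n}(\partial Y_n) \to 0$. The goal reduces to showing that the totally-geodesic-boundary metric $s^*_n$ cannot differ too severely from $s_n$ on the short-curve locus. The route I would pursue uses the skinning-map fixed-point characterization of $s^*_n$ from \cite{Mc90}: starting from the conformal data of $s_n$ and iterating Thurston's skinning map, one hopes to show that the length of $\partial Y_n$ cannot recover past a uniform multiple of $\ell_{s_n}(\partial Y_n)$ before the iteration converges to $s^*_n$, yielding the desired contradiction.

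The main obstacle is the quantitative length-stability along the skinning iteration. Length of a fixed homotopy class is not monotone under the skinning map, so one must either upgrade the contraction of \cite{Mc90} to a quantitative statement on the length spectrum of short curves, or effect a drilling/filling comparison between $s_n$ and $s^*_n$. The genus-$2$ hypothesis enters in a critical way at this step: it bounds the dimension of $\Teich(\partial \bar M)$ and forces a hyperelliptic symmetry on each boundary component, which together allow the relevant comparison constants to depend only on $D$. A secondary subtlety is that the Handel-Miller laminations $\Lambda^\pm_n$ must behave compatibly in any geometric limit; one must verify, using the lamination theory of \cite{LMT23, CCF21} together with the capacity bound, that large subsurface projection distance for $Y_n$ persists in the limit and so obstructs a non-trivial lower bound on $\ell_{s^*_n}(\partial Y_n)$.
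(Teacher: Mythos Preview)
Your proposal has a genuine gap and misses the paper's key observation. The paper's proof is not a limiting or compactness argument at all; it is a direct reuse of the proof of \Cref{thma} with one crucial additional input. Recall that the metric $s$ produced in \Cref{thma} arises by taking the cover of the $h$-double $N(M,h)$ corresponding to $\pi_1(M)$. The paper observes that on a genus-$2$ surface the hyperelliptic involution acts trivially on Teichm\"uller space, so for \emph{every} hyperbolic metric on $\Sigma$ it is realized by an isometry. In particular one may take the gluing map $h$ to be an isometry of the totally geodesic boundary metric $(\partial M, s^*)$ itself, while still preserving and reversing the chosen juncture curve $\gamma(\Sigma)$. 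With $h$ an isometry, $(M,s^*)$ embeds isometrically into $N(M,h)$, and hence the cover metric $s$ from \Cref{thma} \emph{is} $s^*$. The rest is identical to the proof of \Cref{thma}: apply \Cref{spf} and Minsky's theorem to get $\ell_{s^*}(\partial Y)=\ell_N(\partial W)\le\varepsilon$.

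Your route, by contrast, tries to compare an arbitrary metric $s_n$ to $s^*_n$ via skinning iteration, and you correctly flag that length-stability under the skinning map is the obstacle---but you do not resolve it, so the argument is incomplete as written. Two further issues: first, bounded capacity genus and genus-$2$ boundary do not obviously force finitely many homeomorphism types of $\bar M_{f_n}$ (the capacity bounds the core and boundary complexity, not the full topology of the compactified mapping torus), so your reduction to a single $\AH(\bar M)$ is unjustified. Second, even granting that reduction, the skinning-map comparison you sketch would require serious new work, whereas the paper's argument needs only the elementary fact that the genus-$2$ hyperelliptic involution is always an isometry. You noticed hyperelliptic symmetry was relevant but did not see that it makes the gluing isometric, which is precisely what collapses the distinction between $s$ and $s^*$.
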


Results in \Cref{examples} confirm that this phenomenon is not restricted to the genus-$2$ setting. For each genus $g$, \Cref{pinch} provides a family of strongly irreducible homeomorphisms $f_n$, whose boundary consists of genus-$g$ surfaces with arbitrarily thin totally geodesic boundary metrics.

\subsection{Totally geodesic boundary structures}

Let $\Sigma= \Sigma_1 \,\sqcup \ldots \sqcup \, \Sigma_n$ be a closed surface, and let $\mathcal{M}(\Sigma)$ denote its moduli space: the unmarked space of hyperbolic structures on $\Sigma$. Let $\mathcal{S}(\Sigma)$ denote the collection of all strongly irreducible homeomorphisms $f$ on any boundary-less infinite-type surfaces with finitely many, all non-planar ends satisfying $ \partial \M_f \cong \Sigma$. 

When $f$ is strongly irreducible, we let $(\M_f, s^*_f)$ denote its acylindrical mapping torus adorned with its unique totally geodesic boundary structure. Let $\mathfrak{F}(\Sigma) \subset \mathcal{M}(\Sigma)$ denote the collection of unmarked totally geodesic boundary metrics recovered over all $f\in \mathcal{S}(\Sigma)$, i.e.:  $$\mathfrak{F}(\Sigma):= \{(\Sigma, s^*_f)\,|\, f\in \mathcal{S}(\Sigma)\}.$$ 

\begin{question}\label{bdrystructures}
	Suppose $\mathcal{S}(\Sigma)$ is non-empty. Is $\mathfrak{F}(\Sigma)$ dense in $\mathcal{M}(\Sigma)$?
\end{question}

The final example of the paper provides a sliver of hope toward a positive answer to \Cref{bdrystructures}, namely that for $\Sigma=\Sigma^+_g \sqcup \Sigma^-_g$, the disjoint union of genus-$g$ surfaces, we have $\pi_\pm(\mathfrak{F}(\Sigma))$ escapes each $\epsilon$-thick part of $\mathcal{M}(\Sigma_\pm)$, where $\pi_\pm: \mathcal{M}(\Sigma) \to \mathcal{M}(\Sigma_\pm)$ denotes the projection to each factor. This construction can be extended to the union of an even number of connected genus-$g$ surfaces $\Sigma = \Sigma_g^1 \sqcup \ldots \sqcup \Sigma_g^{2n}$, but for readability of the argument we only consider the disjoint union of two connected surfaces.

Our work in the world of infinite-type surfaces culminates in a result about totally geodesic closed surfaces of a closed, fibered hyperbolic manifold. Given a hyperbolic surface $X$, we let $\ell(X)$ denote the length of its systole, i.e. its shortest closed geodesic. 

\begin{mytheorem}\label{thmc}
	For each $g\geq 2, \varepsilon>0$, there exists a closed, fibered, hyperbolic manifold $N=N(g,\varepsilon)$ satisfying the following. $N$ admits a totally geodesic embedding of a genus-$g$ hyperbolic surface $\Sigma$ with $\ell(\Sigma) \leq \varepsilon$.
	Further, $\Sigma$ is (almost) transverse to a circular pseudo-Anosov flow on $N$. 
\end{mytheorem}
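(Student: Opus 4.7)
The proof plan is to apply the construction developed in \Cref{examples}, and in particular \Cref{pinch}, to produce strongly irreducible atoroidal end-periodic homeomorphisms whose compactified mapping tori have totally geodesic boundary of arbitrarily short systole; then to double along this boundary, extracting the closed fibered hyperbolic manifold $N$, the embedded totally geodesic surface, and the circular pseudo-Anosov flow.

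Concretely, fix $g\geq 2$ and $\varepsilon>0$. From \Cref{pinch}, choose a strongly irreducible atoroidal end-periodic homeomorphism $f:S\to S$ of an infinite-type surface whose compactified mapping torus $\M_f$ has totally geodesic boundary $(\partial\M_f, s_f^*)$ consisting of genus-$g$ surfaces each of systole less than $\varepsilon$. Because the boundary is totally geodesic, the hyperbolic metric $s_f^*$ extends by reflection to an isometric hyperbolic structure on the closed $3$-manifold
\[
N := \M_f \cup_{\partial\M_f} \M_f,
\]
obtained by gluing two copies of $\M_f$ along their totally geodesic boundary. The doubling locus $\Sigma\subset N$ is then a totally geodesic embedded surface each of whose components is a closed genus-$g$ hyperbolic surface inheriting its systole from $(\partial\M_f, s_f^*)$; in particular $\ell(\Sigma)\leq \varepsilon$, and any single component of $\Sigma$ provides the totally geodesic surface sought in the theorem.

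It remains to upgrade $N$ to a fibered manifold carrying a circular pseudo-Anosov flow which is almost transverse to $\Sigma$. The interior $M_f$ fibers over $S^1$ with (infinite-type) fiber $S$ and carries the natural vertical suspension flow; the compactification endows $\M_f$ with a depth-one foliation whose depth-one leaves are exactly the components of $\partial\M_f$. Doubling should convert these depth-one leaves into interior leaves of a genuine depth-zero foliation on $N$, yielding a fibration $N\to S^1$ whose closed fibers are obtained by gluing fibers $S\times\{t\}$ of $M_f$ to their mirror images across the end-compactification. Dually, the Handel-Miller laminations $\Lambda^\pm$ on $S$ carry through the doubling to provide stable/unstable laminations for a pseudo-Anosov flow on $N$ obtained by gluing the vertical flows of the two halves; transversality to $\Sigma$ holds away from at most finitely many closed orbits tangent to the doubling locus, producing the ``almost transverse'' condition.

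The main obstacle is rigorously establishing this last step: that the doubling operation genuinely promotes the depth-one foliation and suspension flow on $\M_f$ to a fibration and circular pseudo-Anosov flow on $N$ almost transverse to $\Sigma$. This will require a careful analysis of how the end-periodic structure of $f$ and its Handel-Miller laminations interact with the compactification boundary and its reflection, leveraging the structural results for end-periodic mapping tori from \cite{LMT23, FKeLL23, FKLL23} together with the classical theory of doubling depth-one foliations along compact leaves (cf.\ Cantwell-Conlon). Once the fibration and flow are in place, circularity follows from the $S^1$-valued fibration itself, and the theorem is proved.
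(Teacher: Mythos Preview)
Your proposal has a genuine gap at exactly the point you flag as ``the main obstacle,'' and the paper's proof shows that this obstacle is not merely technical.

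The trivial mirror double $N=\M_f\cup_{\id}\M_f$ along totally geodesic boundary is indeed hyperbolic (acylindricity of $\M_f$ rules out essential tori), and $\partial\M_f$ sits inside as a totally geodesic surface. But there is no reason for this $N$ to be fibered. The depth-one foliation $\mathcal{F}$ on $\M_f$ has $\partial\M_f$ as its \emph{depth-zero} (compact) leaves, while the $S_t$ are the depth-one leaves spiraling toward them; you have these reversed. After mirror-doubling, the spiraling on the two sides is in opposite senses, so the $S_t$ leaves do not glue to closed fibers: $\partial\M_f$ remains a compact leaf of a depth-one foliation on $N$, not a fiber, and no fibration is produced. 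Your sketch of ``gluing the vertical flows'' to obtain a pseudo-Anosov flow fails for the same reason.

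The paper resolves this by \emph{not} using the trivial double. It uses the $h$-double of Landry--Minsky--Taylor, where $h:\partial M\to\partial M$ reverses the orientation of the juncture multicurve $\partial C$; this is precisely what forces the doubled surface $\hat F=C\cup_h C^\downarrow$ to be a genuine fiber (\Cref{lmtfiber}, \Cref{ourspf}), and what produces the circular pseudo-Anosov flow almost transverse to $\partial M$. The delicate point is arranging $h$ to be simultaneously an \emph{isometry} of the totally geodesic boundary metric $s^*$, so that the double inherits a hyperbolic metric in which $\partial M$ stays totally geodesic. The paper achieves this only in genus $2$, exploiting that the hyperelliptic involution acts trivially on Teichm\"uller space (\Cref{case-i}); for $g>2$ it does not construct $h$ directly on a genus-$g$ boundary, but instead passes to the $(g-1)$-fold cover $\tilde M\to M$ corresponding to $f^{g-1}$, lifts the genus-$2$ isometry $h$ to an isometry $\tilde h$ of the genus-$g$ boundary, and forms $\tilde N=N(\tilde M,\tilde h)$ covering $N$. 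The short curve and fibration both persist in the cover.

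In short: replacing the identity gluing by a hyperelliptic isometry in genus $2$, and then lifting to covers, is the missing idea.
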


\subsection{Motivation from the finite-type setting}

\Cref{bdrystructures} is inspired by work of Fujii-Soma \cite{FS97} who show that $\mathcal{R}(\Sigma)$, the space of totally geodesic boundary structures represented by acylindrical manifolds with boundary homeomorphic to $\Sigma$, is dense in $\mathcal{M}(\Sigma)$. We also take inspiration from work of Kent \cite{K05} who conjectures that such a statement is true for the collection of knot complements with totally geodesic boundary. 

Theorems A and B are directly inspired by \cite[Theorem B]{M00} of Minsky which formed one half of his \textit{Bounded Geometry Theorem}. The theorem was developed surrounding the program to resolve the \textit{Ending Lamination Conjecture} (now the \textit{Ending Lamination Theorem} \cite{BCM12}). The Bounded Geometry Theorem establishes a combinatorial condition using the manifold's ending laminations to detect when a Kleinian surface group has bounded injectivity radius, a setting in which Ending Lamination Conjecture was proven \cite{M94}.

As reflected in the statement of \Cref{thma}, Minsky's theorem relates the subsurface projection distance $d_Y(\lambda^+,\lambda^-)$ between the stable and unstable laminations of a pseudo-Anosov homeomorphism to the length of the geodesic representative of $\partial Y$ in the hyperbolic mapping torus of the pseudo-Anosov homeomorphism. In this setting, $\lambda^+$, $\lambda^-$ are precisely the ending laminations of the cover corresponding to the given fiber of the hyperbolic manifold, we rephrase the theorem in the following setting \footnote{This follows from Theorem B of \cite{M00} by considering the $(\tilde N_\infty, s)$, the infinite-cyclic cover of $N_\phi$ corresponding to the fiber, with its hyperbolic metric $s$ induced by the cover which isometrically immerses onto $N_\phi$. In this setting, the ending laminations $\nu_\pm$ of $(\tilde M_\infty, s)$ are precisely the stable and unstable laminations $\lambda^\pm$. By an observation of Minsky, there exists an $L_0 >0$ for which the collection of curves $C_0(s, L_0) \subset \mathcal{N}_1(\pi_Y(\lambda^+) \cup \pi_Y(\lambda^-))$. In other words, $d_Y(\lambda^+, \lambda^-) \asymp \diam_{\mathcal{C}(F)}(C_0(s, L_0))$, up to a negligent constant.}:

\begin{theorem}[Minsky \label{minsky}\cite{M00}]
	Given $F$ a closed, connected genus-$g$ surface and $\varepsilon > 0$, there exists $K=K(g,\epsilon)>0$ so that if $N$ is a hyperbolic manifold fibered by $F$ with its pseudo-Anosov monodromy $\phi$ with its unstable and stable laminations $\lambda^+, \lambda^-$, and $Y$ is a proper, essential subsurface of $F$, then: 
	$$d_Y (\lambda^+,\lambda^-) \geq K \, \implies\, \ell_{N} (\partial Y) \leq \varepsilon.$$
\end{theorem}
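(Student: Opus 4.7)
The plan is to work in the infinite cyclic cover $\tilde N_\infty \to N_\phi$ corresponding to the fiber, which is a doubly degenerate Kleinian surface group for $\pi_1(F)$ whose two ending laminations are precisely $\lambda^+$ and $\lambda^-$. The point of passing to this cover is that $\tilde N_\infty$ carries a canonical family of pleated surfaces marching out toward each end, and a short-curve statement upstairs projects to a short-curve statement in $N_\phi$, since the covering is a local isometry.

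First I would fix a sequence of simple closed curves $\gamma_n$ on $F$ that converges to $\lambda^+$ at one end and to $\lambda^-$ at the other (e.g.\ iterates of a curve under $\phi^{\pm n}$). For each $\gamma_n$ that has a geodesic realization, produce a pleated surface $g_n : F \to \tilde N_\infty$ whose pleating locus contains $\gamma_n$. Let $\mu_n$ denote the pleating lamination of $g_n$. The key continuity input is the classical lemma (Thurston, Minsky) that consecutive pleated surfaces $g_n, g_{n+1}$ have pleating laminations whose subsurface projections to any essential $Y \subset F$ differ by at most a uniform constant $B = B(\chi(F))$ in $\mathcal{AC}(Y)$; this is because either $Y$ is disjoint from an efficient interpolating homotopy, or the two laminations can be simultaneously realized with bounded geometry on $Y$.

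Now assume $d_Y(\lambda^+, \lambda^-) \geq K$. Since $\pi_Y(\mu_n) \to \pi_Y(\lambda^-)$ for $n \to -\infty$ and $\pi_Y(\mu_n) \to \pi_Y(\lambda^+)$ for $n \to +\infty$, the sequence $\pi_Y(\mu_n)$ must travel a distance $\gtrsim K$ in $\mathcal{AC}(Y)$, taking bounded jumps of size $B$. Hence there are at least $\sim K/B$ consecutive indices $n$ for which $\pi_Y(\mu_n)$ lies in a bounded neighborhood of $\pi_Y(\partial Y)$; equivalently, $\mu_n$ has bounded intersection number with $\partial Y$ inside $Y$. For each such $n$, Thurston's efficiency (the bounded-diameter lemma for pleated surfaces) realizes $\partial Y$ by a curve of length $\leq L_0 = L_0(\chi(F))$ on $g_n(F)$, so the geodesic representative $\gamma^*$ of $\partial Y$ in $\tilde N_\infty$ passes within bounded distance of each such $g_n(F)$.

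The last step is the Margulis-tube argument. The pleated surfaces $g_n$ are uniformly spaced along the $\mathbb{Z}$-direction of $\tilde N_\infty$, so the geodesic $\gamma^*$ of $\partial Y$ accompanies a long arc of bounded-length realizations, i.e.\ it lies in a long, thin tube whose core must have short translation length. Quantitatively, if $\gamma^*$ had length $> \varepsilon$ in $\tilde N_\infty$, then the collar of $\gamma^*$ provided by the Margulis lemma would have bounded radius, which would cap the number of pleated surfaces intersecting a uniform neighborhood of $\gamma^*$ — contradicting the $\sim K/B$ pleated surfaces produced above once $K$ is chosen sufficiently large in terms of $\varepsilon$ and $g$. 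Projecting back from $\tilde N_\infty$ to $N_\phi$ gives $\ell_{N_\phi}(\partial Y) \leq \varepsilon$. The main obstacle is step three: making precise the quantitative ``bounded jump'' estimate $d_Y(\mu_n, \mu_{n+1}) \leq B$ between consecutive pleated surfaces, and then extracting from it a $\gamma^*$-collar whose depth grows linearly in $K$ — this is the technical heart of \cite{M00} and relies on carefully controlling the geometry of the interpolating homotopies between pleated surfaces.
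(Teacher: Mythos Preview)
The paper does not actually prove this statement: it is quoted as a theorem of Minsky, with only a footnote explaining the reduction from \cite[Theorem~B]{M00}. That footnote already contains your first step --- passing to the infinite cyclic cover $\tilde N_\infty$ of $N_\phi$ corresponding to the fiber, whose ending laminations are $\lambda^\pm$ --- and then notes that Minsky's result gives a constant $L_0$ for which the set $C_0(s,L_0)$ of curves with $L_0$-bounded length lies in a $1$-neighborhood of $\pi_Y(\lambda^+)\cup\pi_Y(\lambda^-)$, so that $d_Y(\lambda^+,\lambda^-)$ is coarsely $\diam_{\mathcal{C}(Y)}(C_0(s,L_0))$.

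Your proposal goes further and sketches the actual mechanism inside \cite{M00}: the pleated-surface interpolation, the bounded-jump estimate $d_Y(\mu_n,\mu_{n+1})\leq B$, and the Margulis-tube collar argument forcing $\partial Y$ short. That is a fair outline of Minsky's proof, and you correctly identify the bounded-jump estimate as the technical heart. One caution: your phrasing ``pleated surfaces are uniformly spaced along the $\Z$-direction'' and ``long arc of bounded-length realizations'' is heuristic; what Minsky actually proves is a quantitative version of Thurston's Uniform Injectivity and Efficiency of Pleated Surfaces, from which the length bound on $\partial Y$ is extracted. But since the present paper only \emph{uses} this theorem as a black box, your sketch is more than the paper itself provides.
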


We use this theorem both as motivation and a tool to extend its result to the setting of infinite-type surfaces and their atoroidal end-periodic homemomorphisms. The constant $K$ in \Cref{minsky} depends only on $\epsilon$ and the topological complexity of $F$; i.e. for a fixed $\epsilon$ and fixed genus, the constant is independent of the choice of pseudo-Anosov monodromy, and hence the hyperbolic metric on $N$. Rather than depending on any notion of ``complexity" of the surface $S$, the constant $K$ in \Cref{thma}, and \Cref{thmb} depend on the aforementioned capacity genus of $f$.

\subsection*{Acknowledgements.}
The author thanks their advisor, Sam Taylor, for introducing them to this fascinating story of depth-one foliations and spun pseudo-Anosov maps, for suggesting the main problem and for his guidance and patience during the research and writing process. 

\section{Background, terminology and notation}\label{bg}

\subsection{Surfaces and subsurfaces}
In this subsection, let $X$ be a hyperbolic surface with finitely many ends, all non-planar. We will discuss the underlying topology of the surface. All surfaces in this work are orientable. When $X$ is of finite type,  we let $g(X)$ denote its number of genus, $b(X)$ its number of boundary components. We let $\chi(X)= 2-2g(X) - b(X)$ be its Euler characteristic, and $\xi(X)=3-3g(X)-b(X)$ its complexity.  When the surface is disconnected, its Euler characteristic and complexity are simply the sums of the Euler characteristics and complexities of each component. All finite-type surfaces within this work satisfy $\chi(X)\leq 0$, so that for any subsurface $Y\subset X$, $|\chi(Y)| \leq |\chi(X)|$. 

We consider isotopy classes of essential, simple, closed loops and proper isotopy classes of essential, properly embedded, simple paths on $X$. Recall that an path or loop is \emph{simple} if it is an embedding of the circle or closed interval, and \textit{essential} if it is non-peripheral and non-nullhomotopic in the surface. We use the terms \textit{arcs} and \textit{curves} to refer the proper isotopy classes of essential paths and loops, and reserve the terms \emph{paths} and \emph{loops} for particular embedding of the closed interval or circle. We refer to a disjoint union of loops as a \textit{multiloop}, and a disjoint union of distinct curves as a \emph{multicurve}.
  
Let $\mathcal{AC}(X)$ denote the \textit{arc and curve graph} of $X$, the simplicial graph whose vertices consist of arcs and curves of $X$, with edges between vertices whose representatives may be realized disjointly on the surface. The graph $\mathcal{AC}(X)$ is a metric space endowed with the path metric $d_{\mathcal{AC}(X)}$. The \textit{curve graph} $\mathcal{C}(X)$ is the induced subgraph of $\mathcal{AC}(X)$ whose vertices are all curves of $X$. When $X$ is of finite-type both $\mathcal{AC}(X)$ and $\mathcal{C}(X)$ have infinite-diameter, and are quasi-isometric to one another (see \cite{KP10}). We refer the reader to \cite{MM00}, \cite{S05} for more information about this graph and its (coarse) geometry.

Given a compact annulus $Y$, the definition of $\mathcal{AC}(Y)$ requires more care. We define $\mathcal{AC}(Y)$ as the graph whose vertices are essential arcs from one end of $Y$ to the other, modulo isotopies which \emph{fix the endpoints} in $\partial Y$, again with edges between vertices which admit disjoint representatives.

We say a subset $Y\subset X$ is an \textit{essential subsurface} if it is a connected, compact surface, not homeomorphic to a pair of pants, and further require component of $\partial Y$ to represent an essential curve. Mapping classes of $Y$ are represented by elements of $\Homeo(Y, \partial Y)$, the set of homeomorphisms which restrict to the identity on $\partial Y$. We naturally embed $\Map(Y) \leq \Map(X)$ by extending a representative of mapping class of $Y$ to be the identity on $X-Y$, yielding a homeomorphism of $X$ supported on $Y$. We will refer to a mapping class $g\in \Map(Y)$ as \textit{fully supported} if $g$ acts loxodromically on $\mathcal{AC}(Y)$.  When $Y$ is an annulus, each nontrivial element (i.e. some power of a Dehn twist) of $\Map(Y)$ is fully supported. For non-annular surfaces, fully supported mapping classes, or their representatives, are called \textit{pseudo-Anosov}. In fact, Masur-Minsky prove the following:
\begin{theorem}[Minimal translation length \cite{MM99}]\label{mintrans}
	Let $Y$ be a compact, connected surface with $\chi(Y) \leq 0$. Then, there exists a constant $c=c(Y)$, depending only on the topological type of $Y$, so that for any fully supported $g\in \Map(Y)$ we have: 
		\[ c|n| \leq d_{\mathcal{AC}(Y)}(\alpha, g^n (\alpha)).\]	
\end{theorem}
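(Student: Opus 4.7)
The plan is to split into the annular and non-annular cases. When $Y$ is a compact annulus, every fully supported mapping class is a nonzero power $T^k$ of the Dehn twist $T$ about the core. A direct computation in $\mathcal{AC}(Y)$—recalling that its vertices are arcs modulo isotopy \emph{fixing} endpoints—shows that $d_{\mathcal{AC}(Y)}(\alpha, T^{kn}\alpha)$ equals $|kn|$ up to an additive constant, so one may take $c = 1/2$ in this case.

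For the non-annular case, full support means precisely that $g$ admits a pseudo-Anosov representative preserving a pair of transverse minimal filling measured laminations $\lambda^\pm$ with dilatation $\lambda(g) > 1$. I would then combine two ingredients. The first is the Masur–Minsky logarithmic bound $d_{\mathcal{C}(Y)}(\alpha,\beta) \leq 2\log_2 i(\alpha,\beta) + 2$, whose constants are universal. The second is the fact that for any essential curve $\alpha$, the intersection numbers grow exponentially: $i(\alpha, g^n\alpha) \geq C(\alpha)\cdot \lambda(g)^{|n|}$ for $|n|$ large, which follows from the action of $g$ on the projective space of transverse measures supported on an invariant train track. Rearranging gives the linear lower bound $d_{\mathcal{AC}(Y)}(\alpha, g^n\alpha) \geq \tfrac{|n|}{2}\log_2 \lambda(g) - O(1)$ for $|n|$ large.

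The remaining step is to replace $\log \lambda(g)$ by a positive quantity depending only on the topological type of $Y$. This is classical: by Penner's theorem and its extensions, the infimum of pseudo-Anosov dilatations on a surface of fixed topological type is strictly greater than $1$, so $\log \lambda(g) \geq \log \lambda_{\min}(Y) =: c_0(Y) > 0$. Setting $c$ to be a small enough multiple of $c_0(Y)$, and then shrinking $c$ further if necessary so that the bound also holds for small $|n|$ by using $d_{\mathcal{AC}(Y)}(\alpha, g^n\alpha)\geq 1$ whenever $n\neq 0$ (which uses only that $g$ is loxodromic), closes the argument.

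The central obstacle is uniformity: the multiplicative constant in the main inequality must depend neither on the base curve $\alpha$ nor on the particular $g$. Both the Masur–Minsky upper bound and Penner's lower bound are topological, but the intersection-growth estimate a priori depends on $\alpha$ through $C(\alpha)$. The cleanest remedy is to first prove the statement for the \emph{asymptotic} translation length $\tau(g) := \lim_{n\to\infty} d_{\mathcal{AC}(Y)}(\alpha, g^n\alpha)/|n|$, which is independent of $\alpha$, and then pass to an all-$n$ inequality by absorbing the additive slack into a slightly smaller choice of $c$.
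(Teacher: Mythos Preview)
The paper does not prove this theorem; it is quoted from Masur--Minsky \cite{MM99} and used as a black box. So there is no ``paper's own proof'' to compare against, and your attempt must stand on its own.

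Unfortunately there is a genuine gap in the non-annular case. The Hempel/Masur--Minsky inequality you invoke,
\[
d_{\mathcal{C}(Y)}(\alpha,\beta) \leq 2\log_2 i(\alpha,\beta) + 2,
\]
is an \emph{upper} bound on curve-graph distance in terms of intersection number. Combining it with the \emph{lower} bound $i(\alpha, g^n\alpha) \geq C(\alpha)\,\lambda(g)^{|n|}$ yields only
\[
d_{\mathcal{AC}(Y)}(\alpha, g^n\alpha) \leq 2|n|\log_2 \lambda(g) + O(1),
\]
which is the wrong direction. There is no converse inequality of the form $d \geq c\log i$: curves at distance~$2$ can have arbitrarily large intersection number. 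So neither the intersection-growth estimate nor Penner's lower bound on dilatation can be leveraged this way; indeed, curve-complex translation length and $\log\lambda(g)$ are not uniformly comparable in the direction you need.

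The actual argument in \cite{MM99} is of a different nature. Masur and Minsky project a Teichm\"uller geodesic (along which the pseudo-Anosov translates by $\log\lambda(g)$) to the curve complex via the systole map, and show---as part of their proof of hyperbolicity---that this projection makes \emph{definite progress}: there is a constant depending only on $Y$ so that any unit of Teichm\"uller distance contributes at least a fixed amount of curve-complex distance. An alternative route, also due to them, uses sequences of nested train tracks carrying iterates of a curve and bounds the length of such a nesting sequence below by curve-complex distance. Either approach produces the uniform constant $c(Y)$ directly, without passing through dilatation bounds. Your annular case is fine; for the non-annular case you would need to replace the intersection-number step entirely with one of these mechanisms.
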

Let $\tau_Y(g)$ denote the\textit{ stable translation length} of $g$ on $\mathcal{AC}(Y)$, i.e.:\[\tau_Y(g) := \lim_{n\to\infty}\frac{d_{\mathcal{AC}(Y)}(\alpha, g^n(\alpha))}{n}\]

It follows from the theorem above that when $g\in\Map(Y)$ is fully supported, $\tau_Y(g)\geq c$. We also note that $\tau_Y(g) \leq d_Y(\alpha, g(\alpha))$ for all $\alpha \in \AC(Y)$. 

\subsection{Subsurface projections}
Following \cite{MM00}, given an essential $Y\subset X$, we define the \textit{subsurface projection map} to $\mathcal{P}(\mathcal{AC}(Y))$, the power set of $\mathcal{AC}(Y)$, $$\pi_Y: \mathcal{AC}(X) \to \mathcal{P}(\mathcal{AC}(Y))$$ by the following:

Consider the cover $p:{\tilde X_Y}\to X$ corresponding to the subgroup $\pi_1(Y)\leq \pi_1(X)$ and its compactification $X_Y= \bar{\H^2} / \pi_1(Y)$ homeomorphic to $Y$. Given $\alpha \in \mathcal{AC}(X)$, $p^{-1}(\alpha) \subset {Y}$ lifts to a disjoint collection of paths and loops, and we define $\pi_Y(\alpha)$ to be the collection of components of $p^{-1}(\alpha)$ which represent (essential) arcs and curves of $X_Y$. Whenever the projection is non-empty, $\diam_{\mathcal{AC}(Y)}(\pi(\alpha)) \leq 1$, providing a coarsely well-defined map from $\mathcal{AC}(X) \to \mathcal{AC}(Y)\cup \{\emptyset\}$. Note that we may realize $\alpha$ and $\partial Y$ in minimal position by taking the geodesic representatives of the homotopy classes of $\alpha$ and $\partial Y$ (see \Cref{fig:subprojection}).  When $Y$ is a non-annular subsurface of $X$, this produces a subsurface $Y^*$ with totally geodesic boundary $\partial Y^*$. In this setting, $\pi_Y(\alpha)$ is the set of vertices in $\mathcal{AC}(Y)$ comprising of each component of $Y^* \cap \alpha^*$ which represents an essential curve or arc in $Y^*$.
\begin{figure}[h]
		\includegraphics[width=.75\textwidth]{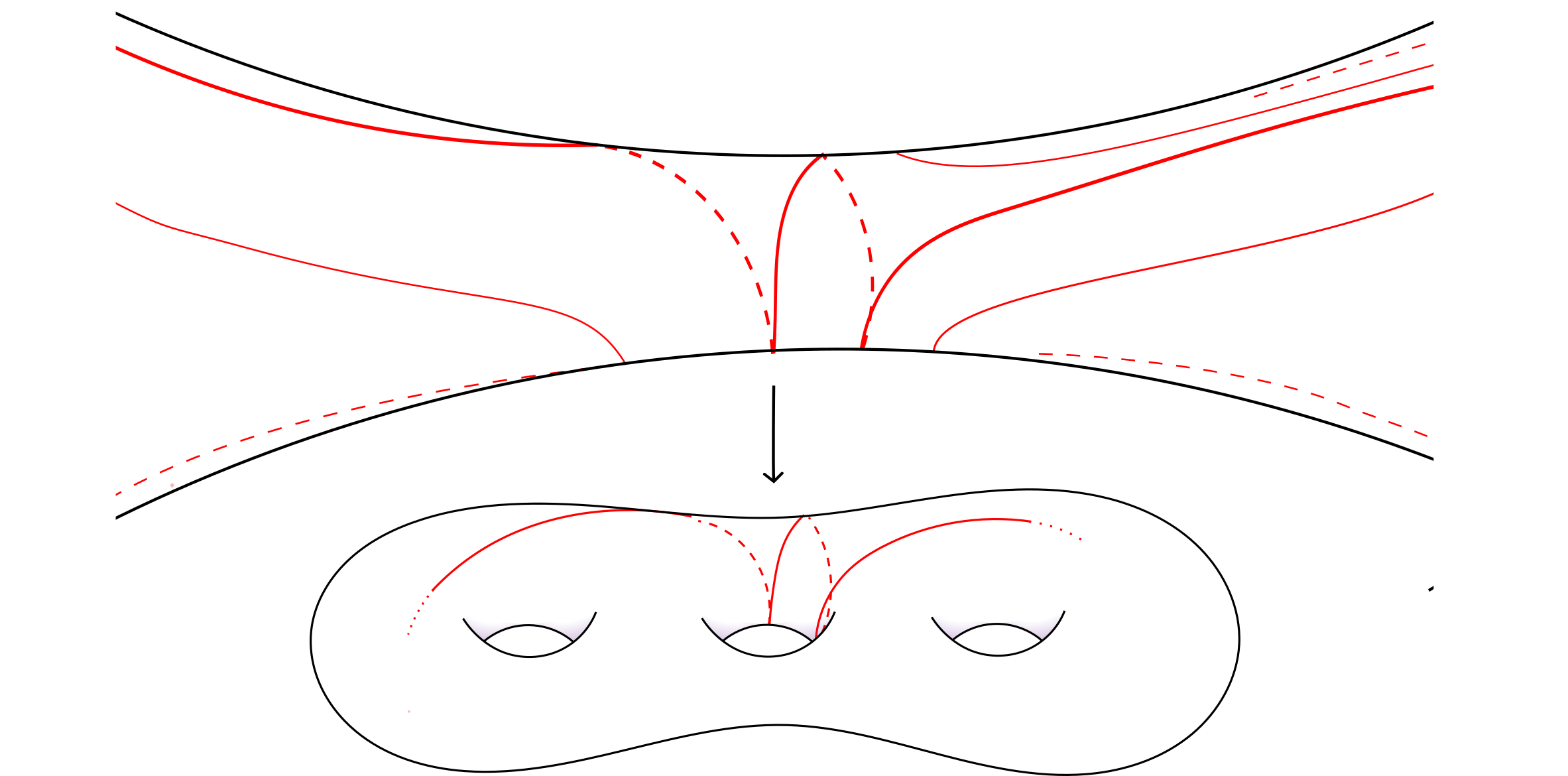}
		\caption{The lift of a path of $X$ to an annular cover $p: \tilde X_Y \to X$. While the preimage of the path consists of many arcs, only one represents a vertex of $\mathcal{AC}(Y)$.\label{fig:subprojection}}
\end{figure}

We will say that a subset $A\subset \mathcal{AC}(S)$ is \emph{disjoint} from $Y$ if $\pi_Y(A)=\emptyset$, and that it \emph{meets} $Y$ (equivalently, that $Y$ meets $A$) otherwise. If $A$ is disjoint from $Y$ we will typically choose representative loops and paths which realize it as so. 

Given a subsurface and subsets $A$, $B \subset \mathcal{AC}(S)$, both meeting $Y$, we define their \textit{subsurface projection distance to $Y$}: 
$$ d_Y(A,B) := \text{diam}_{\mathcal{AC}(Y)}(\pi_Y(A) \cup \pi_Y(B)).$$ 

For convenience, if either $\pi_Y(A)$ or $\pi_Y(B)$ is empty, we declare $d_Y(A,B)=0$.
We'll list a few observations about subsurface projection distances which will be helpful for calculations. 
\begin{remark}\label{projcalculations}
	Let $A, A_i$ be subsets of $\mathcal{AC}(Y)$ with $\pi_Y(A), \pi_Y(A_i) \neq \emptyset$.  
	\begin{enumeri}
		\item $\displaystyle d_Y(A_1,A_n) \leq \sum_{1\leq i \leq n}d_Y(A_i, A_{i+1})$
		\item For any $h \in \Map(S)$, $d_Y(A_1,A_2) = d_{h(Y)}(h(A_1),h(A_2))$. So, if $h$ is supported on $Y$, then $d_Y(A_1,A_2) = d_{Y}(h(A_1),h(A_2))$. 
		\item  For any subset $A \subset \mathcal{AC}(Y)$, 
		\[\tau_Y(h)\leq d_Y(A, h(A))  \]
		\item For subsets $A'_i\subset A_i$,
		\[ d_Y(A'_1, A'_2) \leq d_Y(A_1, A_2)\]
		\item If there exists a subsurface $W$ for which $\pi_W(A)$, $\pi_W(\partial Y) \neq \emptyset$ and $d_W(A, \partial Y)\geq 2$, then $\pi_Y(A) \neq \emptyset$.  
	\end{enumeri}
\end{remark}

It's important to note that $\pi_Y(\alpha) \neq \emptyset$ does not imply $\pi_Y(h(\alpha)) \neq \emptyset$. In light of this, we will apply (4) to ensure the projections are non-empty during such calculations.

\subsection{Foliations and laminations of surfaces}\label{laminatioNs}
A \emph{geodesic lamination} $\Lambda$ of $X$ is a closed subset of $X$ which is the union of complete, simple geodesics known as \emph{leaves} of the geodesic lamination. We are mostly concerned with the underlying topology of $\Lambda$ rather than any local structure, so we will simply refer to a closed subset as a lamination if it is topologically equivalent to a geodesic lamination. 

Given $\mathcal{G}$, a disjoint union of complete geodesics on $X$, we let $\bar{\mathcal{G}}$ denote its closure as a subset in $X$; we note that $\bar{\mathcal{G}}$ is (the support of) a geodesic lamination of $X$ (see \cite[Lemma 3.2]{CB88}).

Since a given leaf of a lamination is not necessarily a proper embedding of $\R \hookrightarrow X$, it may not be appropriate to denote it as a vertex of the arc and curve graph of the surface. However, we may still define $\pi_Y(\Lambda^\pm)$ by again lifting the lamination to $\tilde\Lambda_Y \subset X_Y$, so that $\pi_Y(\Lambda)$ denotes the (possibly empty) collection of vertices of $\mathcal{AC}(Y)$ which represent (essential) arcs and curves of $\tilde\Lambda_Y$.

\subsubsection{From foliations to laminations}
A singular foliation $\mathcal{F}$ of a surface is a decomposition of $X$ into $1$-manifolds known as \textit{leaves} and a finite collection $\mathcal{P}$ of \textit{singularities} so that the collection $\mathcal{F}-{\mathcal{P}}$ forms a foliation of $X - \mathcal{P}$.

A leaf which is disjoint from the singular collection $\mathcal{P}$ is a \textit{regular} leaf, while one which meets $\mathcal{P}$ is known as \textit{singular}. For a singular foliation, a \textit{leaf-line} of $\mathcal{F}$ is either a regular leaf of $\mathcal{F}$, or bi-infinite line formed by a union of at least two singular leaves so that any pair of leaves which meet at singularity are adjacent.

A closed subset $\Lambda$ of $\mathcal{F}$ formed by the union of leaf-lines of $\mathcal{F}$ is known as a \textit{(singular) sublamination} of $\mathcal{F}$. We use the same terminology of leaf, leaf-line, and singular leaf 
to refer to the leaves of the sublamination.

A singular, Reeb-less foliation on a hyperbolic surface $X$ has a corresponding lamination, $\Lambda({\mathcal{F}}) \subset X$, obtained by the following process of ``straightening" leaf-lines of $\mathcal{F}$ \cite[Theorem 1]{Lev83}. The lifts of each leaf-line of $\mathcal{F}$ form a disjoint collection of bi-iinfinite quasi-geodesics of $\bar{\H^2}$ which defines a subset $\partial^2(\mathcal{F})$ of $\partial^2 (\H^2)$, the space of unoriented geodesics of $\H^2$. For each point $(z,w) \in \partial^2(\mathcal{F}) $, consider the unique bi-infinite geodesic representative between $(z,w)$. The union such geodesics is $\pi_1(X)$-invariant lamination of $\bar{\H^2}$ which descends to a lamination $\Lambda(\mathcal{F})$ of $X$.  

For a compact, essential subsurface $Y \subset X$, we define $\pi_Y(\mathcal{F}):= \pi_Y(\Lambda(\mathcal{F}))$. Similarly, if $\Lambda$ is a (singular) sublamination of $\mathcal{F}$, we define $\pi_Y(\Lambda)$ to be the union of $\pi_Y(\ell)$ over all leaf-lines $\ell$ of $\Lambda$.

We will later refer to singular $2$-dimensional foliations of $3$-manifolds, but only those obtained as the suspension of a singular foliation of a surface. For such singular foliations, the $1$-dimensional leaves of the surface foliation suspend to $2$-dimensional leaves, and the finite set of singularities suspend to $1$-dimensional singular orbits, i.e. loops in the $3$-manifold. 
\subsection{End-periodic homeomorphisms}\label{eph}

From this section onward, let $S$ be a boundaryless infinite-type surface with finitely many ends. We denote $\Ends(S)$ as the finite set of ends of $S$. We refer the reader to the expository work of Aramayona-Vlamis \cite{AV20} for further exposition of surfaces of this type.

A homeomorphism $f$ of $S$ is \textit{end-periodic} if for each end $\epsilon \in \text{Ends}(S)$ there exists $p\in \Z_{\geq 0}$ and a neighborhood $U_\epsilon$ of $\epsilon$ so that the end $\epsilon$ is either \textit{attracting} or \textit{repelling} in the following sense:
\begin{itemize}
	\item ($\epsilon$ is an \textit{attracting} end): $f^p(U_\epsilon) \subset U_\epsilon$ is a proper inclusion, and the collection $\{f^{pn}(U_\epsilon)\}_{n\in \Z}$ forms a neighborhood basis of $\epsilon$.
	\item ($\epsilon$ is a \textit{repelling} end): $f^{-p}(U_\epsilon) \subset U_\epsilon$ is a proper inclusion, and the collection $\{f^{pn}(U_\epsilon)\}_{n\in \Z}$ forms a neighborhood basis of $\epsilon$.
\end{itemize}
 Following terminology of Field-Kim-Leininger-Loving \cite{FKLL23}, we refer to a neighborhood $U_\epsilon$ which realizes $\epsilon$ as an attracting or repelling end as a \textit{nesting neighborhood} of $\epsilon$. The map $f$ naturally acts on $\Ends(S)$, partitioning finite set of ends so that $\Ends(S) = \Ends ^+(S) \,\sqcup\, \Ends^-(S)$. $\Ends^\pm(S)$ further decomposes into a collection of end-orbits $\mathcal{O}_1, \ldots \mathcal{O}_k$. The period of an end $\epsilon$ is cardinality of its end-orbit; or equivalently, it is the minimal $p$ for which $f^p (\epsilon)=\epsilon$. We will use $+$, $-$ to denote an attracting (respectively repelling) end, one of its neighborhoods, or a curve in a such a neighborhood.  We will typically order the ends in an attracting orbit so that $\epsilon_i =f^{i}(\epsilon_0)$, and in a repelling orbit so that $\epsilon_i =f^{-i}(\epsilon_0)$.
 
As in \cite{F97}, we organize nesting neighborhoods of $\Ends^\pm(S)$ and define a positive (resp. negative) \emph{ladder} of $f$ as a neighborhood of $\Ends^\pm(S)$ of the form \[U_+ = \bigcup_{\epsilon \in \Ends^+(S)} U_\epsilon \qquad U_- = \bigcup_{\epsilon \in \Ends^-(S)} U_\epsilon\] where $\{U_\epsilon\}_{\epsilon\in\Ends^\pm(S)}$ are pairwise disjoint, and each $U_\epsilon$ is a nesting neighborhood of $\epsilon$. We say the ladder $U_\pm$ is \emph{tight} if $f^{\pm 1}(U_\pm) \subset U_\pm$ is a proper inclusion. We use tight ladders to define the \textit{positive escaping set} and \textit{negative escaping set} of $f$ respectively:
$$\mathcal{U}_+ = \bigcup_{n\in\Z} f^{n}(U_+) \qquad \mathcal{U}_- = \bigcup_{n\in\Z} f^{n}(U_-)$$ 
We will also refer to the \textit{escaping set of an end} $\epsilon$ of period $p$ by: \[\mathcal{U}_\epsilon = \bigcup_{n\in\Z} f^{pn}(U_\epsilon),\] where $U_\epsilon$ is a connected nesting neighborhood of $\epsilon$.

From the definition of end-periodicity, one can verify that these definitions are independent of the choice of $U_\pm$ (or $U_\epsilon$). Observe that each end-periodic map admits a tight ladder--- for example, take the positive (negative) ladder consisting of the forward (backward) $f$-orbits of each nesting neighborhood for each attracting (repelling) end. As the name implies, the escaping sets form the collection of points whose positive (resp. negative) iterates under $f$ eventually land in an attracting (resp. repelling) nesting neighborhood, hence ``escape'' the end. 

We say that a loop is \textit{positively} (resp. \textit{negatively}) \textit{escaping} under $f$ if it is contained in the positive (resp. negative) escaping set of $f$. The escaping sets $\mathcal{U}_\pm$ themselves are surfaces of infinite type, thus there are uncountably many loops (representing uncountably many distinct curves) which escape into the positive or negative ends under iteration of the map $f$.

We recall work of Fenley, exposited in \cite{FKLL23}, which describes the ``end behavior" of the end-periodic homeomorphism:

\begin{lemma}{(Fenley \cite{F97}, Field-Leininger-Loving-Kim \cite{FKLL23})}\label{boundarysurfaces}. Let $f$ be an end-periodic homeomorphism. Then $f$ acts freely, properly discontinuously, and cocompactly on $\mathcal{U}_\pm$. 
	The quotient of the action $\mathcal{U}_\pm / \langle f \rangle$ defines the following surfaces
	$$ \Sigma_+ :=\mathcal{U}_+ / \langle f \rangle \quad\qquad \Sigma_- :=\mathcal{U}_- / \langle f \rangle$$
	Further, the surfaces satisfy $\chi(\Sigma_+)=\chi(\Sigma_-)$.
\end{lemma}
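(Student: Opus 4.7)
The plan is to fix tight positive and negative ladders $U_\pm$ and construct explicit compact fundamental domains $F_+ := \overline{U_+ \setminus f(U_+)}$ and $F_- := \overline{U_- \setminus f^{-1}(U_-)}$ for the $\langle f\rangle$-action on $\mathcal{U}_\pm$. Each $F_\pm$ is compact because its connected components lie in ``annular transition regions'' of the form $U_\epsilon \setminus f^{\pm p}(U_\epsilon)$ near an end $\epsilon$ of period $p$, and the nesting neighborhood basis property makes these regions have compact closure.

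With these fundamental domains in hand, the three dynamical properties follow quickly. For freeness, I would suppose $f^n(x)=x$ for some $x\in\mathcal{U}_+$ and $n>0$; choosing $k$ with $f^k(x)\in U_+$ gives $f^k(x) = f^{k+mn}(x) \in f^{mn}(U_+)$ for every $m\geq 0$, contradicting the fact that $\{f^{mn}(U_+)\}_m$ forms a neighborhood basis of the attracting ends (whose intersection contains no points of $S$). For proper discontinuity, any compact $K\subset \mathcal{U}_+$ lies in $f^{-N}(U_+)\setminus f^{N'}(U_+)$ for some $N, N'\geq 0$, so $f^n(K)\cap K=\emptyset$ whenever $|n|>N+N'$. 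Cocompactness is immediate from the tiling $\mathcal{U}_+ = \bigsqcup_{n \in \Z} f^n(F_+)$ (up to boundary identifications). The quotient $\Sigma_+$ is then the closed surface obtained from $F_+$ by identifying $\partial U_+$ with $f(\partial U_+)$ via the homeomorphism $f$.

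The Euler characteristic equality is the main step, and the plan is to exploit the compact ``core'' $K := S\setminus(U_+\cup U_-)$. After shrinking $U_\pm$ if necessary so that the four neighborhoods $U_+, f(U_+), U_-, f(U_-)$ are pairwise disjoint apart from the forced nestings $f(U_+)\subset U_+$ and $U_-\subset f(U_-)$, a direct set-theoretic computation will show
\[ K \cup F_+ \;=\; S\setminus\bigl(f(U_+)\cup U_-\bigr) \;=\; f(K) \cup f(F_-). \]
Since $\overline{K}\cap \overline{F_+}$ and $\overline{f(K)}\cap \overline{f(F_-)}$ are each disjoint unions of circles, which contribute nothing to Euler characteristic, inclusion-exclusion yields $\chi(K)+\chi(F_+)=\chi(f(K))+\chi(F_-)$. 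Using $\chi(K)=\chi(f(K))$ from the fact that $f$ is a homeomorphism, we get $\chi(F_+)=\chi(F_-)$. Finally, because gluing paired boundary circles via a homeomorphism preserves Euler characteristic, $\chi(\Sigma_\pm)=\chi(F_\pm)$, and the desired equality follows.

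The main technical obstacle will be arranging the disjointness conditions on the two ladders and their $f$-images so that the set-theoretic identity above is clean, and then verifying that each boundary component of $F_\pm$ is glued to exactly one matching component in the quotient. This is essentially a bookkeeping argument and does not engage the dynamics of $f$ beyond the definition of end-periodicity itself.
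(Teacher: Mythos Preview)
The paper does not prove this lemma; it is stated with attribution to Fenley \cite{F97} and Field--Kim--Leininger--Loving \cite{FKLL23} and then used without further argument. So there is no ``paper's own proof'' to compare against.

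Your proposal is correct and is essentially the standard argument one would give. The fundamental-domain construction $F_\pm = \overline{U_\pm \setminus f^{\pm 1}(U_\pm)}$ is exactly right, and your verification of freeness, proper discontinuity, and cocompactness is sound. The Euler-characteristic step via the set identity $K \cup F_+ = S \setminus (f(U_+)\cup U_-) = f(K)\cup f(F_-)$ is clean and correct once the disjointness you flag is arranged; that arrangement is indeed routine (replace $U_+$ by $f^k(U_+)$ for $k$ large enough that it misses the fixed compact set $\overline{f(U_-)\setminus U_-}$). One cosmetic point: the transition regions $U_\epsilon \setminus f^{p}(U_\epsilon)$ are not generally annular---they can carry genus, as the paper's figures suggest---so you should drop that adjective, though it plays no role in the compactness argument. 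Otherwise the sketch is complete.
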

Observe that the components of $\Sigma = \Sigma_+ \cup \Sigma_-$ are in bijection with the end-orbits of $f$. 

\subsubsection{Types of end-periodic homeomorphisms}
A loop or line $\alpha$ is \textit{periodic} under $f$ if $\exists m\neq 0$ s.t. $f^m (\alpha)$ is properly isotopic to $\alpha$. We say a loop or line $\alpha$ is \textit{reducing} $\exists m, n \in \Z$ s.t. $f^n(\alpha)\subset\mathcal{U}_+$, and $f^m(\alpha)\subset \mathcal{U}_-$,  in other words, if $\alpha$ is both positive and negative escaping. We say a curve or arc is periodic or reducible if one of its representatives is.

We say an end-periodic homeomorphism is \textit{atoroidal} if it admits no periodic curves. As the name implies, an atoroidal end-periodic homeomorphism yields an atoroidal mapping torus $M_f$ (see \Cref{fkllhyp}). Field-Kim-Leininger-Loving introduce \emph{strongly irreducible} end-periodic homeomorphisms, which are atoroidal end-perioidic homeomorphisms which admit no periodic arcs or reducing curves. We direct the reader to \Cref{examples} for more examples of end-periodic homeomorphisms, including Field-Kim-Leininger-Loving's construction of strongly irreducible end-periodic homeomorphisms.

\subsubsection{Juncture orbits}
We say that an essential multiloop of $\mathcal{U}_+$ (resp. $\mathcal{U}_-$) is a positive (resp. negative) \textit{juncture} if it is the boundary of a tight
positive (resp. negative) ladder. It's worth noting that a multiloop does not necessarily bound a tight ladder of a given homeomorphism, even if one of its friends in its isotopy class does. In light of this, we emphasize that we are typically concerned with the genuine representative loop rather than its isotopy class.  

Fix $j_+$ and $j_-$, a positive and negative juncture of $f$. We define the \emph{positive juncture orbit} $J^+$, and \emph{negative juncture orbit} $J^-$ (with respect to $j_+$ and $j_-$, respectively) as the multiloops
$$J^+ = \bigcup_{k\in\Z} f^{k}({j_+}) \qquad J^- = \bigcup_{k\in\Z} f^{k}({j_-}).$$ To verify that these unions are indeed multiloops, consider the tight ladders $U_\pm$ bounded by $j_\pm$. It follows that $\forall k\in \Z_{\geq 0}, f^k(U_\pm) \subset U_\pm $ is a proper inclusion, which implies that each component of $f^i(j_\pm)$ is either disjoint from, or coincides with some component of $f^k(j_\pm)$ for all $i,k$ $\in \Z$. 

\begin{remark}
	Consider the partial juncture orbits
	\begin{align*}
		 \jpp &= \bigcup_{i\geq k} f^i(j_+)  & \jpm &= \bigcup_{i\leq k} f^i(j_+)\\ 
		 \jmm &= \bigcup_{i\leq k} f^i(j_-)   & \jmp &= \bigcup_{i\geq k} f^i(j_-).
	\end{align*}
	
	The partial juncture orbits $\jpp$ and $\jmm$ are both closed subsets of $S$. 
\end{remark}

While the juncture orbits are somewhat uninteresting in the ends of $S$, they may have more interesting behavior when considering their closure in the entire surface.

\subsection{Handel-Miller laminations}\label{HMlam}
Fix a hyperbolic metric $X$ on $S$, and let $J_+$, $J_-$ be a pair of positive and negative juncture orbits for $f$. 
Let $\mathcal{J}_\pm$ be the union of tightened geodesics for each curve in $J_\pm$. Since $J_\pm$ is an multiloop, ${\mathcal{J}^\pm}$ is foliated by geodesics and defines a geodesic lamination $\bar{\mathcal{J}^\pm}$. The \emph{Handel-Miller laminations} \cite[Section 4]{CCF21} are the geodesic laminations defined as 
$$\Lambda^+ = \bar{\mathcal{J}^-}- {\mathcal{J}^-}$$ 
$$\Lambda^- = \bar{\mathcal{J}^+}- {\mathcal{J}^+}$$ 

It will be helpful to consider the Hausdorff distance between the junctures and the respective laminations. Let $\mathcal{J}^+_k$ denote the geodesic representative of the multiloop  $\jpm$, and $\mathcal{J}^-_{k}$ denote the geodesic representative of $\jmp$. Then for every $\varepsilon > 0$, there exists $K$ for which: 
$\mathcal{J}^+_k \subset N_\varepsilon (\Lambda^-)$ and $\Lambda^- \subset N_\varepsilon(\mathcal{J}^-_k)$ for all $k\leq K$ (likewise,  $\mathcal{J}^-_k \subset N_\epsilon (\Lambda^+)$ and $\Lambda^+ \subset N_\varepsilon(\mathcal{J}^-_k)$ for all $k\geq K$).  In this sense, the negative iterates of a positive juncture limit to the negative Handel-Miller lamination, and the positive iterates of a negative juncture orbit limits to the positive Handel-Miller lamination.

\begin{figure}[h]
	\includegraphics[width=.85\textwidth]{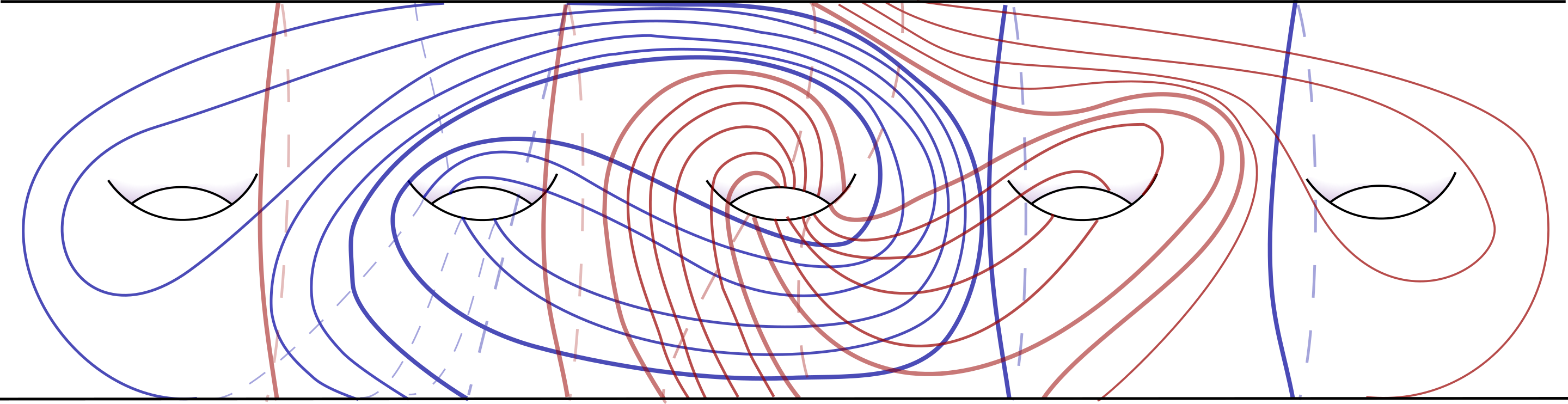}
	\caption{A few components of $J^-$ (in red) and $J^+$(in blue) of an atoroidal end-periodic homeomorphism. One imagines interesting behavior in the limit of the geodesics.}
\end{figure}

\begin{observation}\label{disjoint:lamjunc} 
	For any positive juncture orbit, its geodesic multicurve $\mathcal{J}^+$ is disjoint from the negative Handel-Miller lamination $\Lambda^-$; likewise, any for negative juncture orbit, its geodesic multicurve $\mathcal{J}^+$ is disjoint from the positive Handel-Miller lamination $\Lambda^+$.		
\end{observation}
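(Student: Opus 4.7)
The plan is to extract the disjointness directly from the construction of $\Lambda^-$ together with the defining property of a geodesic lamination, namely that distinct leaves are pairwise disjoint.

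First I would handle the case where $J^+$ is the positive juncture orbit originally used to define $\Lambda^-$. By construction, $\overline{\mathcal{J}^+}$ is a geodesic lamination, so any two of its leaves are disjoint. The tightened simple closed geodesic representatives that comprise $\mathcal{J}^+$ are themselves closed leaves of $\overline{\mathcal{J}^+}$. By definition, the remaining (non-closed) leaves constitute $\Lambda^- = \overline{\mathcal{J}^+} - \mathcal{J}^+$, and hence are disjoint from $\mathcal{J}^+$. This immediately gives $\mathcal{J}^+ \cap \Lambda^- = \varnothing$.

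Next, to upgrade the statement to an arbitrary positive juncture orbit $J'^+$ with geodesic multicurve $\mathcal{J}'^+$, I would invoke the standard fact from Handel--Miller theory (as developed in e.g.\ \cite{CCF21}) that $\Lambda^\pm$ does not depend on the choice of juncture orbit used in its construction. Once this invariance is in hand, we may also express $\Lambda^- = \overline{\mathcal{J}'^+} - \mathcal{J}'^+$, and the previous paragraph applies verbatim to yield $\mathcal{J}'^+ \cap \Lambda^- = \varnothing$. The statement for a negative juncture orbit and $\Lambda^+$ follows by exchanging the roles of $+$ and $-$ throughout.

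There is no real obstacle. The content of the argument amounts to (i) the elementary observation that the leaves of a geodesic lamination are pairwise disjoint and (ii) invariance of the Handel--Miller laminations under the choice of juncture orbit, which is standard. If one wished to avoid quoting (ii) as a black box, the alternative is to compare two positive juncture orbits $J^+$ and $J'^+$ directly: since both bound tight positive ladders, iterating $f$ nests the loops of one orbit inside those of the other, so both limit onto the same set of recurrent geodesics, giving the required identification of their limit leaves with $\Lambda^-$.
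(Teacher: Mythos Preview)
Your argument is correct and matches what the paper intends: the observation is stated without proof because it follows immediately from the definition $\Lambda^- = \overline{\mathcal{J}^+} - \mathcal{J}^+$ (set-theoretic difference, so disjointness is automatic) together with the invariance of $\Lambda^\pm$ under change of juncture orbit, which the paper records from \cite[Corollary 4.72]{CCF21}. Your phrasing via ``leaves of a geodesic lamination are pairwise disjoint'' is a slight detour---the set difference already gives $\mathcal{J}^+ \cap \Lambda^- = \varnothing$ as subsets of $S$---but it is not wrong, and your handling of the arbitrary juncture orbit via invariance is exactly the point.
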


\begin{observation}[Laminations avoid ladders]\label{noladders}
	Let $j_+$ be a juncture, and ${U}^*_+$ be the neighborhood of $\Ends^+(S)$ bounded by the geodesic representative of ${j}_+$. Then $\Lambda^- \cap {U}^*_+ = \emptyset$. Similarly, for any negative juncture $j_-$, the corresponding neighborhood ${U}^*_-$ is disjoint from $\Lambda^+$. 
\end{observation}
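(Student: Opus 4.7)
The plan is to show $\Lambda^-\cap U^*_+=\varnothing$ by exploiting the Hausdorff approximation of $\Lambda^-$ by the geodesic representatives $\mathcal{J}^+_k$ of backward $f$-iterates of $j_+$, as recorded in the paragraph immediately preceding the observation. The statement for $\Lambda^+$ and $U^*_-$ will then follow by an entirely parallel argument applied to $f^{-1}$, so I focus on the positive case.

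First I would establish a nesting statement for geodesic representatives. Since $j_+$ bounds a tight positive ladder $U_+$ with $f(U_+)\subsetneq U_+$, each backward iterate $f^{-n}(j_+)$ for $n\geq 1$ is a multiloop disjoint from $j_+$ that bounds a neighborhood of $\Ends^+(S)$ properly containing $U_+$. Because disjoint simple closed multiloops on a hyperbolic surface admit disjoint geodesic representatives, and the nesting of the bounded neighborhoods of $\Ends^+(S)$ is preserved under straightening, the geodesic representative of $f^{-n}(j_+)$ is disjoint from $\partial U^*_+$ and bounds a neighborhood of $\Ends^+(S)$ that properly contains $\bar{U^*_+}$. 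In particular this geodesic representative lies in the closed set $S\setminus\bar{U^*_+}$, and therefore so does the whole multicurve $\mathcal{J}^+_k$ for every $k\leq -1$.

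Next I would promote this containment to a containment of $\Lambda^-$. Using the Hausdorff approximation $\Lambda^-\subset N_\varepsilon(\mathcal{J}^+_k)$ valid for all $k\leq K(\varepsilon)$, and choosing $k\leq\min(K(\varepsilon),-1)$, we obtain $\Lambda^-\subset N_\varepsilon(S\setminus\bar{U^*_+})$ for every $\varepsilon>0$. Letting $\varepsilon\to 0$,
\[
\Lambda^-\subset\bigcap_{\varepsilon>0}N_\varepsilon(S\setminus\bar{U^*_+})=\overline{S\setminus\bar{U^*_+}}=S\setminus U^*_+,
\]
where the last equality uses that $U^*_+$ is open and coincides with the interior of $\bar{U^*_+}$. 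This gives $\Lambda^-\cap U^*_+=\varnothing$. For the second statement, the same template works: approximating $\Lambda^+$ by the geodesic representatives of forward $f$-iterates of $j_-$, each of which bounds a strictly larger negative ladder than $U_-$ by tightness, one concludes $\Lambda^+\subset S\setminus U^*_-$.

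The main technical point, modest though it is, will be the nesting step for geodesic representatives on a possibly infinite-type hyperbolic surface: one has to know that if two homotopically nested simple multiloops bound neighborhoods of $\Ends^+(S)$, then the geodesic straightening of the outer one lies on the correct side of $\partial U^*_+$. This is standard for finite-type surfaces and transfers here because the relevant geometry takes place in a finite-type subsurface containing both multiloops, but it is the one place where I would be most careful in writing up a full proof.
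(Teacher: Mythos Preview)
The paper states this observation without proof, so there is no argument to compare against directly. Your approach is sound and yields the correct conclusion. Two minor remarks.

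First, the claim that the geodesic representative of $f^{-n}(j_+)$ lies in $S\setminus\bar{U^*_+}$ is slightly too strong: the juncture distinction property is not assumed, so a component of $f^{-n}(j_+)$ may be isotopic to a component of $j_+$, in which case their geodesic representatives coincide on $\partial U^*_+$. The safe containment is $\mathcal{J}^+_k\subset S\setminus U^*_+$ for $k\leq -1$; since $S\setminus U^*_+$ is already closed, your intersection over $\varepsilon$ still gives $\Lambda^-\subset S\setminus U^*_+$, and the conclusion is unaffected.

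Second, the Hausdorff $\varepsilon$-approximation can be bypassed entirely. The forward partial orbit $\bigcup_{i\geq 0}f^i(j_+)$ has a closed geodesic representative in $S$ (it is locally finite, escaping into the attracting ends), so $\Lambda^-=\bar{\mathcal{J}^+}-\mathcal{J}^+\subset\bar{\mathcal{J}^+_{-1}}$ directly from the definition; combined with $\mathcal{J}^+_{-1}\subset S\setminus U^*_+$ and closedness of the latter, the result follows without any limiting argument.
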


 After fixing a hyperbolic metric $X$ of $S$, the Handel-Miller geodesic laminations are independent of the isotopy class of $f$ or the choice of juncture orbits. Any two choices of juncture orbit pairs $(J_1^+, J_1^-)$ and $(J_2^+, J_2^-)$ yield the same Handel-Miller laminations $\Lambda^\mp$ \cite[Corollary 4.72]{CCF21}. Note that the $\Lambda^+, \Lambda^-$ may be empty if $\mathcal{J}^+, \mathcal{J}^+$ themselves are closed--- Cantwell-Conlon-Fenley show that this is the case precisely when the map is a \textit{translation}, i.e. there exists a connected neighborhood $U_e$ of an end $e$ so that $\mathcal{U}_e = \bigcup_{n\in\Z} f^n (U_e)= S$ \cite[Corollary 4.78]{CCF21}. 

The Handel-Miller laminations detect escaping curves in the following sense:  

\begin{proposition}\label{escaping}
Let $\gamma$ be a loop of $S$. The following are equivalent: 
	\begin{enumeri}
		\item $\gamma$ is positively escaping
		\item $\gamma$ meets only finitely many components of $J_+$
		\item $\gamma$ is disjoint from the negative Handel-Miller lamination $\Lambda^-$
	\end{enumeri}
	
Similarly, $\gamma$ is negatively escaping iff  it meets only finitely many components of $J_+$ iff the homotopy class can be made disjoint from the positive Handel-Miller lamination $\Lambda^+$.
\end{proposition}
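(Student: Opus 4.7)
The plan is to establish the cycle $(i) \Rightarrow (ii) \Rightarrow (iii) \Rightarrow (i)$, using the cocompact $\langle f \rangle$-action on $\mathcal{U}_+$ from \Cref{boundarysurfaces} together with the Hausdorff approximation $\mathcal{J}^+_k \to \Lambda^-$ as $k \to -\infty$ recorded in \Cref{HMlam}. For $(i) \Rightarrow (ii)$, I would fix a compact fundamental domain $D$ for $\langle f \rangle \curvearrowright \mathcal{U}_+$ whose frontier contains pieces of $j_+$ and $f(j_+)$; then the compact loop $\gamma \subset \mathcal{U}_+ = \bigcup_n f^n(D)$ meets only finitely many tiles $f^n(D)$ and hence only finitely many of the pairwise-disjoint juncture components $f^n(j_+)$ separating them.

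For $(ii) \Rightarrow (iii)$, finiteness furnishes $N$ with $\gamma$ disjoint from $f^i(j_+)$ for all $i \leq -N$; tightening to geodesic representatives gives $\gamma^* \cap \mathcal{J}^+_k = \emptyset$ for every $k \leq -N$. A transverse crossing $p$ of $\gamma^*$ with $\Lambda^-$ would, via the Hausdorff convergence, force some leaf of $\mathcal{J}^+_k$ (with $k$ sufficiently negative) to sit $C^0$-close to the leaf of $\Lambda^-$ through $p$ and hence also cross $\gamma^*$—a contradiction. The degenerate case $\gamma^* \subset \Lambda^-$ is ruled out because the closed leaves of $\Lambda^-$ lie in $\mathcal{J}^+$, which is already disjoint from $\gamma^*$.

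For $(iii) \Rightarrow (i)$, the analogous accumulation argument first upgrades $(iii)$ to $(ii)$: if $\gamma^*$ met infinitely many of the $f^n(j_+)^*$, compactness of $\gamma^*$ would force accumulation of intersection points; the Hausdorff limit of the $n \to -\infty$ subsequence lies in $\Lambda^-$ and contradicts $(iii)$, while the $n \to +\infty$ subsequence nests cofinally into the attracting-end ladder $U_+^*$ and cannot accumulate on a fixed compact geodesic unless $\gamma^* \subset U_+^*$, which already yields $(i)$. Once $(ii)$ is secured with a witness $M$, $\gamma$ lies on a single side of every juncture $f^n(j_+)$ for $n \leq -M$; since the nested neighborhoods $f^n(U_+)$ exhaust $\mathcal{U}_+$ as $n \to -\infty$, the attracting-end side is $f^{-M}(U_+) \subset \mathcal{U}_+$. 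The principal obstacle here is excluding the alternative that $\gamma$ sits on the opposite ``wrong side'' of each juncture—this appeals to the structural feature, established in the Cantwell-Conlon-Fenley theory, that leaves of $\Lambda^-$ spread through every non-escaping complementary region of $\mathcal{U}_+$, so an essential loop lying outside $\mathcal{U}_+$ could not remain disjoint from $\Lambda^-$, completing the contradiction with $(iii)$. The negative-escaping statement follows by swapping the roles of positive and negative throughout.
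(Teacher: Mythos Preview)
Your approach matches the paper's: both run the cycle $(i)\Rightarrow(ii)\Rightarrow(iii)$ and $(iii)\Rightarrow(ii)\Rightarrow(i)$ via the Hausdorff convergence $\mathcal{J}^+_k \to \Lambda^-$ together with the fact that the partial orbit $\jpp$ is closed in $S$. Two small points are worth flagging. First, your justification for ruling out $\gamma^* \subset \Lambda^-$ is misstated: by definition $\Lambda^- = \overline{\mathcal{J}^+} \setminus \mathcal{J}^+$ is \emph{disjoint} from $\mathcal{J}^+$, so ``closed leaves of $\Lambda^-$ lie in $\mathcal{J}^+$'' cannot be right; the correct reason is simply that leaves of $\Lambda^-$ are non-compact (their ends escape into ends of $S$, as the paper recalls from \cite[Corollary~4.47]{CCF21} in \Cref{piY-HM}). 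Second, for the last implication the paper is terser than you: it just asserts that since $f^k(j_+)$ separates $S$ and $\gamma$ misses it, $\gamma$ must lie in the positive ladder $f^k(U_+)$, without addressing the ``wrong side'' alternative you explicitly isolate. Your instinct to invoke the Cantwell--Conlon--Fenley structure theory here is reasonable, though the specific statement you cite (that $\Lambda^-$ pervades every non-escaping region) would need a precise reference; in practice both your sketch and the paper leave this step at the same level of detail.
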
	

\begin{proof}
	We'll prove the statement for a positively escaping loop; the proof for the negatively escaping is analogous. Suppose $\gamma \subset \mathcal{U}_+$, and let $J_+ = \bigcup f^k(j_+)$ be any positive juncture orbit of $f$.
	Given that $J_+$ is closed in $\mathcal{U}_+$, and by the compactness of $\gamma$, there are only finitely many $i$ for which $f^i(j_+)$ meet $\gamma$. In otherwords, $\gamma$ is disjoint from $\jpm$ for sufficiently large $k$. Since $\Lambda^- \subset \bar{\jpm}$ for all $k$, the converse of \Cref{prop:lamConvergence} implies that $\gamma$ does not meet $\Lambda^-$ transversely. This shows $(i) \implies (ii) \implies (iii)$.
	
	We'll show that $(iii) \implies (ii) \implies (i)$, assuming $\Lambda^-$ is non-empty (otherwise, the statement is vacuously true). If $\gamma$ can be made disjoint from $\Lambda^-$, then there exists $\varepsilon>0$ for which $\gamma$ is disjoint from $N_\varepsilon(\Lambda^-)$. Since the negative iterates of $j_+$ converge to the negative lamination, for sufficiently large $k$, $\jpm \subset N_\epsilon (\Lambda^-)$, and is therefore disjoint from $\gamma$. Again, since $\jpp$ is closed, $\gamma$ intersects at most finitely many of its components. Thus, $\gamma$ meets only finitely many components of $J_+$. Since the junctures separate $S$, it must be that $\gamma$ is contained in the positive ladder defined by $f^{k}(j_+)$. This completes the proof.
\end{proof}

\begin{remark}\label{piY-HM}
	For any compact, essential subsurface, $Y$, 
	\begin{enumerate}
		\item  $\pi_Y(\Lambda^\pm) \in \mathcal{P}(\mathcal{AC}(S))$ is either empty or a diameter-$1$ subset of $\mathcal{AC}(S)$. 
		\item If $\pi_Y(\Lambda^\pm) = \emptyset$, then $\Lambda^\pm$ may be realized disjointly from the subsurface $Y$.
		\item The following are equivalent: 
		\begin{enumerate}[label=(\roman*)]
			\item Both laminations $\Lambda^+$ and $\Lambda^-$ meet $Y$ 
			\item Both laminations $\Lambda^+$ and $\Lambda^-$ meet $\partial Y$ 
			\item $Y$ is not contained in any positive or negative ladder of $f$.  
		\end{enumerate}	
	\end{enumerate}
\end{remark}
\begin{proof}
	The first bullet follows immediately. The second would be immediate if the components of $\Lambda^\pm \cap Y$ were apriori proper paths. Indeed, a corollary of Cantwell-Conlon-Fenley \cite[Corollary 4.47]{CCF21} states that no leaf of $\Lambda^\pm$ is contained in a bounded region of $S$. In fact, they show that for each leaf of the positive (resp. negative) lamination, each of its ends escape into attracting (resp. repelling) ends of $S$. Thus, the components of the intersection $\Lambda^\pm \cap Y$ are never half leaves, hence are compact arcs of $Y$ which necessarily meet $\partial Y$. This discussion shows $(i) \iff (ii)$. 
	
	We'll prove $(i)\iff (iii)$ by contrapositive. If $Y$ were contained any positive (resp. negative) ladder, then it meets at most finitely many components of $J_-$. By \Cref{escaping}, it is disjoint from $\Lambda^+$, a contradiction. Similarly, if $Y$ misses some lamination, then \Cref{escaping} shows that it meets only finitely many components of $J_\pm$, which means up to an isotopy of the subsurface, it is contained in some ladder.  	
\end{proof}

\begin{corollary}\label{mustMeetLam}
	Suppose $f$ is (strongly) irreducible. For any curve $\gamma \subset S$, either $\gamma$ meets $\Lambda^+$, $\gamma$ meets $\Lambda^-$, or both. Similarly, for each essential subsurface $Y$ of $S$, either $Y$ meets $\Lambda^+$, $Y$ meets $\Lambda^-$ or both laminations.
\end{corollary}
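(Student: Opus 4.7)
The plan is to argue by contrapositive in both parts, using Proposition~\ref{escaping} to translate ``misses a lamination'' into ``escapes into an end,'' and then invoking the hypothesis that $f$ admits no reducing curves.

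\textbf{Curve case.} Suppose a curve $\gamma \subset S$ is disjoint from both $\Lambda^+$ and $\Lambda^-$. By Proposition~\ref{escaping}, being disjoint from $\Lambda^-$ means $\gamma$ is positively escaping, and being disjoint from $\Lambda^+$ means $\gamma$ is negatively escaping. Thus there exist $m, n \in \Z$ with $f^n(\gamma) \subset \mathcal{U}_+$ and $f^m(\gamma) \subset \mathcal{U}_-$, so $\gamma$ is reducing by the definition given in Section~\ref{eph}. Since $f$ is (strongly) irreducible, it admits no reducing curves, a contradiction.

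\textbf{Subsurface case.} Suppose an essential subsurface $Y$ satisfies $\pi_Y(\Lambda^+) = \pi_Y(\Lambda^-) = \emptyset$. By Remark~\ref{piY-HM}(2), both $\Lambda^+$ and $\Lambda^-$ may be realized disjointly from $Y$, hence disjointly from each boundary component of $Y$. Since $Y$ is essential, every component $\gamma \subset \partial Y$ represents an essential curve of $S$, and by the curve case applied to $\gamma$, we have a contradiction.

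The only mild subtlety is making sure that ``disjoint from the subsurface $Y$'' for a lamination implies ``disjoint from $\partial Y$'' as a curve, so that the reduction to the curve case is valid. This is immediate from the fact that once $\Lambda^\pm$ is realized disjointly from $Y$, it is in particular disjoint from $\partial Y \subset Y$, and no component of $\partial Y$ bounds a disk or is peripheral (by essentiality), so disjointness of representatives yields the needed vanishing of $\pi_\gamma(\Lambda^\pm)$ in the annular sense and of geometric intersection. There is no real obstacle here; the entire corollary follows in a few lines from the equivalence in Proposition~\ref{escaping} combined with the definition of (strong) irreducibility.
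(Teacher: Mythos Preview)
Your proof is correct and follows essentially the same approach as the paper's: both argue by contrapositive, invoking \Cref{escaping} to show that a curve disjoint from both laminations is simultaneously positively and negatively escaping, hence reducing, contradicting (strong) irreducibility. The only cosmetic difference is in the subsurface case: the paper simply declares the argument ``identical'' (implicitly via \Cref{piY-HM}(3) or the same escaping reasoning applied to $Y$), whereas you explicitly reduce to the curve case by passing to a boundary component of $Y$---both are valid and amount to the same idea.
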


In other words, each loop or essential subsurface is either positively escaping, negatively escaping, or ``trapped" by both laminations.

\begin{proof}
	If $\gamma$ is disjoint from $\Lambda^+$, then it is negatively escaping and is contained in $\mathcal{U}_+$, similarly if it is disjoint from $\Lambda^-$, it is positively escaping and is contained in $\mathcal{U}_-$. This implies that $\gamma$ is a reducing curve which is ruled out by the assumptions of a (strongly) irreducible end-periodic homeomorphism. The proof for a subsurface is identical.
\end{proof}

\subsubsection{Pseudo-invariance}
For any hyperbolic structure $X$, there is a representative homeomorphism $f^\sharp$ known as a \textit{Handel-Miller representative} of $f$ which is isotopic to $f$ and preserves the geodesic Handel-Miller laminations (and a choice of geodesic juncture orbits). We point the reader to work of Fenley \cite{F97}, and Cantwell-Colon-Fenley \cite{CCF21} for further exposition on the laminations and its  Handel-Miller representatives, but end this section with a few observations. Subsurface projection distance gives a explicit sense in which our original end-periodic homeomorphism $f$ preserves the Handel-Miller geodesic laminations: 

\begin{lemma}[Pseudo $f$-invariance]\label{f-invariance}
	For any essential subsurface $Y$,
	$\pi_Y(f(\Lambda^\pm)) = \pi_Y(\Lambda^\pm)$ and $d_Y(\Lambda^+, \Lambda^-) = d_{f^n(Y)}(\Lambda^+, \Lambda^-)$.
\end{lemma}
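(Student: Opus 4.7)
The plan is to leverage the Handel-Miller representative $f^\sharp$ discussed in the paragraph just preceding the lemma: $f^\sharp$ is isotopic to $f$ and setwise preserves the geodesic laminations, so $f^\sharp(\Lambda^+) = \Lambda^+$ and $f^\sharp(\Lambda^-) = \Lambda^-$. The key general fact I will use is that subsurface projection depends only on the isotopy class of the input lamination: the projection is defined by lifting to the cover $\tilde X_Y \to S$ associated to $\pi_1(Y)$ and recording which components represent essential arcs/curves in $X_Y$, and this construction factors through isotopy in $S$.

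For the first statement, fix an isotopy $H_t$ from $f^\sharp$ to $f$. Then $H_t(\Lambda^\pm)$ is a continuous deformation of laminations on $S$ from $\Lambda^\pm$ (at $t=0$, since $f^\sharp(\Lambda^\pm)=\Lambda^\pm$) to $f(\Lambda^\pm)$ (at $t=1$). Lifting this isotopy to $\tilde X_Y$ gives an isotopy of the preimage laminations in the cover; since isotopies preserve the set of essential arc/curve components up to their own isotopy, the vertex set of $\mathcal{AC}(Y)$ produced is unchanged. Hence $\pi_Y(f(\Lambda^\pm)) = \pi_Y(f^\sharp(\Lambda^\pm)) = \pi_Y(\Lambda^\pm)$. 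For the second statement, I apply \Cref{projcalculations}(ii) with $h=f^n$ to obtain
\[
d_Y(\Lambda^+,\Lambda^-) \;=\; d_{f^n(Y)}(f^n(\Lambda^+),\, f^n(\Lambda^-)).
\]
Then I iterate the first step: since $f\simeq f^\sharp$ and the Handel-Miller laminations are $f^\sharp$-invariant, a straightforward induction on $n$ (or directly using the isotopy $f^n \simeq (f^\sharp)^n$, which preserves $\Lambda^\pm$ setwise) gives that $f^n(\Lambda^\pm)$ is isotopic to $\Lambda^\pm$ in $S$, so $\pi_{f^n(Y)}(f^n(\Lambda^\pm))=\pi_{f^n(Y)}(\Lambda^\pm)$. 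Combining these yields $d_{f^n(Y)}(f^n(\Lambda^+),f^n(\Lambda^-)) = d_{f^n(Y)}(\Lambda^+,\Lambda^-)$, as desired.

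The only delicate point, and the main thing to justify, is that an isotopy between homeomorphisms of an infinite-type surface actually induces an honest isotopy of laminations whose components behave well under projection to a \emph{compact} subsurface $Y$. This is not really an obstacle because all of the action takes place in the cover $\tilde X_Y$, which is topologically $Y$: the restriction of the isotopy to a neighborhood of the (compact) lifted leaves intersected with a fundamental domain is an isotopy of a finite collection of arcs, and subsurface projection respects this. Thus the argument reduces entirely to the standard finite-type fact that isotopic curves and arcs define the same vertex of $\mathcal{AC}(Y)$, together with the existence of the Handel-Miller representative.
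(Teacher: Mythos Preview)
Your proposal is correct and follows essentially the same approach as the paper: both invoke the Handel-Miller representative $f^\sharp$ isotopic to $f$ with $f^\sharp(\Lambda^\pm)=\Lambda^\pm$, lift the isotopy to the cover $\tilde X_Y$, and argue that the isotopy induces a bijection between the essential arc/curve components of the lifted laminations, giving $\pi_Y(f(\Lambda^\pm))=\pi_Y(\Lambda^\pm)$; the second claim then follows from \Cref{projcalculations}(ii). Your version is slightly more explicit about the iteration to $f^n$ and about why the infinite-type setting causes no trouble, but the argument is the same.
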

\begin{proof}
	Let $\Lambda^+, \Lambda^-$ denote the Handel-Miller geodesic laminations, and let $f^\sharp$ denote a Handel-Miller representative homeomorphism isotopic to $f$ so that $f^\sharp (\Lambda^\pm)=\Lambda^\pm$. We'll assume $\pi_Y(\Lambda^-), \pi_Y(\Lambda^+) \neq \emptyset$. Lift $\Lambda^\pm$ and $f(\Lambda^\pm)$, to laminations $\widetilde{\Lambda}^\pm$ and $\widetilde{f\Lambda}^\pm$ of the cover $\tilde X_Y$ corresponding to $Y$. The isotopy between the lifts $\tilde f$ and $\tilde f^\sharp$ produces a bijection between any properly embedded leaf $\tilde{f^\sharp \lambda}$ of $\widetilde{\Lambda}^\pm$ to the leaf $\widetilde{f(\lambda)}$ of $\widetilde{f\Lambda}^\pm$. It follows that $\pi_Y(f^n(\Lambda^\pm)) = \pi_Y(\Lambda^\pm)$ so that \[d_{f^nY}(\Lambda^+, \Lambda^-) = \diam_{\mathcal{AC}(f^nY)}(\pi_Y(f^n(\Lambda^+) \cup \pi_Y(f^n(\Lambda^-)) = d_Y(\Lambda^+, \Lambda^-).\]
\end{proof}

\subsubsection{Cores}
A disjoint pair of positive and negative tight ladders, $U_+$, $U_-$, defines a compact surface $C= S - (U_+ \cup U_-)$ called a \emph{core}. The boundary of the core consists of positive and negative junctures denoted respectively by $\partial_\pm C = \partial C \cap \bar{U_\pm}$. A core $C'$ is a \textit{subcore} of $C$ if $C' \subset C$, and each component of $\partial C'$ is an essential loop of $C$.  A core $C$ is \textit{minimal} if its Euler chracteristic realizes the quantity $$\min\{ |\chi(C')| : C' \text{ is a core of } S\}.$$ Since cores bound positive and negative tight ladders, we may expand a core by the action of $f$. We define a core $C$ to be $f$-\textit{extended} if it contains a subcore $C'$ so that $$\partial_+ C' \subset \bigcup_{k>0} f^{-k}(\partial_+ C), \quad \text{and}\quad \partial_- C' \subset \bigcup_{k>0} f^{k}(\partial_- C).$$ 

\begin{figure}[h]
	\includegraphics[width=.8\textwidth]{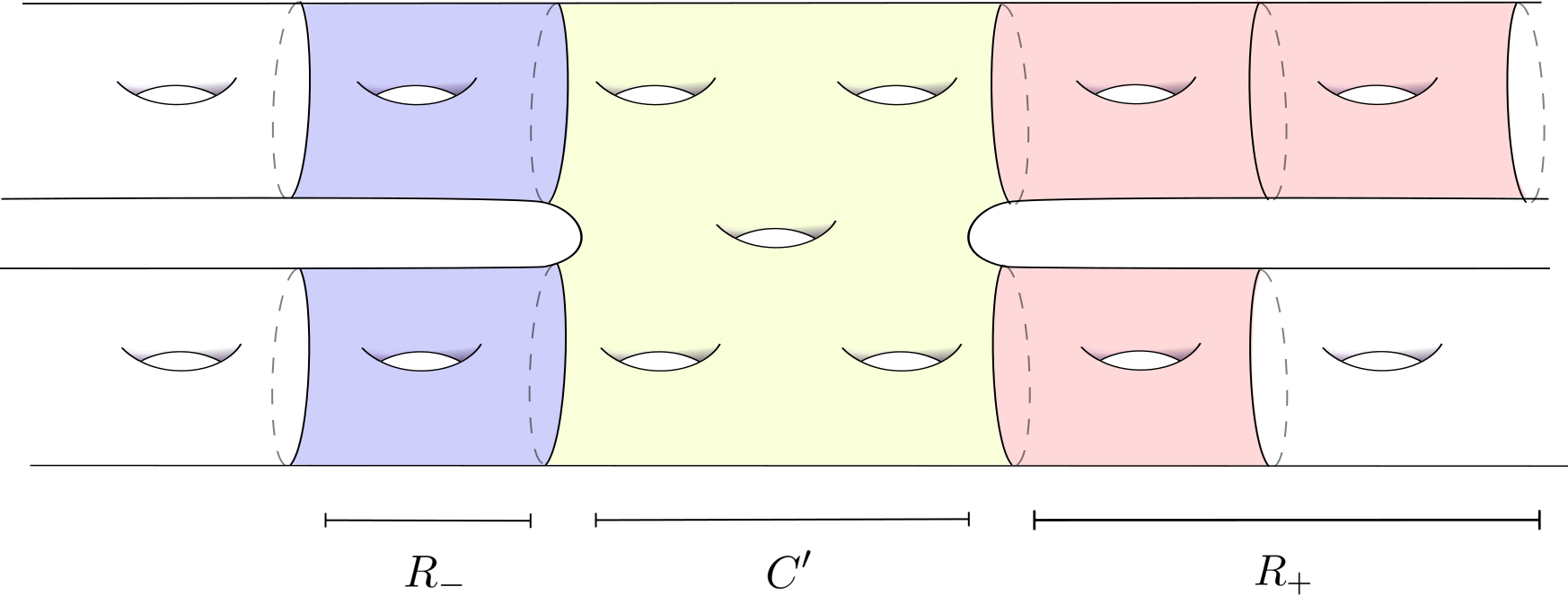}
	\caption{The core $C = C'\cup R_+ \cup R_-$ is $f$-extended.}
\end{figure}

We note to the reader that neither cores nor components of ladders $U_+ \,\cap\, \mathcal{U}_e$ in a particular end $e$ are necessarily connected. The following remark notes that each end of $S$ has a unique unbounded component of $S-C$, which is connected. 

\begin{remark}\label{infcomponent}
	For any core $C = S - (U_+ \cup U_-)$, the collection $U_1, \ldots U_n$ of unbounded components of $U_+ \cup U_-$ are in bijection with $\Ends(S)$.
\end{remark}
\begin{proof}
	Consider $K_0$, the union of the closure of all pre-compact components of $U_+ \cup U_-$, and let $K = C \cup K_0$. Then, $K$ is compact and the components of $S -K$ define distinct end-neighborhoods, all of whom are necessarily the non-precompact, i.e. unbounded, components of $S - (U_+ \cup U_-)$. 
\end{proof}

\subsubsection{Complexity of end-periodic maps}\label{capacityDefn}
The cores and shifting behavior of an end-periodic map $f$ provide a notion of complexity for the map. Field-Kent-Leininger-Loving \cite{FKeLL23} define the \emph{core characteristic} $\chi(f)$ of $f$ as $$\chi(f)  := \min\{ |\chi(C)| : C \text{ is a core of } S\}$$ and \cite{FKLL23} define the \textit{end complexity} $\xi(f)$ of $f$ as $$\xi(f) := \xi(\Sigma_+ \cup \Sigma_-)$$ We direct the reader's attention to \Cref{boundarysurfaces} which states that $\partial \M_f$ is homeomorphic to the compact subsurface $\Sigma_+ \cup \Sigma_-$. As a joint notion of complexity, we define the \textit{capacity genus} $\capgen(f)$ of $f$ by the following: 
$${\capgen}(f) = \chi(f) + \xi(f)^2.$$

\subsection{The compactified mapping torus}\label{mfconstruct}
Recall that $M_f$ is obtained by the quotient map \[\pi: S \times\R \to S \times\R /\langle F \rangle \] where $F(x,t) = (f(x), t-1)$.  This induces a $2$-dimensional foliation given by the fibers of the form $S_t = S\times \{t\}$. 

We recall Field-Kim-Leininger-Loving's discussion of the compactification of $M_f$ which was introduced by Fenley \cite[Section 3]{F97}. Let $\tilde{M}_\infty$ be the partial compactification of $S\times \R$ given by $\tilde{M_\infty}= S\times \R \, \sqcup \, \mathcal{U}_{+\infty} \, \sqcup \, \mathcal{U_{-\infty}}$, where $\mathcal{U_{\pm \infty}}= \mathcal{U_\pm}\times \{\pm \infty\}$. Consider the foliation $\tilde{\mathcal{F}}_\infty $ of $\tilde{M_\infty}$ whose leaves consist of the pre-fibers $S_t = S\times \{t\}$ along with $\mathcal{U}_{+\infty}$ and $\mathcal{U}_{-\infty}$. The action of $F$ on $S\times\R$ extends to $\widetilde{M}_\infty$ so that $F(x,\pm \infty) = (f(x),\pm \infty)$ preserving the leaves of $\widetilde{\mathcal{F}}$. Let $\mathcal{F}$ denote the $2$-dimensional depth-one foliation of $\M_f$ obtained by the quotient of $\tilde{\mathcal{F}}_\infty$. Each component $\Sigma$ of $\partial M$ is a depth-zero leaf of $\mathcal{F}$, while the $S_t$ prefibers are depth-one leaves.

\begin{theorem}\label{cpctification}(Field-Kim-Leininger-Loving) $\bar{M}_f := \tilde{M}_\infty / \langle F \rangle$ is a compact $3$-manifold with boundary whose interior is homeomorphic to $M_f$.  Further, $\partial \M_f = \partial_+ \M_f \sqcup \partial_- \M_f $ where $\partial_\pm \M_f=\Sigma_\pm$.
\end{theorem}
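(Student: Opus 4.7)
\emph{Plan.} The proof reduces to three claims: (a) $\tilde M_\infty$ is naturally a $3$-manifold with boundary; (b) the $F$-action extends freely and properly discontinuously to $\tilde M_\infty$; and (c) the quotient $\bar M_f$ is compact, with boundary $\Sigma_+\sqcup\Sigma_-$.

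For (a), I would topologize $\tilde M_\infty$ by giving $S\times\R$ its product topology and declaring basic open neighborhoods of $(x,+\infty)\in\mathcal{U}_+\times\{+\infty\}$ to be sets of the form $(V\times(T,+\infty))\cup(V\times\{+\infty\})$, with $V\subset\mathcal{U}_+$ open around $x$ and $T\in\R$, and symmetrically at $-\infty$. This yields a Hausdorff, second countable space whose boundary neighborhoods are homeomorphic to half-spaces $V\times(T,+\infty]$, so $\tilde M_\infty$ is a $3$-manifold with boundary $(\mathcal{U}_+\times\{+\infty\})\sqcup(\mathcal{U}_-\times\{-\infty\})$. For (b), the extension $F(x,\pm\infty)=(f(x),\pm\infty)$ is a homeomorphism preserving the boundary; freeness holds on the interior via the time shift and on the boundary via freeness of $\langle f\rangle$ on $\mathcal{U}_\pm$ (\Cref{boundarysurfaces}), and proper discontinuity follows analogously, with mixed interior/boundary interactions controlled by the bounded time-range in any basic boundary neighborhood. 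Standard quotient theory then gives $\bar M_f$ a $3$-manifold structure with interior $M_f$ and boundary $\mathcal{U}_+/\langle f\rangle\sqcup\mathcal{U}_-/\langle f\rangle=\Sigma_+\sqcup\Sigma_-$.

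The main obstacle is (c), compactness of $\bar M_f$. My approach is to decompose $\bar M_f$ into a finite union of compact pieces. First, the \emph{core piece}: the mapping torus of $f$ restricted to the closed $f$-invariant non-escaping set $S\setminus(\mathcal{U}_+\cup\mathcal{U}_-)$, which is contained in the compact core $C$, hence itself compact; its mapping torus is therefore compact. Next, for each of the finitely many $f$-orbits $\mathcal{O}$ of ends of $S$, the \emph{end piece}: the closure in $\bar M_f$ of the image of $\mathcal{U}_\mathcal{O}\times\R$. Using the $\Z$-cover $\mathcal{U}_\mathcal{O}\to\Sigma_\mathcal{O}$ supplied by \Cref{boundarysurfaces}, the quotient $(\mathcal{U}_\mathcal{O}\times\R)/\langle F\rangle$ identifies with a cylinder $\Sigma_\mathcal{O}\times\R$; one of its two ends (at $+\infty$ for attracting $\mathcal{O}$, $-\infty$ for repelling) is capped in $\bar M_f$ by the compact surface $\Sigma_\mathcal{O}$.

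The key technical step is to verify that the opposite (uncapped) end accumulates into the compact core piece: orbit representatives with $x$-coordinate in a fixed compact $f$-fundamental domain $R_\mathcal{O}=U_\mathcal{O}\setminus f(U_\mathcal{O})\subset\mathcal{U}_\mathcal{O}$ and $\R$-coordinate tending toward the uncapped side are $F$-equivalent to representatives whose $x$-coordinate lies in deep backward $f$-translates $f^{-n}(R_\mathcal{O})$, which accumulate on $\partial\mathcal{U}_\mathcal{O}\subset C$ and hence into the core piece. This shows each end piece has compact closure in $\bar M_f$, and assembling the core piece with these finitely many end-pieces exhibits $\bar M_f$ as a finite union of compact sets, hence compact.
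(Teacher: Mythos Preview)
The paper does not prove this statement: it is quoted as a theorem of Field--Kim--Leininger--Loving (with the construction attributed to Fenley), and no argument is supplied here. So there is nothing in this paper to compare your proposal against; let me instead comment on the proposal itself.

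Your steps (a) and (b) are correct and would go through as written. The gap is in (c), in your treatment of the uncapped end of each end piece. You normalize a sequence going to the uncapped side so that the $x$-coordinate lies in $R_\mathcal{O}$ and $t_k\to-\infty$, then apply $F$-powers to land the $x$-coordinate in $f^{-n}(R_\mathcal{O})$, and assert these deep backward translates ``accumulate on $\partial\mathcal{U}_\mathcal{O}\subset C$ and hence into the core piece.'' Neither clause is justified: the sets $f^{-n}(R_\mathcal{O})$ exhaust the $f$-invariant open set $\mathcal{U}_\mathcal{O}$ rather than accumulate on its frontier, that frontier is itself $f$-invariant and hence not contained in any core, and points of $f^{-n}(R_\mathcal{O})$ can lie deep inside a \emph{repelling} ladder (nothing prevents $\mathcal{U}_+\cap\mathcal{U}_-\neq\emptyset$ for points). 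Moreover your ``core piece'' --- the mapping torus of the non-escaping set --- can be empty. So the subsequence argument does not close.

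A cleaner route is to exhibit a single compact $K\subset\tilde M_\infty$ whose $F$-translates cover; then $\bar M_f$ is the continuous image of $K$. With $C=S\setminus(U_+\cup U_-)$ a core, $R_+=\overline{U_+\setminus f(U_+)}$, $R_-=\overline{U_-\setminus f^{-1}(U_-)}$, take
\[
K \;=\; \big(C\times[0,1]\big)\ \cup\ \big(R_+\times[0,\infty]\big)\ \cup\ \big(R_-\times[-\infty,1]\big).
\]
Each piece is compact. Given $(x,t)\in S\times\R$, first apply $F^{\lfloor t\rfloor}$ to get $(y,s)$ with $s\in[0,1)$; if $y\in C$ we are done, if $y\in f^k(R_+)$ with $k\ge 0$ apply $F^{-k}$ to land in $R_+\times[0,\infty)$, and if $y\in f^{-k}(R_-)$ with $k\ge 0$ apply $F^{k}$ to land in $R_-\times(-\infty,1)$. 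Boundary points $(x,\pm\infty)$ are handled by the same shell decomposition of $\mathcal{U}_\pm=\bigsqcup_{k\in\Z}f^k(R_\pm)$. This replaces your accumulation argument with a direct surjection and avoids the issue entirely; it is also essentially what underlies the paper's later discussion of spiraling neighborhoods (Proposition~\ref{product} and Lemma~\ref{spiralingNbhds}).
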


\begin{figure}[h]
\includegraphics[width=.75\textwidth]{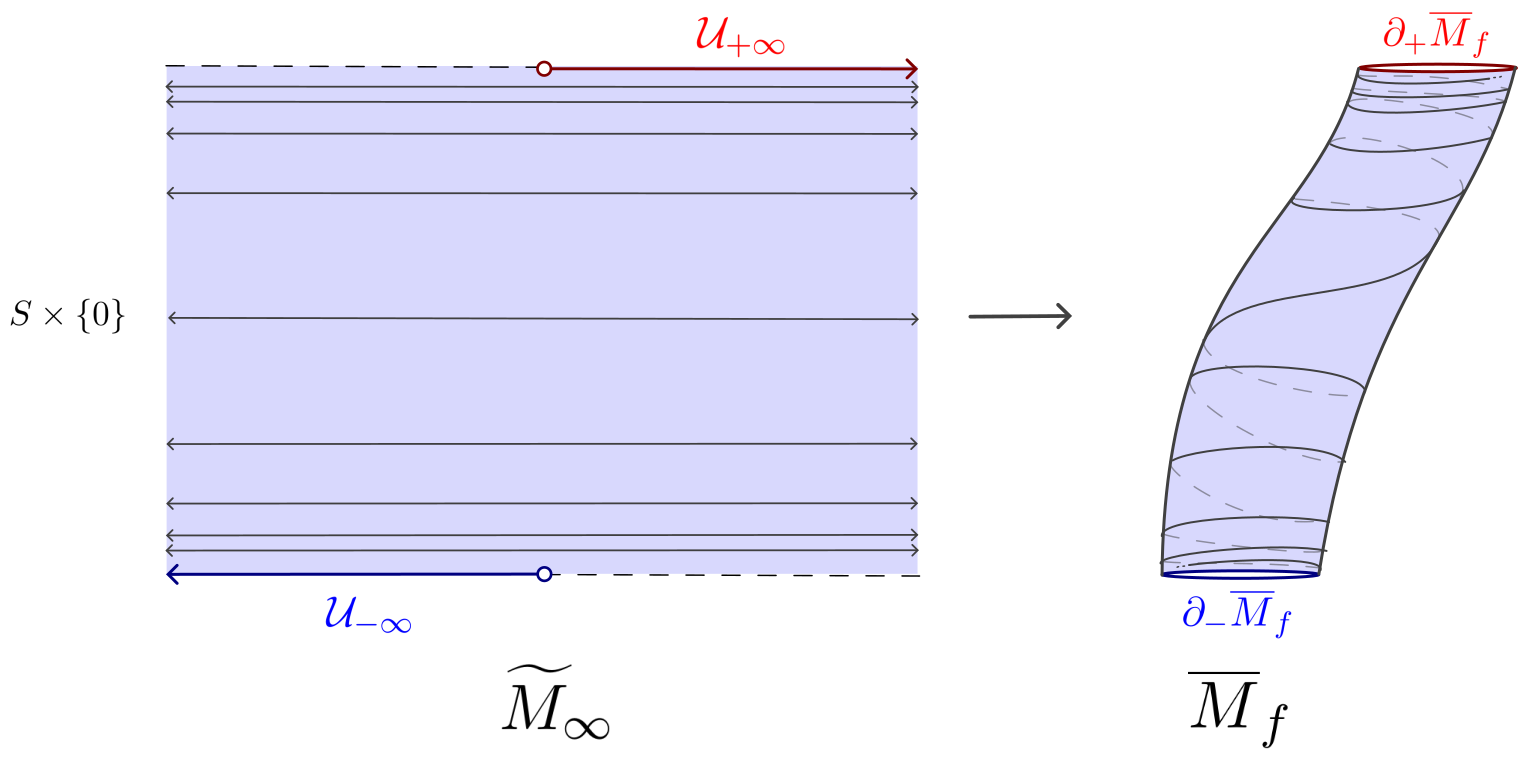}
\caption{A schematic of the quotient $\M_f = \tilde M_\infty / \langle F \rangle$. The depth-one foliation $\widetilde{\mathcal{F}}$ descends to a depth-one foliation $\mathcal{F}$ of $\M_f$, where the $S_t= S\times \{t\}$ pre-fibers spiral about the compact leaves.}
\end{figure}

The flow on $\tilde{M}_\infty=S\times \R$ given by $\hat\varphi((x,s),t) =(x,s+t)$ extends to a semi-flow on $\tilde{M}_\infty$ by adding endpoints $(x,\pm\infty)$ to each flow line $\ell_x(t) = (x,t)$ of $x\in\mathcal{U}_\pm$. 
This vertical flow and semi-flow are both $F$-equivariant, thus they descend to a flow and semi-flow on $M_f$, known as the \textit{suspension (semi-)flow} $\varphi_M$ on $\M_f$. We will recycle notation and use $\varphi_M$ to refer to both the flow on $M_f$ and the semi-flow on $\M_f$. Let $\eL$ be the $1$-dimensional oriented foliation induced from the suspension semi-flow lines of $\varphi_M$ on $\M_f$.

\begin{proposition}[Product structure at the boundary]\label{product}
	In a product neighborhood $U \cong \partial_+ M \times (k, \infty]$ of $\partial_+ M$, the foliation $\eL$ is 
	modeled by the vertical semi-flow given by: $$\varphi_{M}: (\partial_+ M \times (k, \infty])  \times \R \to (\partial_+ M \times (k, \infty])$$
	$$((x,t),s) \mapsto(x,t+s)$$
	
	Likewise holds in a product neighborhood $\partial_- M \times [-\infty, k)$ of $\partial_- M$. 
\end{proposition}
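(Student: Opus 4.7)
My plan is to lift to the partial compactification $\tilde M_\infty$, where the vertical flow $\hat\varphi((x,s),t) = (x, s+t)$ and the deck transformation $F(x,t) = (f(x), t-1)$ are explicit, and then change coordinates so that $F$ acts trivially along the flow direction. Once this is done, the quotient of a saturated neighborhood of $\mathcal{U}_+ \times \{+\infty\}$ becomes visibly a product $\Sigma_+ \times (k, +\infty]$ on which the semi-flow is vertical translation.

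The key is to produce a smooth cocycle $\tau : \mathcal{U}_+ \to \R$ satisfying $\tau \circ f = \tau - 1$. By \Cref{boundarysurfaces}, $\sigma : \mathcal{U}_+ \to \Sigma_+$ restricts on each end-orbit component to an infinite cyclic cover, so I represent the cohomology class of this cover by a smooth closed $1$-form $\omega$ on $\Sigma_+$ with $\int_\gamma \omega = -1$ on the relevant generator; then $\sigma^*\omega = d\tau$ on $\mathcal{U}_+$ gives a $\tau$ with the required identity. For an end-orbit of period $p > 1$ I would first build $\tau$ on a single component under $f^p$ and extend to the other $p-1$ components by declaring $\tau(f^j(x)) = \tau(x) - j$. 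Defining $\Phi : \mathcal{U}_+ \times \R \to \Sigma_+ \times \R$ by $\Phi(x,t) = (\sigma(x),\, t - \tau(x))$, the cocycle identity yields $\Phi \circ F = \Phi$, and a pointwise inverse is built by lifting through $\sigma$, so $\Phi$ descends to a homeomorphism $\bar\Phi$ of $(\mathcal{U}_+ \times \R)/\langle F \rangle$ with $\Sigma_+ \times \R$. By construction the vertical flow is carried to translation in the second coordinate.

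Finally I extend $\Phi$ across the added boundary level by $\Phi(x, +\infty) = (\sigma(x), +\infty)$; continuity at $+\infty$ is immediate from the product structure of $\tilde M_\infty$ near $\mathcal{U}_{+\infty}$, and the extension is a homeomorphism onto $\Sigma_+ \times (-\infty, +\infty]$ sending the semi-flow to vertical translation. The sub-neighborhood of $\partial_+ M$ corresponding to $\Sigma_+ \times (k, +\infty]$ is the claimed product collar, and the argument for $\partial_- M$ is symmetric using a cocycle $\tau' : \mathcal{U}_- \to \R$ with $\tau' \circ f = \tau' + 1$. The main technical point is producing $\tau$ coherently across the components of $\mathcal{U}_+$ permuted by $f$ within a single end-orbit, together with checking that the trivializing map extends continuously across the compactified boundary; both are standard for infinite cyclic covers but warrant careful bookkeeping.
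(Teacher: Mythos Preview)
Your argument is correct and takes a genuinely different route from the paper's. The paper works with explicit fundamental-domain charts: it takes a tight ladder $U_+$, sets $R_+ = U_+ \setminus f(U_+)$, observes that $R_+ \times (\R \cup \{\infty\})$ injects into $\bar M_f$ under the quotient map (since no two points are $F$-related), and then patches this chart with an annular chart $A_0 \times (\R \cup \{\infty\})$ around the juncture $j_+$ to cover a full collar of $\partial_+ M$. On each chart the flow is visibly vertical, and that is the whole proof. Your approach instead produces a single global trivialization $\bar\Phi : (\mathcal{U}_+ \times (\R\cup\{+\infty\}))/\langle F\rangle \xrightarrow{\ \cong\ } \Sigma_+ \times (-\infty,+\infty]$ via the cocycle $\tau$, which conjugates the semi-flow to translation everywhere at once. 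The paper's method is shorter and entirely hands-on; yours is more conceptual, exhibits the whole open set $(\mathcal{U}_+\times\R)/\langle F\rangle \subset M_f$ as a product in one stroke, and makes contact with the cohomological picture (your $[\omega]$ is exactly the juncture class $[j]\in H^1(\partial M)$ used later in the paper).

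One normalization to watch in the period $p>1$ case: on a single connected component $\mathcal{U}_{\epsilon_0}$ the deck transformation of the connected cover $\mathcal{U}_{\epsilon_0}\to \Sigma_{\mathcal O}$ is $f^p$, not $f$, so a primitive loop $\gamma\subset \Sigma_{\mathcal O}$ lifts to a path from $x$ to $f^p(x)$. With $\int_\gamma\omega = -1$ you get $\tau_0\circ f^p = \tau_0 - 1$, and then your extension rule $\tau(f^j(x)) = \tau_0(x) - j$ fails at $j=p-1$ (one computes $\tau(f(y)) - \tau(y) = p-2$ there). The fix is to normalize $\int_\gamma\omega = -p$, equivalently take $[\omega]$ to be the restriction of the juncture class itself rather than a primitive class; then $\tau_0\circ f^p = \tau_0 - p$ and the extension gives $\tau\circ f = \tau - 1$ globally. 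You already flagged this step as needing care, so this is just the specific bookkeeping to carry out.
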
 
\begin{proof}
	Consider the quotient from the partially compactified space $\pi: \tilde{M_\infty} \to M$. We work in the positive ``end'' of the manifold, but note that the proof is analogous for the negative ``end''. Choose a representative multi-loop $j_+$ of $S$ which bounds a tight ladder $U_+$ in $S$. Consider the open subsurface $R_+ \subset S_0$ bounded by $j_+$ and $f(j_+)$. 

	Since the ladder is tight, it follows that no two points of $R_+ \times (\R \,\sqcup\, \{\infty\}) \subset S_0\times (\R \,\sqcup\, \{\infty\})$ are identified under the action of $\langle F \rangle$. Thus, $\pi|_{R_+\times \R}$ is an embedding, and the product structure of the flow on $R_+\times (\R \,\cup\, \{\infty\})$ is preserved in its image in $M$. By patching together a second ``chart" of the form $A_0 \times \R \sqcup \{\infty\}$, where $A_0$ is an open annulus of $j_+$, it follows that $\eL$ is a foliation by vertical lines in this collar neighborhood of $\partial_+ M$. 
\end{proof}

Keeping in mind the product structure of $\eL$ in the boundary, a \textit{spiraling neighborhood} of a component $\Sigma \subset \partial M$ is a collar neighborhood $U_{\scriptscriptstyle \Sigma}$ of $\Sigma$ so that: 
\begin{enumeri}
	\item $U_{\scriptscriptstyle \Sigma}$ is a product foliated by arcs of $\eL$, and 
	\item $\partial U_{\scriptscriptstyle \Sigma} = \Sigma \sqcup \Sigma_U$, where $\Sigma_U \cong \Sigma$ is transverse to $\eL$ and the foliation $\mathcal{F}$. 
\end{enumeri}

We refer to a spiraling neighborhood of $\partial M$ (or $\partial_+ M$ or $\partial_- M$) as a union of disjoint spiraling neighborhoods over each of its components (see e.g. \cite[Section 3.1]{LMT23}).

\begin{lemma}\label{spiralingNbhds}
	Given a positive juncture $j_+$ = $\partial U_+$ there exists a spiraling neighborhood $U^+$ of $\partial_+ M$ so that $U_+\subset S\cap U^+$. If $f$ is \textit{pure}, i.e $f$ preserves each end of $S$, then there exists a spiraling neighborhood so that $U^+ = S\cap U^+$.
\end{lemma}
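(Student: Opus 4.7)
The plan is to build $U^+$ as the image in $\M_f$ of an $F$-equivariant region of $\tilde M_\infty$ lying above the graph of a height function. By \Cref{boundarysurfaces}, $\langle f\rangle$ acts freely, properly discontinuously, and cocompactly on $\mathcal{U}_+$, so the projection $p : \mathcal{U}_+ \to \Sigma_+$ is a regular $\Z$-cover. Such a cover is classified by a map $\Sigma_+ \to S^1 = \R/\Z$, and lifting (then smoothing within its equivariance class) yields a smooth function $h : \mathcal{U}_+ \to \R$ satisfying $h(f(y)) = h(y) - 1$. Since the closure of the fundamental region $R_+ := U_+ \setminus f(U_+)$ descends to a fundamental domain in $\Sigma_+$ and is therefore compact, $h|_{\bar R_+}$ is bounded above; after subtracting a large constant we may assume $h < 0$ on all of $U_+ = \bigcup_{n \geq 0} f^n(R_+)$, because $h(f^n(y)) = h(y) - n$ decreases as we move into the end.

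Define $\tilde U^+ := \{(x, t) \in \mathcal{U}_+ \times \R : t > h(x)\} \cup \mathcal{U}_{+\infty}$. The equivariance of $h$ makes $\tilde U^+$ invariant under $F$ (if $t > h(x)$ then $t - 1 > h(x) - 1 = h(f(x))$), so it descends to an open subset $U^+ := \pi(\tilde U^+) \subset \M_f$ containing $\partial_+ M$. The graph $\{(x, h(x)) : x \in \mathcal{U}_+\}$ is likewise $F$-invariant and descends to a smoothly embedded surface $\Sigma_U \subset M_f$ for which the quotient restricts to a diffeomorphism $\Sigma_+ \xrightarrow{\sim} \Sigma_U$. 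The vertical semi-flow $\hat\varphi$ then provides a diffeomorphism $\Sigma_U \times [0, \infty] \xrightarrow{\sim} U^+$ sending $\Sigma_U \times \{\infty\}$ to $\partial_+ M$, exhibiting $U^+$ as a product foliated by arcs of $\eL$. Generic choice of $h$ makes $\Sigma_U$ transverse to each prefiber $S_t$ (since $\Sigma_U \cap S_t$ is recorded by a regular level set of $h$) and everywhere transverse to $\eL$ (since $\Sigma_U$ is a graph over $\mathcal{U}_+$). Because $h|_{U_+} < 0$, the intersection $S \cap U^+$ pulls back to $\{(x, 0) : h(x) < 0\}$, which contains $U_+ \times \{0\}$, giving $U_+ \subset S \cap U^+$.

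For the pure case, $f$ preserves each end, so $\mathcal{U}_+ = \bigsqcup_\epsilon \mathcal{U}_\epsilon$ decomposes into $f$-invariant components. On each component, prescribe $h$ to equal $0$ on the outer juncture $j_+ \cap \mathcal{U}_\epsilon$ and $-1$ on the inner juncture $f(j_+) \cap \mathcal{U}_\epsilon$, extended monotonically across $R_+ \cap \mathcal{U}_\epsilon$ and propagated to $\mathcal{U}_\epsilon$ by equivariance. Using the non-strict variant $\tilde U^+ = \{(x, t) : t \geq h(x)\} \cup \mathcal{U}_{+\infty}$, we get the identification $S \cap U^+ = \{x \in \mathcal{U}_+ : h(x) \leq 0\} = U_+$ exactly (reading the lemma's second assertion as $U_+ = S \cap U^+$).

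The main obstacle is the pure-case refinement: arranging an exact level-set match $\{h \leq 0\} = U_+$ requires tailoring $h$ on each $f$-invariant end-component independently, which is precisely where the purity hypothesis enters. In the non-pure case, $f$ permutes components of $U_+$, coupling the prescribed values on outer and inner junctures across different ends and generally preventing such exact boundary matching; the inclusion $U_+ \subset S \cap U^+$ remains, but equality need not. A secondary technicality, achieved by generic perturbation of $h$ within its equivariance class, is the joint transversality of $\Sigma_U$ to $\eL$ and to every leaf of $\mathcal{F}$ in a neighborhood of $\partial_+ M$.
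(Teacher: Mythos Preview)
Your construction is essentially the paper's, recast in the language of height functions: the paper ``uses $\varphi_M$ to carry $R_0 = U_+ \setminus f(U_+)$ to a surface $R$ bounded by $j_0$ and $F(j_0)$,'' which is exactly the graph over $R_0$ of a function interpolating from $0$ on $j_+$ to $-1$ on $f(j_+)$, extended equivariantly --- i.e.\ your $h$. For the non-pure case the paper proceeds end-orbit by end-orbit, replacing $j_+$ by a juncture $j_{\mathcal O}$ in a single escaping set $\mathcal U_0$ (chosen far enough in so that its $f$-translates dominate the original $j_+$) and repeating the construction with $F^p$; your treatment via a single global equivariant $h$, shifted by a constant so that $h|_{U_+}<0$, is a cleaner way to package the same idea.

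One caution: your claim that a generic perturbation of $h$ makes $\Sigma_U$ transverse to \emph{every} leaf of $\mathcal F$ is not correct. The graph of $h$ is tangent to the horizontal leaf $S_{h(x)}$ precisely at critical points of $h$, and since a closed surface of genus $\geq 2$ admits no submersion to $S^1$, such critical points cannot be perturbed away; equivalently, simultaneous transversality to both $\eL$ and $\mathcal F$ would induce a nonsingular line field on $\Sigma_U$, contradicting Poincar\'e--Hopf. The paper's own proof only asserts that $R$ is ``transverse to the flow'' and never verifies transversality to $\mathcal F$ either, so this is a wrinkle in the stated definition of spiraling neighborhood rather than a defect in your argument --- but you should not claim more than the paper does here.
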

Note that the analogous statement holds for a negative juncture.
\begin{proof} 
	If $f$ is pure, consider the surface $R_0 = U_+ - f(U_+) \subset S_0$. Use $\varphi_M$ to carry $R_0$ to a surface $R$ which is transverse to the flow, and bounded by the multiloops $j_0$ and $F(j_0)$. Observe that under the quotient map, $R$ is a closed surface which is transverse to the flow, homeomorphic to $\partial_+ M$, and $M \fatbslash R$, denoting $M$ ``cut along" $R$, has a product component foliated by $\eL$ and bounded by $\partial_+ M$ and $R$, hence is a spiraling neighborhood.
	
	Now, more generally, Let $\mathcal{O}= \epsilon_0, \ldots \epsilon_{p-1}$ be the end-orbit of period $p$ corresponding to $\Sigma$, with corresponding escaping sets $\mathcal{U}_i$. Consider the multiloop $J_0 =\bigcup f^{p-i}(j_i)$ of ${U}_0$. Let $j_\calO$ be a separating multiloop of $\mathcal{U}_0$ which is to the ``left" of $J_0$, i.e. the nesting neighborhood defined by $j_\calO$ contains each of the nesting neighborhoods defined by $f^{p-i}(j_i)$.  
	
	We repeat the same process above with $j_\cal{O}$, finding a surface $R$ of $\tilde M_\infty$ which is transverse to $\eL$ and bounded by $j_\calO$ and $F^p(j_\calO)$. Again, project $R$ to its quotient in $M$ so that there is product component of $M-R$ which forms a spiraling neighborhood $U^+$. Now, since $j_\calO$ was chosen to the ``left" of $J_0$,  we have $U_+ \subset U^+$.
\end{proof}

\subsubsection{Junctures at infinity}
 Given a juncture $j \subset S_0$ which satisfies the juncture distinction property, we define its \textit{juncture annulus} $A_{j}$ as the union of compact $\pi_1$-injective, embedded annuli:
 \[A_{j}  =
 \begin{cases}
 \{\varphi_M(j, t) \,| \,t \in [0,\infty] \}, \text{when } j  \text{ is a positive juncture}\\
 \{\varphi_M(j, t) \,| \,t \in [-\infty, t) \}, \text{when } j  \text{ is a negative juncture}
 \end{cases}\]

 We let $j_\infty$ denote $A_j \cap \partial M$, the boundary of the juncture annulus ``at'' infinity. For a core $C$, we let $\partial_\infty C$ denote the multiloop of $\partial M$ formed by the union $$\partial_\infty C = (\partial_+C)_{\infty} \cup (\partial_-C)_{\infty}.$$
Using \Cref{product}, the next observation notes that we can produce a juncture annulus and juncture orbit from particular loops of $\Sigma$.
\begin{observation}\label{annuloops}
	Suppose that $j_\infty \subset \partial M$ is the boundary of a juncture annulus. Let $\gamma$ be a simple closed loop of $\Sigma -j_\infty \subset \partial M$. The flow lines terminating in $\gamma$ form an $\pi_1$-injective annulus ${A}_\gamma \cong \gamma \times (-\infty, \infty]$ (or  $\gamma \times [-\infty, \infty)$) which meets $S_0$ in a multi-loop $J_\gamma$ which is the orbit of an escaping loop of $f$.
\end{observation}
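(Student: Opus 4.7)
The plan is to work in the partial compactification $\tilde M_\infty=S\times\R\sqcup\mathcal{U}_{+\infty}\sqcup\mathcal{U}_{-\infty}$, in which $\varphi_M$ is vertical translation and $\M_f=\tilde M_\infty/\langle F\rangle$ (\Cref{cpctification}). Without loss of generality assume $j$ is a positive juncture, so $\gamma$ lies on the component $\Sigma$ of $\partial_+M$ containing $j_\infty$. I will construct a single half-open vertical cylinder $C\subset\tilde M_\infty$ whose image under the quotient is $A_\gamma$, and read off $A_\gamma\cap S_0$ directly from $C$.

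The key preliminary step is that $\gamma$ lifts to a disjoint union of simple \emph{loops} (not lines) under the infinite cyclic cover $p:\mathcal{U}_{+\infty}\to\Sigma$ with deck group $\langle F\rangle$. Identifying $\mathcal{U}_{+\infty}\cong\mathcal{U}_+$, the preimage of $j_\infty$ is the juncture orbit $J^+=\bigcup_n f^n(j)$, and the ``tight annulus'' $R=U_+\setminus f(U_+)$ is a fundamental domain for the cover whose boundary $\partial R\subset J^+$ projects to $j_\infty$. Hence cutting $\Sigma$ along $j_\infty$ unrolls the cover, and $[j_\infty]$ is Poincar\'e dual to the classifying surjection $\phi:\pi_1(\Sigma)\to\Z$. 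Since $\gamma\cap j_\infty=\emptyset$ the algebraic intersection $[\gamma]\cdot[j_\infty]$ vanishes, so $\phi([\gamma])=0$ and $\gamma$ lifts to loops; choose a lift $\tilde\gamma$ lying in a single translate $f^{n_0}(R)$. Under $\mathcal{U}_{+\infty}\cong\mathcal{U}_+$ this corresponds to a simple positively escaping loop $\gamma_0\subset\mathcal{U}_+$, and since $\gamma_0\to\gamma$ is a degree-one cover, the free $\langle f\rangle$-action on $\mathcal{U}_+$ (\Cref{boundarysurfaces}) ensures the $f$-translates $\{f^n(\gamma_0)\}_{n\in\Z}$ are pairwise distinct; the covering-map property then makes them pairwise disjoint, forming the full preimage of $\gamma$.

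Now set $C=\gamma_0\times(-\infty,\infty]\subset\tilde M_\infty$, the union of vertical flow lines terminating at $\tilde\gamma\times\{\infty\}\subset\mathcal{U}_{+\infty}$. Since $F^n(C)=f^n(\gamma_0)\times(-\infty,\infty]$ and $\{f^n(\gamma_0)\}$ is a disjoint multi-loop, the $\langle F\rangle$-translates of $C$ are pairwise disjoint, so $\pi|_C$ is an embedding onto $A_\gamma\cong\gamma\times(-\infty,\infty]$. The $\pi_1$-injectivity of $A_\gamma\hookrightarrow\M_f$ follows from incompressibility of $\partial\M_f$, which prevents any nonzero power of $[\gamma]\in\pi_1(A_\gamma)$ from dying in $\pi_1(\M_f)$. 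Finally, the preimage of $S_0$ in $\tilde M_\infty$ is $\bigsqcup_n S\times\{n\}$, which meets $C$ in $\bigsqcup_n \gamma_0\times\{n\}$; since $F^n$ identifies $\gamma_0\times\{n\}$ with $f^n(\gamma_0)\times\{0\}\subset S_0$, we conclude $A_\gamma\cap S_0=\bigcup_n f^n(\gamma_0)=J_\gamma$, the orbit of the escaping loop $\gamma_0$. The main technical hurdle will be the Poincar\'e-duality identification of $[j_\infty]$ with the dual class of the cover, i.e.\ verifying that $R=U_+\setminus f(U_+)$ genuinely descends to a faithful unrolling of $\Sigma$ along $j_\infty$.
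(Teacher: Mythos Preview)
Your argument is correct and supplies the details that the paper omits. The paper does not give a proof of this observation at all; it simply introduces it with the phrase ``Using \Cref{product}, the next observation notes that\ldots'' and leaves the verification to the reader. Your route via the partial compactification $\tilde M_\infty$ and the infinite cyclic cover $\mathcal{U}_+\to\Sigma$ is exactly the natural way to make this rigorous, and it is compatible with the paper's gesture toward the product structure: in $\tilde M_\infty$ the flow is literally vertical, which is what makes your cylinder $C=\gamma_0\times(-\infty,\infty]$ so transparent.

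Two small comments. First, the concern you flag in your last sentence is not really a hurdle. The identification of $[j_\infty]$ with the Poincar\'e dual of the classifying map $\phi:\pi_1(\Sigma)\to\Z$ is exactly the statement, recorded later in the paper (\Cref{constructN}), that the \emph{juncture class} $[j]\in H^1(\partial M)$ is the restriction of the fibration class of $M_f\to S^1$; restricted to a component $\Sigma\subset\partial_+M$, that fibration class is precisely the classifying map of the cyclic cover $\mathcal{U}_+\to\Sigma$. So the Poincar\'e-duality step is already implicit in the paper's setup and needs no further justification. Second, your $\pi_1$-injectivity argument tacitly assumes $\gamma$ is essential in $\Sigma$; the observation as stated does not say this, but every use of it in the paper is for essential loops, so this is harmless.
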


\subsubsection{Properly embedded cores}\label{propEmbedCore}
We say a positive or negative juncture $j$ satisfies the \textit{juncture distinction property} if for all $k\in \Z_{\neq 0}$ we have $f^k(j) \cap j = \emptyset$.  We note that for a \textit{pure} mapping class, i.e. one which acts by the identity on $\Ends(S)$, each juncture satisfies the juncture distinction property. This is more subtle when an end has period $\geq2$.

Note that this is an adjustment of \cite{CCF21}'s notion of the juncture intersection property-- their definition requires any pair of components of $\{f^k (j)\}$ to be either disjoint or coincide, while our definition disallows the latter.  We emphasize that we consider junctures as loops, i.e. subsets, and encourage the reader to temporarily break the habit of reducing a loop to its isotopy class.

\begin{corollary}\label{coresToGuts}
	Suppose that $C_0$ is a core whose junctures $\partial_+ C_0$ and $\partial_- C_0$ both satisfy the juncture intersection property. Then there is a isotopy along $\eL$ which carries $C_0$ to a properly embedded subsurface $(\bar C_0,\partial \bar C_0) \hookrightarrow (M,\partial M)$ which is transverse to $\eL$.
\end{corollary}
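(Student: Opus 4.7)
I would realize $\bar C_0$ as the image of a map $C_0 \to \bar M_f$ of the form $x \mapsto \varphi_M(x, \rho(x))$, where $\rho: C_0 \to [-\infty,\infty]$ is a smooth function supported on a small collar of $\partial C_0$, identically $0$ off this collar, and tending monotonically to $\pm\infty$ along each collar fiber as one approaches $\partial_\pm C_0$. The isotopy $H_t(x) = \varphi_M(x, t\rho(x))$ for $t\in[0,1]$ starts at the inclusion $C_0 \hookrightarrow S_0 \subset \bar M_f$, ends at $\bar C_0$, and by construction moves each point along an $\eL$-flow line. Under this parameterization the boundary $\partial_\pm C_0$ is carried to the junctures at infinity $(\partial_\pm C_0)_\infty \subset \partial_\pm M$; this is meaningful precisely because the juncture distinction property ensures that the juncture annuli $A_{\partial_\pm C_0}$ are embedded, since distinct flow lines emanating from $\partial_\pm C_0$ remain distinct in $M$ exactly when $f^k(\partial_\pm C_0) \cap \partial_\pm C_0 = \emptyset$ for all $k\neq 0$.

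Verifying transversality is straightforward. In local flow-box coordinates, $C_0 \subset S_0$ appears as a horizontal slice of a product, and $\bar C_0$ is a smooth graph over it along the vertical flow direction; such a graph is automatically transverse to the vertical foliation wherever it is smooth, and the product structure of $\eL$ near $\partial M$ supplied by \Cref{product} extends this transversality up to $(\partial_\pm C_0)_\infty$. Properness, and the containment $\partial \bar C_0 \subset \partial M$, follow from the boundary behavior of $\rho$ together with the fact that the juncture annuli meet $\partial M$ transversely in the multiloops $(\partial_\pm C_0)_\infty$.

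The main obstacle is global embeddedness of the map $x \mapsto \varphi_M(x,\rho(x))$. Two distinct points $p, q \in C_0$ collide under $H_1$ only if they lie on a common flow line of $M$, equivalently $p = f^k(q)$ for some $k \neq 0$ with $\rho(p) - \rho(q) = -k$. Outside the collar, $\rho \equiv 0$, so any such collision must be confined to the collar of $\partial_+ C_0$ or of $\partial_- C_0$. To rule these out, I would use that the ladders $U_\pm$ are tight, so that $f^k(\partial_+ C_0)$ sits deeper and deeper inside $U_+$ as $k\to+\infty$ (and is pushed entirely outside $U_+$ as $k\to-\infty$), combined with juncture distinction and compactness of $\partial_\pm C_0$ to get a uniform lower bound on the separation between $\partial_+ C_0$ and $f^k(\partial_+ C_0)$ over the finite range of $k$ for which $f^k(\partial_+ C_0)$ comes near $C_0$ at all. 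Choosing the collar thin enough to lie strictly inside every such separation zone, and letting $\rho$ grow slowly at first and then steeply only very near $\partial_\pm C_0$, precludes the equation $\rho(p) - \rho(q) = -k$ for any nonzero $k$. This injectivity step on the collar is the key technical point; once it is established, $\bar C_0$ is an embedded surface transverse to $\eL$ with $\partial \bar C_0 = \partial_\infty C_0 \subset \partial M$, as claimed.
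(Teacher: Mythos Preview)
Your proposal is correct and follows the same approach as the paper: push $\partial_\pm C_0$ along the embedded juncture annuli $A^\pm$ to $\partial M$, with the juncture distinction property guaranteeing that these annuli are embedded and meet $C_0$ only at its boundary. The paper phrases this more tersely as ``collapsing'' the annuli via a homotopy equivalence of $M$, while you construct the isotopy explicitly through the function $\rho$ and analyze the potential collisions $p = f^k(q)$ directly; the underlying geometry is identical.
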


\begin{proof}
	Notice that since the boundary of $C_0$ satisfies the juncture distinction property, we may consider the compact cylinders $A^+= {\partial_- C} \times [0, \infty]$ and $A^-= {\partial_- C}\times [-\infty, 0]$ are each disjoint from one another and intersect $C_0$ only at its boundary.  Consider the homotopy equivalence of $M$ obtained by collapsing the cylinder $A^+$ by the constant map $(x,t) \mapsto (x, \infty)$ and $A^-$ by the map $(x,t) \mapsto (x, -\infty)$. The image of $C_0$ under this homotopy equivalence is a properly embedded subsurface, and since it was obtained by collapsing flowlines of $\varphi_M$, the properly embedded surface remains transverse to $\eL$. 
	
	Observe that if the junctures of a core do not satisfy the juncture distinction property, then this process of collapsing juncture annuli will not yield an embedded surface.
\end{proof}

More generally, we'll say a properly embedded surface is a \textit{properly embedded core}, or more specifically an \textit{$\eL$-embedding} of a core $C_0$ if is positively transverse to $\eL$, and there is an (free) isotopy along the flowlines of $\eL$ which carries the surface to $C_0$.

We'll end this section with an equivalent condition for a core to meet each flow line of $\eL$. This ensures that the $\eL$-embedding of the core is a \textit{prefiber} of $\eL$, i.e. a properly embedded surface whose interior meets each flow line of $\eL$: 
\begin{proposition}\label{coreFiber}
	A core $C_0$ meets each flow line of $\eL$ if and only if the orbit $ \{f^k(C_0)\}_{k\in\Z}$ covers $S_0$. 
\end{proposition}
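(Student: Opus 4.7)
The plan rests on a single key observation: any flow line of $\eL$ in $\bar M_f$ intersects $S_0$ in precisely one $\langle f\rangle$-orbit of $S_0$. Once this correspondence is in hand, both directions of the biconditional follow by direct bookkeeping.

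The first step is to unpack the identification between flow lines and orbits. Recall $\M_f = \tilde M_\infty/\langle F\rangle$ with $F(x,t)=(f(x),t-1)$, and the suspension (semi-)flow $\varphi_M$ descends from the vertical flow $\hat\varphi((x,s),t)=(x,s+t)$. The flow line of $\eL$ through $(x,0)\in S_0$ therefore lifts to $\{(x,t): t\in\R\}\subset \tilde M_\infty$, augmented with the terminal endpoint $(x,\pm\infty)$ whenever $x\in\mathcal{U}_\pm$. A second lift $(y,0)$ lies on this lifted flow line and is $F$-equivalent to some point on it iff $(y,0)=F^k(x,t)=(f^k(x),t-k)$ for some $k$, forcing $t=k$ and $y=f^k(x)$. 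Thus the flow line through $(x,0)$ meets $S_0$ precisely at the orbit $\{f^k(x): k\in\Z\}$.

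The second step is to confirm that \emph{every} flow line of $\eL$ in $\bar M_f$ meets $S_0$; this is where boundary behavior must be handled and is where I would spend the most care. By \Cref{product}, near $\partial_+ M$ the foliation $\eL$ is modeled by the vertical semi-flow on $\partial_+ M\times(k,\infty]$, so each $x\in\partial_+ M$ is the unique terminal point of a semi-infinite flow line $\{(x,t): t>k\}\cup\{(x,\infty)\}$; tracking this flow line backward into the interior returns it to $S_0$ in finite time (with first-return map $f$, using the mapping torus structure). The same holds at $\partial_- M$. There are no flow lines contained entirely in $\partial M$, since $\partial M$ consists of the terminal fixed points of the semi-flow. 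So every flow line of $\eL$ on $\bar M_f$ does indeed cross $S_0$, and does so in a single $\langle f\rangle$-orbit by the first step.

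The biconditional then drops out. For $(\Rightarrow)$, given $x\in S_0$, the flow line through $x$ meets $C_0\subset S_0$ at some point of its $\langle f\rangle$-orbit, say $f^k(x)\in C_0$, giving $x\in f^{-k}(C_0)$. For $(\Leftarrow)$, any flow line meets $S_0$ in some orbit $\{f^k(x)\}$, and the covering hypothesis $S_0 = \bigcup_k f^k(C_0)$ provides some $k$ with $f^k(x)\in C_0$, so the flow line meets $C_0$. The main (mild) obstacle is ensuring there are no phantom flow lines confined to $\partial M$ that the core cannot possibly hit; once \Cref{product} rules this out, the rest is a tautology via the orbit correspondence.
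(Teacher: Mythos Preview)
Your proof is correct and takes essentially the same approach as the paper: both hinge on the observation that the flow line through $(x,0)\in S_0$ intersects $S_0$ precisely in the orbit $\{f^k(x):k\in\Z\}$, after which both implications are immediate. You are somewhat more explicit than the paper in verifying (via \Cref{product}) that every flow line actually meets $S_0$, which the paper's proof tacitly assumes when it writes ``let $\ell=\ell_x$''.
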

\begin{proof}
	Let $x \in S_0$ and let $\ell_x$ denote the flowline which meets $x$. Note that $\ell_x \cap S_0 = \{ f^k(x): k\in\Z\}$ and $\ell_x = \ell_{f^k(x)}$ for all $x\in S_0$ and $k\in\Z$. Now assume each leaf $\ell$ of $\eL$ meets $C_0$, then $\ell_x \cap C_0 \subseteq \{f^k (x)\}_{k\in\Z}$, implying that $x \in f^k(C_0)$ for some power $k$. Thus, $S \subset \bigcup_{k\in\Z} f^k(C_0)$.
	
	Similarly, assume that $\bigcup_{k\in\Z} f^k(C_0) = S_0$ and let $\ell = \ell_x$ be a leaf of $\eL$. Since there exists a power $k$ so that $f^k(x) \in C_0$, we have $\ell_{f^k(x)} \cap C_0 = \ell \cap C_0 \neq \emptyset$, meaning each flow line meets $C_0$. 
\end{proof}

\subsection{Geometrization of the mapping torus}
 For $M$, a compact, irreducible $3$-manifold with incompressible boundary, we say a compact subsurface $Y$ is \textit{essential} in $M$ if it is $\pi_1$-injective, properly embedded, and the inclusion $i: (Y, \partial Y) \hookrightarrow (M,\partial M)$ is not isotopic (rel. boundary) into $\partial M$.  $M$ is \textit{acylindrical} if it does not admit an essential annulus, and is \textit{atoroidal} if it does not admit an embedded, $\pi_1$-injective torus.

\begin{theorem}[Field-Kim-Leininger-Loving \cite{FKLL23}]\label{fkllhyp}
If $f$ is an atoroidal\footnote{Their theorem is stated in terms of an irreducible end-periodic homeomorphisms, but the more general atoroidal setting carries through nonetheless.} end-periodic homeomorphism, then $\bar M_f$ is a compact, irreducible, $3$-manifold with incompressible boundary. Further, if $f$ is strongly irreducible, then $\M_f$ is acylindrical.
\end{theorem}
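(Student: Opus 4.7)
The plan is to verify the four conclusions --- compactness, irreducibility, boundary incompressibility, and (under strong irreducibility) acylindricity --- in sequence, leveraging the product and spiraling structures of $\eL$ established in \Cref{product} and \Cref{spiralingNbhds} together with the basic fact that $M_f$ fibers over $S^1$ with fiber $S$.

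Compactness is immediate from \Cref{cpctification}. For irreducibility, I would observe that the partial compactification $\tilde M_\infty = S\times\R \,\sqcup\, \mathcal{U}_{+\infty} \,\sqcup\, \mathcal{U}_{-\infty}$ deformation retracts onto the prefiber $S\times \{0\}$ by flowing along the vertical semi-flow, so $\tilde M_\infty$ is a $K(\pi_1(S),1)$. Since $\tilde M_\infty \to \M_f$ is the $\langle F \rangle$-cover, $\M_f$ is aspherical; in particular $\pi_2(\M_f)=0$, and the (equivariant) sphere theorem combined with orientability shows that every embedded $2$-sphere in $\M_f$ bounds a $3$-ball. Hence $\M_f$ is irreducible.

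For incompressibility, suppose some component $\Sigma \subset \partial \M_f$ admitted a compressing disk $D$ with $\partial D = \gamma$ essential in $\Sigma$. By \Cref{spiralingNbhds}, $\Sigma$ has a spiraling collar $U^{\Sigma}$ foliated as a product by arcs of $\eL$; pushing $\gamma$ inward through $U^{\Sigma}$ produces an isotopic loop $\tilde\gamma$ on some prefiber $S_t \cong S$, lying in $\mathcal{U}_+$ or $\mathcal{U}_-$. Since $\mathcal{U}_\pm \to \Sigma_\pm$ is a regular covering by \Cref{boundarysurfaces}, $\tilde\gamma$ remains essential in $S$. But $D$ provides a null-homotopy of $\tilde\gamma$ in $M_f$, which contradicts the injectivity of the fiber inclusion $\pi_1(S) \hookrightarrow \pi_1(M_f)$.

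For acylindricity when $f$ is strongly irreducible, I would suppose toward contradiction that $A \hookrightarrow \M_f$ is an essential annulus with $\partial A = \gamma_1 \sqcup \gamma_2 \subset \partial \M_f$. After an isotopy along $\eL$ through the spiraling collars of the boundary components, \Cref{annuloops} realizes each $\gamma_i$ as the boundary at infinity of an escaping loop-orbit $J_{\gamma_i}\subset S$. Two cases arise. If $\gamma_1, \gamma_2$ lie in different components of $\partial \M_f$, say one in $\partial_+\M_f$ and one in $\partial_-\M_f$, then lifting $A$ through $\tilde M_\infty$ produces a strip whose boundary lines escape into opposite ends of $S$; projecting to a fiber gives a curve which is both positively and negatively escaping, i.e.\ a reducing curve, excluded by strong irreducibility. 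If instead $\gamma_1, \gamma_2$ lie in the same component, the corresponding escaping loops $\tilde\gamma_1, \tilde\gamma_2$ in $S$ are freely homotopic in $M_f$; using $\pi_1(S)$-injectivity and $F$-equivariance, this homotopy descends to a relation forcing a curve fixed up to isotopy by some power of $f$, i.e.\ a periodic curve --- contradicting atoroidality. Boundary-parallel annuli are excluded by hypothesis.

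The main obstacle I anticipate is the acylindricity argument, and within it the \emph{same boundary component} subcase: extracting a periodic curve from the free homotopy between two escaping loops requires delicate use of the deck action of $\langle F\rangle$ on $\pi_1(M_f)$ (in which $\pi_1(S)$ sits as a normal subgroup) and the $\langle f^p\rangle$-action on $\mathcal{U}_\pm$ at the boundary. Compressing disks admit a clean ``flow and then detect in the fiber'' argument, but for annuli one must simultaneously control two peripheral ends, and the key technical step will be pinning down the resulting loops in $\pi_1(S)$ modulo the peripheral identifications precisely enough that \Cref{mustMeetLam} and the strong irreducibility hypothesis can be invoked to rule out the resulting periodic or reducing curve.
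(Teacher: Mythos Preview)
The paper does not give its own proof of \Cref{fkllhyp}; the theorem is quoted from Field--Kim--Leininger--Loving \cite{FKLL23} and used as a black box. So there is no ``paper's proof'' to compare against, and your proposal is effectively an independent reconstruction of the cited result.

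That said, your sketch of compactness, irreducibility, and incompressibility is reasonable and in the spirit of the original argument. The acylindricity argument, however, has a genuine gap. Your case split conflates ``$\gamma_1,\gamma_2$ lie in different components of $\partial\M_f$'' with ``one lies in $\partial_+\M_f$ and one in $\partial_-\M_f$''; there may be several components on the same side, so an essential annulus could have both boundary curves in $\partial_+\M_f$ (say) yet in distinct components. More importantly, in the ``same side'' case your conclusion that one obtains a \emph{periodic curve} is not justified: the lifted annulus may instead produce a properly embedded line in $S$ invariant under a power of $f$, i.e.\ a \emph{periodic arc}. This is precisely why acylindricity requires strong irreducibility and not merely atoroidality --- the extra hypothesis ``no periodic arcs'' is what kills these annuli, and your argument as written never invokes it. To repair this you would need to analyze the intersection of the (isotoped) annulus with a fiber $S_0$ and show that the resulting essential arcs or curves in $S_0$ are permuted up to isotopy by $f$, distinguishing the periodic-curve, periodic-arc, and reducing-curve outcomes.
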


We'll briefly discuss the geometrization of $M_f$. We consider the space of hyperbolic structures as the space of discrete, faithful representations $\rho: \pi_1(M_f) \to \Isom(\H^3)$, up to conjugation, and denote this deformation space by $\AH(M_f)$.  It follows from \Cref{fkllhyp} and Thurston's Geometrization of Haken Manifolds \cite{Th86} (see \cite{Kap01}) that $M_f$ admits a hyperbolic structure. Any geometrically finite structure $s \in AH(M_f)$ yields a convex-hyperbolic structure on $\M_f$ as its compact convex core. As discussed in the introduction, when $\M_f$ is acylindrical, we let $s^*_f \in \AH(M_f)$ denote the unique metric for which the boundary of its convex core admits a totally geodesic structure \cite[Section 4]{Mc90}.

\begin{corollary}
	If $f$ is an atoroidal end-periodic homeomorphism, then $M_f$ admits a non-empty deformation space $\AH(M_f)$ of hyperbolic structures. When $f$ is strongly irreducible, there exists a unique metric $s^*_f\in \AH(M_f)$ whose convex core admits a totally geodesic boundary structure.
\end{corollary}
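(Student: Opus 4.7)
The plan is to chain together the results already stated in the excerpt: \Cref{fkllhyp} controls the topology of $\M_f$, Thurston's geometrization handles the existence of hyperbolic structures, and McMullen's theorem on acylindrical manifolds handles the uniqueness of the totally geodesic boundary structure.

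First, I would invoke \Cref{fkllhyp} to conclude that when $f$ is an atoroidal end-periodic homeomorphism, $\M_f$ is a compact, irreducible $3$-manifold with incompressible boundary. Since $f$ is atoroidal, $M_f$ (and hence $\M_f$) is atoroidal as noted in \Cref{eph}. Moreover, $\chi(\partial \M_f) = \chi(\Sigma_+ \cup \Sigma_-) < 0$ because each component of $\partial \M_f$ has negative Euler characteristic (the ends of $S$ are all non-planar). Thus $\M_f$ is a Haken manifold satisfying the hypotheses of Thurston's geometrization theorem for Haken manifolds with boundary \cite{Th86} (see also \cite[Theorem 8.4]{Kap01}). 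Consequently, the interior $M_f$ admits a complete hyperbolic structure, so $\AH(M_f)$ is non-empty.

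For the second claim, assume $f$ is strongly irreducible. By \Cref{fkllhyp}, $\M_f$ is additionally acylindrical. Now apply McMullen's theorem \cite[Section 4]{Mc90}: on a compact, irreducible, atoroidal, acylindrical $3$-manifold with incompressible boundary, there exists a unique hyperbolic structure (up to isometry) whose convex core has totally geodesic boundary. This uniquely determines a marked hyperbolic structure $s^*_f \in \AH(M_f)$ as claimed.

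There is no genuine obstacle here; the corollary is essentially a bookkeeping consequence of the preceding theorems, and the only thing requiring care is verifying that the hypotheses of each black-box theorem (Haken-ness, incompressibility of $\partial \M_f$, acylindricity, negativity of $\chi(\partial \M_f)$) are met by $\M_f$ in each of the two cases. All of these are either stated directly in \Cref{fkllhyp} or follow from the standing assumption that $S$ has finitely many non-planar ends together with the discussion in \Cref{mfconstruct}.
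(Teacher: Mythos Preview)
Your proposal is correct and follows essentially the same route as the paper: the paper does not give a formal proof of this corollary but instead derives it in the paragraph immediately preceding it, invoking \Cref{fkllhyp} together with Thurston's Geometrization of Haken Manifolds for existence, and \cite[Section 4]{Mc90} for the uniqueness of the totally geodesic boundary structure in the acylindrical case. Your write-up is slightly more explicit about checking the hypotheses (Haken, $\chi(\partial \M_f)<0$), but the argument is the same.
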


 By the Ahlfors-Bers' isomorphism theorem (see \cite{Kap01} for reference), the interior of $\AH(M_f)$, represented by the geometrically finite hyperbolic structures on $M_f$, is parameterized by $\Teich(\partial M)$. 

\section{Setup and plan of the paper}\label{outline}

\subsection{Setup}
Here we fix notation for use in Sections 3-7. Let $f$ be an atoroidal end-periodic homeomorphism of $S$, a boundary-less surface of infinite type with finitely many ends, all non-planar. We assume that $f$ is not a translation. We fix our favorite hyperbolic structure $X$ on $S$ and let $\Lambda^+$, $\Lambda^-$ denote the positive and negative Handel-Miller laminations of $f$. We let $\Cmin$ denote a fixed choice of minimal core of $f$. A subsurface $Y \subset S$ will always denote a compact, connected essential subsurface which meets both $\Lambda^+$ and $\Lambda^-$.

Let $M_f$ be its (boundary-less) mapping torus, and $M$ be the compactification as a $3$-manifold with boundary as in \Cref{cpctification}. We consider $S=S_0$ as a fixed leaf of the depth-one foliation $\mathcal{F}$ defined in the previous section. We let $\eL$ denote the $1$-dimensional foliation of $M$ by flow lines of the suspension semi-flow $\varphi_M$.

When $f$ is strongly irreducible, we let $(M_f, s^*)$ denote the unique structure whose convex core has totally geodesic boundary.

\subsection{Overview}
Throughout the paper we will examine $M_f$ through various perspectives. We are interested in $M_f$ as a fibered manifold, and although any fiber of $M_f$ must be of infinite type, we uncover the geometric structure of the manifold appealing to the finite-type setting.  
Our primary tools come from recent work of Landry-Minsky-Taylor \cite{LMT23} who embed $M$ into a closed, fibered hyperbolic manifold via their  \emph{$h$-double} construction. They produce a closed manifold $N=N(M,h)$ by identifying two copies of the compact manifold with boundary $M$ via a component-wise homeomorphism $h: \partial M \to \partial M$. The authors show that for the right choice of $h$, the resulting $h$-double $N=N(M,h)$ is fibered and hyperbolic. This allows for the use of tools for closed, fibered, hyperbolic manifolds to apply back to $M_f$.

In the language of Landry-Minsky-Taylor, the $h$-double construction provides a \emph{spun pseudo-Anosov package}  $(N, \mathcal{F}, \phi, M)$: a closed hyperbolic fibered $3$-manifold $N$; a depth-one foliation $\mathcal{F}$ of $N$ which is transverse to $\varphi$,  a (dynamic blown up of) a circular pseudo-Anosov flow on $N$; and a compact manifold with boundary $M$ obtained from $N$ by cutting along the depth-zero leaves of $\mathcal{F}$.

Explicitly, $S$ lives as a leaf of $\mathcal{F}$, and the \textit{spun pseudo-Anosov} representative $f^\flat$, determined by the spun pseudo-Anosov package, is the first-return map on $S$ under the circular flow $\varphi$ which is isotopic to the original end-periodic homeomorphism. 

A significant chunk of this paper is devoted to proving the following lengthy proposition. 
In brief, this proposition facilitates a careful application of Minsky's \Cref{minsky} to $N$ so that we may coherently translate from the pseudo-Anosov lamination data and the geometry of $N$ to the end-periodic Handel-Miller lamination data and geometry of $M$. 

\begin{proposition}[A core and gluing package]\label{spf}
Let $C_0$ be a core of $f$ with $|\chi(C_0)| \leq \capgen(f)$ and $h:\partial M \to \partial M$ be a gluing homeomorphism so that: 
\begin{enumerate}
	\item $C_0$ is an $f$-extended core and meets each flow line of $\eL$, 
	\item $C_0$ admits an $\eL$-embedding $(C, \partial C) \hookrightarrow (M, \partial M)$,
	\item $h$ preserves and reverses the orientation of each component of $\partial C$, and
	\item $N=N(M,h)$ is atoroidal.
\end{enumerate}

Then:
\begin{enumeri}
	\item\label{spf:Ngenus} The closed surface $\hat F=\hat F(C,h)$ obtained as the $h$-double of $C$ is isotopic to a fiber $F$ of $N$ of genus $g(f) \leq \capgen(F)$. 
	
	\item\label{spf:lamicom}  Let $\lambda^+, \lambda^-$ denote the unstable and stable laminations of the pseudo-Anosov monodromy on $F$. Then, given a subsurface $Y$ with $d_Y(\Lambda^+, \Lambda^-) \geq 3$ there exists a subsurface $W \subset F$,  isotopic to $Y$ in $M$, so that $$d_Y(\Lambda^+, \Lambda^-) \leq d_W(\lambda^+, \lambda^-) + 2.$$
\end{enumeri} 
\end{proposition}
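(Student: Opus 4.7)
The plan is to treat parts (i) and (ii) separately, working inside the spun pseudo-Anosov package $(N,\mathcal{F},\varphi,M)$ produced by the $h$-double construction. All four hypotheses on $(C_0,h)$ will be used: (1) and (2) guarantee that $C$ is a genuine prefiber of the semi-flow on $M$; (3) is what makes the doubled surface $\hat F$ close up and remain transverse to the extended flow; and (4) is what lets us invoke the spun pseudo-Anosov package of Landry-Minsky-Taylor to conclude that $N$ is hyperbolic and fibered with pseudo-Anosov laminations $\lambda^\pm$.

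For (i), by \Cref{coreFiber} the $\eL$-embedding $(C,\partial C) \hookrightarrow (M,\partial M)$ is transverse to $\eL$ and meets every leaf of $\eL$. Since $h$ preserves each component of $\partial C$ with reversed orientation, the two copies of $C$ in the two copies of $M$ glue coherently into a closed surface $\hat F\subset N$; transversality passes through the gluing because the two semi-flows $\varphi_M$ combine into the circular flow $\varphi$ on $N$. Thus $\hat F$ is positively transverse to $\varphi$ and meets every flow line, so the standard criterion (a closed surface transverse to and crossing every orbit of a circular flow is isotopic to a fiber) produces the desired fiber $F$ of $N$ with $\hat F\simeq F$. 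The genus bound comes from $\chi(\hat F)=2\chi(C)\geq -2\,\capgen(f)$ together with the doubling formula, combined with the hypothesis $|\chi(C_0)|\leq\capgen(f)$.

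For (ii), my strategy is to flow $Y$ into the $h$-double of $C$ along $\eL$ and then compare projections in a common annular cover. Since $C$ meets every flow line and is embedded, the compact subsurface $Y\subset S_0$ admits a flow-isotopy inside $M$ to a subsurface $Y'\subset \hat F$; letting $W\subset F$ be the image of $Y'$ under the isotopy $\hat F \simeq F$ from (i), $W$ is isotopic to $Y$ in $M$, so $\pi_1(W)$ and $\pi_1(Y)$ coincide as subgroups of $\pi_1(M)$ and determine a common annular cover $\tilde M_W = \tilde M_Y$ in which both $\Lambda^\pm$ (lifted from $S$) and $\lambda^\pm$ (lifted from $F$) are well-defined. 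The comparison itself rests on the link provided by the spun pseudo-Anosov package: leaves of $\lambda^\pm$, pushed along $\varphi$ into the depth-one leaf $S_0$, give multi-loops whose geodesic representatives converge in the Hausdorff topology to $\Lambda^\pm$ (this is the juncture-to-lamination convergence recalled in \Cref{HMlam}, once one identifies the flow-pushes with positive and negative juncture orbits). In the common cover, each lift of a leaf of $\lambda^\pm$ therefore sits within a uniformly bounded neighborhood of a lift of a leaf of $\Lambda^\pm$, so the corresponding projection vertices in $\mathcal{AC}(W)=\mathcal{AC}(Y)$ differ by at most one. Applying this on each side and using \Cref{projcalculations}(i) yields $d_Y(\Lambda^+,\Lambda^-) \leq d_W(\lambda^+,\lambda^-) + 2$; the hypothesis $d_Y(\Lambda^+,\Lambda^-)\geq 3$ is what guarantees the relevant projections are non-empty so the comparison is non-vacuous (via \Cref{projcalculations}(v), in particular).

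The hard part will be step (ii), and specifically the careful bookkeeping behind the $+2$. There are two places where slack can accumulate: (a) the identification of $\pi_Y$ with $\pi_W$ via the flow isotopy, where one must verify that the subgroup identification is compatible with the annular cover and with the geometric realizations of the two laminations; and (b) the Hausdorff approximation between $\lambda^\pm$-leaves flowed into $S$ and genuine $\Lambda^\pm$-leaves, where the standard bounded-image lemma for subsurface projections of nearby curves must be invoked in the infinite-type cover. The remaining steps---the fiber recognition in (i) and the flow-isotopy of $Y$ into $\hat F$---are comparatively formal once the spun pseudo-Anosov package is in hand.
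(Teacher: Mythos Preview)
Your treatment of part (i) is essentially the paper's argument: the doubled surface is transverse to the extended flow, meets every orbit, and the genus bound follows from the Euler characteristic of $C$.

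For part (ii), however, there are two genuine gaps.

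\textbf{Getting $Y$ into the core.} You assert that ``since $C$ meets every flow line and is embedded, the compact subsurface $Y\subset S_0$ admits a flow-isotopy inside $M$ to a subsurface $Y'\subset \hat F$.'' This does not follow. That $C$ meets every flow line only says each \emph{point} of $Y$ lies on an orbit that eventually hits $C$; it does not produce a single isotopy carrying $Y$ homeomorphically onto a subsurface of $C$. Different points of $Y$ may need to be flowed by different integer amounts (i.e.\ different powers of $f$), and there is no reason these assemble into an embedding. The paper handles this by first finding an integer $n$ with $f^n(Y)\subset C_0$ (\Cref{lemma:corelemma}), and only then using the $\eL$-embedding $C_0\to C$ to push $f^n(Y)$ into the fiber. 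That lemma is precisely where the \emph{$f$-extended} hypothesis on $C_0$ and the lower bound on $d_Y(\Lambda^+,\Lambda^-)$ are used; your argument invokes neither, which is a red flag.

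\textbf{Comparing the laminations.} Your mechanism---flow leaves of $\lambda^\pm$ from $F$ into $S$ and claim Hausdorff convergence to $\Lambda^\pm$ via ``juncture-to-lamination convergence''---conflates two different things. The convergence recorded in \Cref{HMlam} is about iterates of \emph{junctures} limiting on $\Lambda^\pm$; it says nothing about flowed pseudo-Anosov leaves. The paper's route is sharper and avoids any approximation: it passes through the \emph{spun pseudo-Anosov invariant sublaminations} $\Lambda^\pm_\flat$ of Landry--Minsky--Taylor and uses their theorem that $\partial^2(\tilde\Lambda^\pm_\flat)=\partial^2(\tilde\Lambda^\pm)$ exactly (\Cref{HMcomparison}). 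This gives $\pi_Y(\Lambda^\pm)=\pi_Y(\Lambda^\pm_\flat)$ on the nose. The $+2$ then arises because $\Lambda^\pm_\flat$ is obtained from the singular foliation $\mathcal{W}^{u/s}$ by deleting leaves and half-leaves, so $d_Y(\Lambda^\pm_\flat,\mathcal{W}^{u/s})\le 1$; finally $\pi_Y(\mathcal{W}^{u/s})=\pi_Y(\mathcal{W}^{u/s}_\flat)=\pi_Y(\lambda^\pm)$ since the dynamic blow-up only collapses finite trees. Your ``bounded-image lemma for nearby curves'' would need to be made precise in this singular-foliation context, and as written it is not clear it yields the same additive constant.
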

Here is a heuristic of the proof of \Cref{thma}, assuming we've proven \Cref{spf}, and have provided a core $C$ and $h:\partial C \to \partial C$ which satisfy its hypotheses.  We use \Cref{spf}\ref{spf:Ngenus} to ensure that the topology of the fiber of $N$ is controlled by the $\capgen(f)$. 
With \Cref{spf}\ref{spf:lamicom}, we may directly compare the subsurface projection data $d_Y(\Lambda^+,\Lambda^-)$ and $d_W(\lambda^+,\lambda^-)$ between a subsurface $Y\subset S$ and a subsurface $W\subset F$ isotopic to $Y$ in $N$ so that $\ell_N(\partial Y) = \ell_N(\partial W)$. After proving \Cref{spf}, we apply Minsky's \Cref{minsky} to $N$ and conclude the argument by noting there exists a hyperbolic structure $s \in \AH(M_f)$ so that $(M_f, s) \hookrightarrow N$ is an isometric immersion. This provides a bound $\ell_s(\partial Y) \leq \ell_N(\partial Y)$, and the proof of the theorem follows. 

\subsection{Plan of the paper}
In Sections \ref{constructN} and \ref{subs} we record the proof of \Cref{spf} \ref{spf:Ngenus} and \ref{spf:lamicom}, assuming we have an abstract core $C$ and gluing involution which satisfies the hypotheses of the proposition. We use tools of Landry-Minsky-Taylor throughout. The proof of \Cref{spf} \ref{spf:Ngenus} is a fairly direct application of Landry-Minsky-Taylor, while that of \Cref{spf} \ref{spf:lamicom} requires more tinkering before we may apply their work. In order to translate from a subsurface $Y\subset S$ to a subsurface $W\subset F$, we will first need to toy with juncture and subsurface projection distances on $S$. Essentially, we'll  show that any subsurface which realizes sufficiently large distance $d_Y(\Lambda^+,\Lambda^-)$ can be translated under $f$ to be contained in our choice of core, and therefore, by \Cref{spf}\ref{spf:Ngenus} in the constructed fiber of $N$. Finally, we will use \Cref{HMcomparison} of Landry-Minsky-Taylor which directly compares the Handel-Miller laminations with the \textit{positive and negative invariant singular laminations} of the spun pseudo-Anosov representative $f^\flat$ of $f$.

 The bulk of the work is in \Cref{whichCore}, where we make an explicit choice of core $C$ and gluing homeomorphism $h$ which satisfy the hypotheses of \Cref{spf}. We briefly discuss the complications that arise when doing this. To build an atoroidal $N(M,h)$, we must carefully choose $h$ so that no two curves which bound essential properly embedded annuli of $M$ are identified under the gluing. However, in order to produce a closed surface, we must balance this task with ensuring that $h$ acts preserves the boundary of our chosen core, as apriori, such cores may bound essential annuli.

 If $f$ is strongly irreducible, then for certain homemorphisms, as when $\partial M$ consists of genus-$2$ surfaces in \Cref{thmb}, or the homeomorphisms constructed in \Cref{pinchint} and \Cref{pinch}, we show that there is a $h$ for which $(\M_f, s^*_f) \hookrightarrow N(M,h)$ isometrically embeds into $N(M,h)$. It is for these homeomorphisms that \Cref{conj:tgmetric} holds. 

We end the paper with \Cref{examples} which describes a few examples of families of end-periodic and strongly irreducible end-periodic homeomorphisms, both to serve as a reference to the reader, and to use in application of \Cref{thmb} which produces a proof of \Cref{thmc}. We'll provide plenty of examples which one may apply \Cref{thmb} to get short curves in totally geodesic structures. Namely, for a fixed core $C$ of a handle shift $\rho$ on the ladder surface, we construct families of strongly irreducible end-periodic homeomorphisms $\{f_n\}$ with uniformly bounded capacity so that $$\lim_{n\to\infty} d_C(\Lambda_n^+,\Lambda_n^-) \to \infty,$$
hence $\ell_{s^*_n}(\partial C) \to 0$\ in the respective totally geodesic boundary metrics. 

\section{Construction of N}\label{constructN}
We begin this section by describing the $h$-double construction of Landry-Minsky-Taylor which produces a closed, fibered, 3-manifold $N$ in which $M$ embeds. The homotopy equivalence of surfaces obtained from \Cref{propEmbedCore} is described by \cite[Section 3]{LMT23} in terms of spiraling neighborhoods and \textit{juncture classes} of $\partial M$. It will be helpful to compare their terminology with ours. The fibration $M_f \to S^1$ induces a cohomology class $[j] \in H^1(\partial M)$ known as the \textit{juncture class} of $f$, which is the pullback of the fibration class $H^1(M) \to \Z$ under the inclusion map $\partial M \hookrightarrow M$. Indeed, for any properly embedded core $C\subset S$, the first cohomology class $[\partial C] \in H^1 (\partial M)$ corresponding to the algebraic intersection form is precisely the juncture class $[j]$ of $f$.
\subsection{The doubling construction}

Let the tuple $(M, \mathcal{F}, \eL)$ denote $M$, the oriented $3$-manifold with boundary;  $\mathcal{F}$, the co-oriented $2$-dimensional foliation; and $\eL$, the oriented $1$-dimensional foliation induced by the suspension flow of $f$. Let $(M^\downarrow, \mathcal{F}^\downarrow, \eL^\downarrow)$, denote the same with the opposite (co-)orientations in each factor. Let $\sigma: M \to M^\downarrow$ denote the identity map, which by construction reverses the (co-)orientations of $M$, $\mathcal{F}$ and $\eL$.

Given a component-wise orientation-preserving homeomorphism $h: \partial M \to \partial M$, the \textit{$h$-double of $M$} is the closed manifold defined by $N(M,h) = M \cup_{\sigma \circ h} M^\downarrow$, i.e. by gluing $\partial M$ and $\partial M ^\downarrow$ by the composition $(\sigma \circ h )|_{\partial M}$. Note that $N(M,h)$ inherits its orientation from those of $M$ and $M^\downarrow$, and admits a $1$-dimensional foliation obtained from the union $\eL \cup \eL^\downarrow$, and depth-one foliation obtained by $\mathcal{F} \cup \mathcal{F}^\downarrow$.  We will denote the extensions of $\eL$ and $\mathcal{F}$ by the same notation in the context of the closed manifold. Suppose $W$ is a properly embedded surface $(W, \partial W) \hookrightarrow (M,\partial M)$, and $h(\partial W) = \partial W$. Then for $W^\downarrow = \sigma(W)$, the union $W \cup W^\downarrow$ is a closed surface of $N(M,h)$ which we will denote by $\hat F=\hat F(W,h)$.

\begin{figure}
	\includegraphics[width=.55\textwidth]{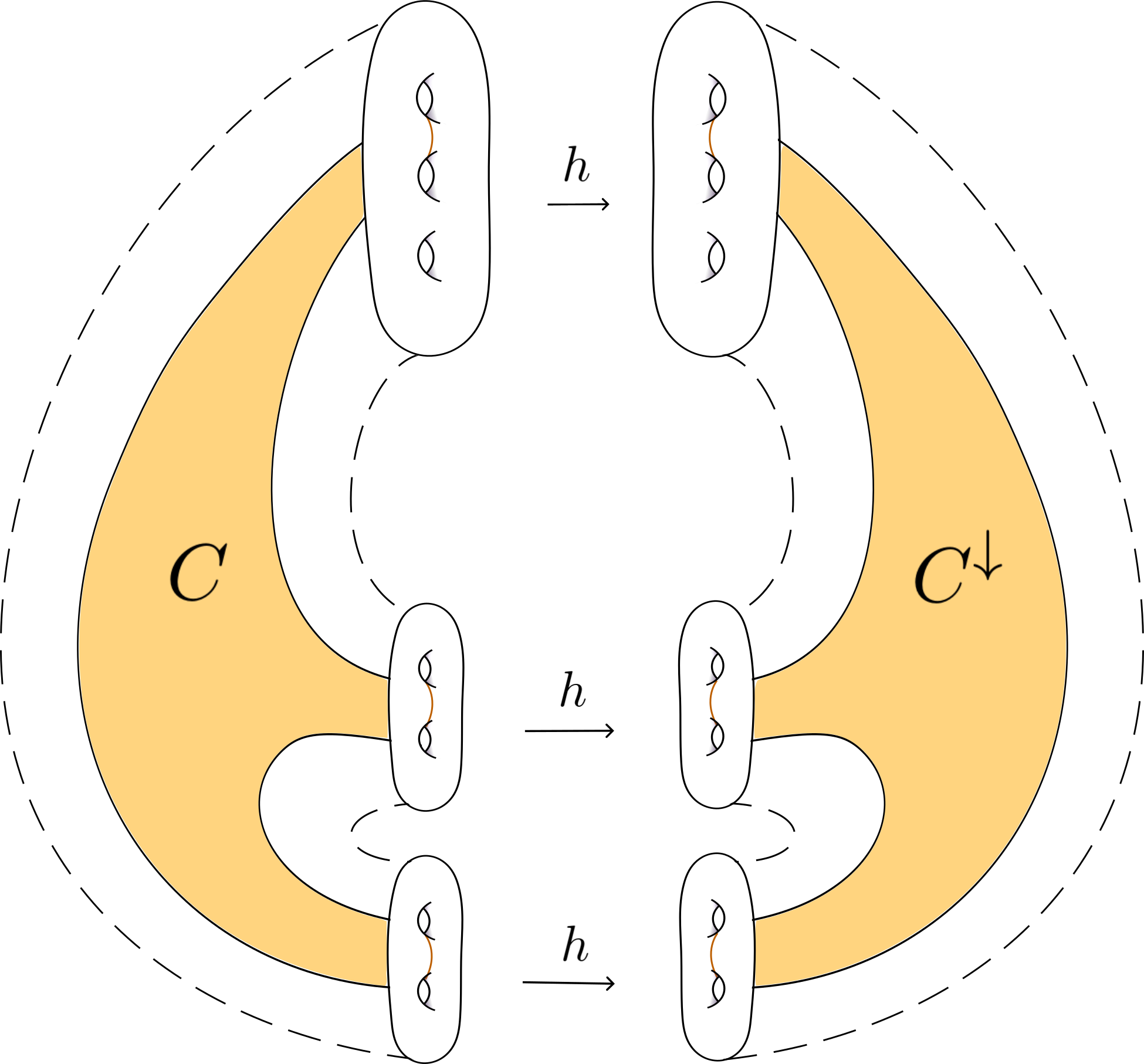}
	\caption{A cartoon of $N =N(M,h)$, the $h$-double of $M$. The closed surface $\hat F = C \cup_h C^\downarrow$ is formed by the double of a properly embedded core $C$.}
\end{figure}

 We rephrase a proposition of Landry-Minsky-Taylor below. Their proposition is stated in much higher generality; they only require $h$ to preserve the juncture class $[j]$, and do not specify the topology of the fiber.

\begin{corollary}\label{lmtfiber}{(Landry-Minsky-Taylor \cite[Prop. 3.9]{LMT23})}
	Let $C$ be a properly embedded core which meets each flow line of $\eL$. Let $h: \partial M \to \partial M$ be a homeomorphism which preserves $\partial C$, while reversing the orientation of component. Then the surface $\hat F = \hat F(C,h)$ is isotopic to a fiber of $N$.
\end{corollary}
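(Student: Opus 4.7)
The plan is to apply the more general Landry–Minsky–Taylor Proposition 3.9, of which this corollary is a specialization. LMT's proposition produces, from a prefiber of $\eL$ in $M$ together with a gluing homeomorphism $h \colon \partial M \to \partial M$ preserving the juncture class $[j] \in H^1(\partial M)$, a surface in the $h$-double $N$ isotopic to a fiber. The proof therefore reduces to verifying these two input hypotheses in the setting at hand.

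First, I will check that $C$ is an $\eL$-prefiber. By the definition recalled in \Cref{propEmbedCore}, a properly embedded core is positively transverse to $\eL$, so the only remaining condition is the cross-section property of meeting every flow line. That is exactly the hypothesis, which via \Cref{coreFiber} (equivalently, $\{f^k(C_0)\}$ covers $S_0$) certifies the prefiber condition.

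Second, I will verify that $h$ preserves the juncture class. Recall from the preceding discussion that $[j] = [\partial C] \in H^1(\partial M)$ via the algebraic intersection pairing. Since $h$ preserves $\partial C$ setwise but reverses orientation on each of its components, a direct intersection-number computation shows that $h^*[\partial C] = -[\partial C]$. This is precisely the sign demanded by the $h$-double construction: the gluing map is $\sigma \circ h$, and the orientation reversal introduced by $\sigma \colon M \to M^\downarrow$ contributes a second sign, so that $(\sigma \circ h)^*$ matches the juncture class of $M$ with that of $M^\downarrow$ across $\partial M$. This sign-matching is what LMT's statement "$h$ preserves $[j]$" amounts to in this setting.

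Having verified both hypotheses, I invoke LMT Proposition 3.9 to conclude that $\hat F = \hat F(C,h)$ is isotopic to a fiber of $N$. The main technical obstacle is the orientation bookkeeping in the second step: one must confirm that the orientation reversal imposed by $h$ on each component of $\partial C$, combined with the orientation reversal of $\sigma$, is consistent both with the formation of $\hat F$ as a coherently oriented closed embedded surface in $N$ and with the compatible gluing of fibration cohomology classes on $M$ and $M^\downarrow$. Both follow from the same sign cancellation, but care is needed to line up LMT's conventions with ours.
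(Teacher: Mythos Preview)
Your proposal is correct and follows essentially the same approach as the paper. The paper treats this corollary as a direct citation of LMT Proposition~3.9, and in the ensuing proof of \Cref{spf}\ref{spf:Ngenus} it unpacks exactly the two verifications you describe: that the properly embedded core is a prefiber of $\eL$, and that the orientation-reversing hypothesis on $h$ yields the sign cancellation $\sigma^*h^*[\partial C] = [\partial C^\downarrow]$ needed for the co-orientations of $C$ and $C^\downarrow$ to match across the gluing.
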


With this proposition, we may prove \Cref{spf}\ref{spf:Ngenus}. The proposition follows directly from the above proposition of Landry-Minsky-Taylor, whose argument we summarize here.

\begin{proof}[Proof of \Cref{spf}\ref{spf:Ngenus}]
 We assume we have a core $C_0$ which meets each flow line and that $(C, \partial C) \hookrightarrow (M, \partial M)$ is the $\eL$-embedding of $C_0$. Further, we assume $h$ preserves $\partial C$ while reversing the orientation of each component, and that $N=N(M,h)$ is atoroidal. We wish to show that the doubled surface is isotopic to a cross-section of $\eL$, i.e. a fiber of $N$.  
 
 As properly embedded cores, the subsurfaces $C$ and $C^\downarrow$ are positively transverse to $\eL$ and $\eL^\downarrow$ and meet each flow line of the respective flows.  The boundaries of the co-oriented subsurfaces $C \subset M$ and $C^\downarrow \subset M^\downarrow$ are identified under the $h$-double gluing to form a new subsurface $\hat F= \hat F(C,h)= C \,\cup\, C^\downarrow$. We observe that in the closed manifold $N$, the closed surface $\hat F$ meets each flow line of the identified flow $\eL$. 

While the open subsurface $\int(C) \cup \int(C^\downarrow)$ is positively transverse to the flow, we need to verify that $F$ is consistently positively transverse to this flow at the source of the gluing. It suffices to show that the induced co-orientations of $C$ and $C^\downarrow$ are consistent in the doubled manifold. Since $C$ is positively transverse to $\eL$, it induces a co-orientation of $\partial C \subset \partial M$, represented by $[\partial C] \in H^1 (\partial M)$. Likewise, the positive co-orientation of $C^\downarrow$ induces a co-orientation of $\partial C ^\downarrow \subset \partial M^\downarrow$, represented by $[\partial C^\downarrow] \in H^1 (\partial M^\downarrow)$.  We check that this induces a consistent co-orientation of $\partial C \subset \partial M \subset N$. Indeed, since $\sigma^*[\partial C] = -[\partial C^\downarrow]$, it follows that $\sigma^*h^*([\partial C]) = \sigma(-[\partial C]) = [\partial C^\downarrow]$. 

Now, since $\partial C$ is contained in a product neighborhood foliated by product fibers of $\eL$, we may isotope $\hat F$ along flow lines to a closed surface $F$ which is transverse to $\eL$ at each point. Thus, $F$ meets each flow line of $\eL$, and the first return map represents the monodromy of the fibration producing an atoroidal manifold. 

Finally, it follows from a straight-forward calculation that after doubling $C$, a compact surface with $g$ genera, $b$ boundary components, that the resulting closed surface has genus $2g+b-1 = |\chi(C)|-1 \leq |\chi(C)| \leq \capgen(f)$.
\end{proof}

The following remark follows from the construction of Landry-Minsky-Taylor, and Thurston's Hyperbolization Theorem for fibered manifolds \cite{Th86II}. 
\begin{remark}\label{ourspf}
	Any $C$ and $h$ as in \Cref{spf} produces a spun pseudo-Anosov package $(N, \mathcal{F}, \varphi, M)$ for $f$. 
	\begin{enumeri}
		\item $N=N(M,h)$ is a fibered, hyperbolic manifold with fiber $F$ isotopic to $\hat F=\hat F(C,h)$. 
	
		\item There is a pseudo-Anosov homeomorphism homeomorphism $\phi: F \to F$ with associated pseudo-Anosov flow $\varphi^\flat$ on $N_\phi$, and a homeomorphism $\iota: N \to N_\phi$ so that $\iota(S)$ is transverse to a dynamic blow up $\varphi$ of $\varphi^\flat$. 
		\item Let $f^\flat$ denote the first return map of $S$ (conjugated by $\iota$) under $\varphi$; this map is called a spun pseudo-Anosov representative of $f$, and is isotopic to $f$. 
		\end{enumeri}
\end{remark}

We note that the first return map of $\eL$ for the fiber $F$ is isotopic to a pseudo-Anosov homeomorphism. We require extra tools to compare the end-periodic suspension flow $\varphi_M$ with the pseudo-Anosov suspension flow $\varphi^\flat$ (or its blown up flow $\varphi$).

\section{Junctures, laminations and subsurfaces}\label{subs}

As a reminder to the casual reader, a subsurface $Y \subset S$ will always denote a compact, connected essential subsurface which meets both $\Lambda^+$ and $\Lambda^-$-- we call such a subsurface \textit{suitable} to $f$. By the (pseudo) $f$-invariance of the Handel-Miller laminations \Cref{f-invariance}, $\Lambda^\pm$  meets $Y$ if and only if $\Lambda^\pm$ meets $f^n(Y)$ for all $n\in\Z$, and $d_Y(\Lambda^+,\Lambda^-) = d_{f^n(Y)}(\Lambda^+, \Lambda^-)$.

We will apply the following remark to $\mathcal{J}^\pm$ or one of its partial juncture orbits $\mathcal{J}^\pm_k$. In either case, the closure of the union of geodesics is the negative (positive) Handel-Miller lamination.
\begin{remark}\label{prop:lamConvergence}
	Let $\mathcal{G}$ be an union of complete geodesics of a hyperbolic surface $X$, and $\Lambda=\bar{\mathcal{G}}$. If $\Lambda$ intersects a geodesic $\gamma$ transversely, then so must $\mathcal{G}$. 
\end{remark}

\begin{proof}
	Let $\tilde \Lambda$, $\tilde{\mathcal{G}}$, $\tilde \gamma$ be the respective lifts of $\Lambda$, $\mathcal{G}$ and $\gamma$ to the hyperbolic plane via the universal covering map. The endpoints of each of the lifts define subspaces $\partial^2(\Lambda)$, $\partial^2(\mathcal{G})$ in the double boundary $\partial^2(\H^2)$. 
	Let $\tilde \ell$ $\subset$ $\tilde \Lambda$ be a leaf which meets a component $\tilde \gamma_0 \subset \tilde \gamma$. Since $\tilde \ell$ represents a limit point of $\partial^2 (\mathcal{G})$, there exists a leaf $\ell' \subset \mathcal{G}$ so that  $\ell' \subset U \times V$, the neighborhood formed by the components $U$ and $V$ of $\partial \H^2 -\{\gamma_0\}$. Thus, $\ell'$ and $\gamma$ must intersect.
\end{proof}

\subsection{Subsurface projection distances}\label{subproj}

\begin{remark}\label{lemma:comparelam}
	Suppose that $Y\subset S$ is suitable to $f$. Then, for any pair of positive and negative juncture orbits $J^+$, $J^-$, we have:
	\begin{enumeri}
		\item $J_+$ and $J_-$ meet $Y$.
		
		\item $d_Y(J_\pm, \Lambda_\mp) \leq 1$
		\item $|d_Y(J_+, J_-)- d_Y(\Lambda^+,\Lambda^-)| \leq 2$
	\end{enumeri} 
\end{remark}

\begin{proof}
	By \Cref{piY-HM}, if $\Lambda^+$ and $\Lambda^-$ meet $Y$, they must also meet $\partial Y$. It follows by applying \Cref{prop:lamConvergence} to $\mathcal{G}=\mathcal{J}^\pm$, $\bar{\mathcal{G}} = \Lambda^\mp$ and $\gamma = \partial Y$,  that $\partial Y$ must also meet the juncture orbits $J^-$ and $J^+$ respectively. Recall that the projection map $\pi_Y$ is defined by lifting arcs and curves to the cover $\tilde X_Y$ corresponding to $Y$. By \Cref{disjoint:lamjunc}, which states that $\mathcal{J}^\pm$ and $\Lambda^\mp$ are disjoint in $X$, the corresponding lifts $\tilde{\mathcal{J}}^\pm$ and $\tilde{\Lambda}^\mp$ are disjoint in $\tilde X_Y$, so the respective projections satisfy $\pi_Y(J^\pm) \subset N_1(\Lambda^\mp)$ and $\pi_Y(\Lambda^\pm) \subset N_1(J^\mp)$.  
	Finally, for any $j^\pm \in J^\pm$ with $d_Y(j^-, j^+)= d_Y(J^+, J^-)$, and any $\lambda^\pm \in \pi_Y(\Lambda^\pm)$ $d_Y(\lambda^+, \lambda^-) =d_Y(\Lambda^+, \Lambda^-)$, it follows that 
	\[ d_Y(\lambda^+, \lambda^-) -2 \leq  d_Y(j^-, j^+) \leq  d_Y(\lambda^+, \lambda^-) +2. \]

\end{proof}

We note it is not true that a subsurface which meets $J_\mp$ also meets its lamination counterpart $\Lambda^\pm$. For example, one can imagine a subsurface contained in a negative ladder $U_-$ which meets $J_-$, but it must be disjoint from $\Lambda^+$ (see \Cref{noladders}).

\begin{lemma}\label{lemma:junctureminl}
	Let $Y$ be a suitable subsurface. Then for any juncture orbit $J_+ = \{f^k(j_+)\}_{k\in\Z}$ there exists a minimal $K \in \Z$ for which $\pi_Y(f^{K}(j_+)) = \emptyset$. 
\end{lemma}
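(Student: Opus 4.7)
The claim asserts the existence of a minimal $K\in\Z$ for which $\pi_Y(f^K(j_+))=\emptyset$. My plan is to verify that the set $\mathcal{S}_Y = \{K\in\Z : \pi_Y(f^K(j_+))=\emptyset\}$ is both non-empty and bounded below in $\Z$.

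\emph{Non-emptiness (large positive $K$).} First I would observe that since $j_+$ bounds a tight positive ladder $U_+$, the loops $f^K(j_+)$ lie in the nested positive ladders $f^K(U_+)\subset U_+$ which shrink into the positive ends as $K\to+\infty$. Because $Y$ is suitable, by Remark~\ref{piY-HM} it is not contained in any positive ladder; in particular the compact set $Y$ is disjoint from $f^K(\overline{U_+})$ for all sufficiently large $K$. For such $K$, $f^K(j_+)$ can be realized disjointly from a collar of $Y$, so its lifts to $\widetilde{X}_Y$ sit in components lying over end-neighborhoods of $S$ that miss $Y$ and compactify to boundary-parallel curves in $X_Y$. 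Hence $\pi_Y(f^K(j_+))=\emptyset$, giving $\mathcal{S}_Y\neq\emptyset$.

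\emph{Bounded below (large negative $K$).} I would show that for $K$ sufficiently negative, $\pi_Y(f^K(j_+))\neq\emptyset$. By suitability of $Y$ and Remark~\ref{piY-HM}, the lamination $\Lambda^-$ meets $Y$ essentially, so I can fix an essential arc $\alpha\subset Y$ realized as a subsegment of a leaf $\ell\subset\Lambda^-$, and extend it to a compact arc $\alpha^+\subset\ell$ crossing $\partial Y$ transversely at both endpoints. Since $\Lambda^- = \overline{\mathcal{J}^+}-\mathcal{J}^+$ and $\mathcal{J}^+_k\to\Lambda^-$ in Hausdorff as $k\to-\infty$ (Section~\ref{HMlam}), each individual loop $f^K(j_+)$ with $K\leq k$ sits in a prescribed $\varepsilon$-neighborhood of $\Lambda^-$ for $k$ negative enough. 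I would then argue that each component $c_i\subset j_+$ has backward orbit $\{f^K(c_i)\}_{K\to-\infty}$ accumulating on a sublamination $L_i\subset\Lambda^-$, and that $\bigcup_i L_i\supset\Lambda^-$ since $\Lambda^-\subset\overline{\mathcal{J}^+}$. In particular some $L_i$ contains the leaf $\ell$, so for $K$ below some threshold $K_0$, that component $f^K(c_i)$ carries a subarc tracking $\alpha^+$ in $C^0$ up to any desired $\varepsilon$. Such a subarc enters $Y$ through $\partial Y$ and exits through $\partial Y$; since it is a geodesic arc with geodesic boundary, uniqueness of geodesic representatives rules out a bigon with $\partial Y$, so the arc is essential in $Y$. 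Thus $\pi_Y(f^K(j_+))\neq\emptyset$ for all $K\leq K_0$, giving the desired lower bound.

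\emph{Main obstacle.} The delicate step is bridging from the Hausdorff convergence of the full truncated orbit $\mathcal{J}^+_k$ (which is what Section~\ref{HMlam} provides) to a statement about each individual loop $f^K(j_+)$ for every sufficiently negative $K$. The whole-orbit convergence immediately gives $f^K(j_+)\subset N_\varepsilon(\Lambda^-)$, but not the reverse containment $\Lambda^-\subset N_\varepsilon(f^K(j_+))$; a single loop has bounded length and cannot shadow all of $\Lambda^-$. I expect the correct way around this is to argue componentwise: each component $c_i$ of $j_+$ has a backward-orbit accumulation sublamination $L_i\subset\Lambda^-$, the union of the $L_i$ equals $\Lambda^-$, and at least one $L_i$ meets $Y$ essentially (since $Y$ meets $\Lambda^-$). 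For that component, standard lamination-shadowing yields the essential arc in $Y$ for all sufficiently negative $K$, completing the argument.
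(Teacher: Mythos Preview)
Your non-emptiness argument is fine and, in fact, more explicit than the paper's (which gestures at Remark~\ref{lemma:comparelam}).

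For the lower bound the paper takes a different and slicker route. Rather than shadowing an individual arc, it pulls $\partial Y$ and every $f^k(j_+)$ with $k\in\kappa$ tight to geodesics, so that $\mathcal{J}_\kappa=\bigcup_{k\in\kappa}f^k(j_+)^*$ is a union of simple closed geodesics disjoint from $\partial Y^*$. Assuming $\kappa$ is unbounded below, the paper observes that $\bar{\mathcal{J}_\kappa}\supset\Lambda^-$; since $\Lambda^-$ crosses $\partial Y^*$, Remark~\ref{prop:lamConvergence} (with $\mathcal{G}=\mathcal{J}_\kappa$) forces $\mathcal{J}_\kappa$ itself to cross $\partial Y^*$, contradicting the definition of $\kappa$. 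The point is structural: the argument is by contradiction on the \emph{entire} set $\kappa$, and a single offending $k\in\kappa$ suffices.

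Your shadowing approach, by contrast, tries to show directly that $\pi_Y(f^K(j_+))\neq\emptyset$ for \emph{every} $K\le K_0$, and you correctly flag the obstacle. But your componentwise fix does not close it. Saying that the accumulation set $L_i$ of $\{f^K(c_i)\}_{K\to-\infty}$ contains the leaf $\ell$ only tells you that $f^K(c_i)$ comes close to $\ell$ along an unbounded-below \emph{set} of $K$'s --- not for \emph{all} $K\le K_0$. ``Standard lamination-shadowing'' from accumulation alone does not upgrade a subsequence statement to a cofinal one, so you have not excluded the possibility that $\pi_Y(f^K(j_+))=\emptyset$ along some other subsequence $K\to-\infty$. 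That is precisely the scenario the paper kills by bundling the whole hypothetical subsequence into $\mathcal{J}_\kappa$ and applying Remark~\ref{prop:lamConvergence} once.
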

		One obtains the analogous statement, i.e. the existence of a maximal $K$ for negative juncture orbits by swapping $+$ for $-$ in the argument. 
\begin{proof}
	Fix a juncture $j_+ \subset J_+$ and let $\kappa =\{k_n\}$ be an enumeration of $\{k\in \Z :  \pi_Y(f^{k}(j_+)) = \emptyset \}$, a non-empty set by \Cref{lemma:comparelam}. 
	Consider our favorite hyperbolic metric $X$ on $S$ and pull $\partial Y$ and ${J}_\kappa = \bigcup (f^{k_n}(j_+))$ tight to geodesic multicurves $\partial Y^*$ and $\mathcal{J}_\kappa$ so that $\partial Y^* \cap \mathcal{J}_\kappa = \emptyset$ (see \Cref{piY-HM}). 
	
	Assume by contradiction that $\kappa$ is an arbitrarily negative sequence, and observe that closure of the geodesic multicurve $\bar{\mathcal{J}_\kappa}$ contains $\Lambda^-$ as a sublamination. It follows from \Cref{prop:lamConvergence} that the lamination $J_\kappa$ meets $\partial Y$, a contradiction. Thus, the sequence $\{k_n\}$ is bounded below and has a minimal element $K$.
\end{proof}

	\Cref{lemma:comparelam} shows that with a small adjustment of $\pm 2$, our results about the subsurface projection distances for juncture orbits also apply to the subsurface projection distance between the Handel-Miller laminations. A bound on the quantity $d_Y(\Lambda^+,\Lambda^-)$ restricts the position of $Y$ with respect to junctures of $f$:

\begin{lemma}\label{lemma:corelemma}
	Let $C$ be an $f$-extended core of $S$. Then given any surface $Y\subset S$ with $d_Y(\Lambda^+,\Lambda^-) \geq 4$, there exists $n\in \Z$ so that $f^n (Y) \subset C$.
\end{lemma}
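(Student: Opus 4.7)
The plan is to find $n \in \Z$ so that the translate $f^n(Y)$ is disjoint from both the positive and negative tight ladders $U_\pm$ whose union equals $S \setminus C$. Since $Y$ meets both Handel--Miller laminations (it is suitable, as the hypothesis $d_Y(\Lambda^+,\Lambda^-) \geq 4$ forces both projections to be non-empty), so does $f^n(Y)$ by the pseudo-$f$-invariance of \Cref{f-invariance}; hence $f^n(Y)$ cannot be contained in any positive or negative ladder by \Cref{noladders}, and disjointness from $\partial_\pm C$ will then force $f^n(Y) \subset C$.

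First I apply \Cref{lemma:comparelam} to the hypothesis, producing $d_Y(J^+,J^-) \geq 2$ for the juncture orbits $J^\pm$ of $\partial_\pm C$ and, in particular, that $\pi_Y(J^\pm) \neq \emptyset$. Then \Cref{lemma:junctureminl} yields a minimal integer $K_+$ with $\pi_Y(f^{K_+}(\partial_+ C)) = \emptyset$ and, dually, a maximal integer $K_-$ with $\pi_Y(f^{K_-}(\partial_- C)) = \emptyset$. Using the strict nesting $f^{k+1}(U_+) \subsetneq f^k(U_+)$ of positive ladders together with the observation that $Y$ (meeting $\Lambda^-$) cannot be trapped in any positive ladder, I argue $\pi_Y(f^k(\partial_+ C)) = \emptyset$ for every $k \geq K_+$; symmetrically, $\pi_Y(f^k(\partial_- C)) = \emptyset$ for every $k \leq K_-$. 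Via the equivariance of subsurface projection (\Cref{projcalculations}(ii)), choosing $n \in [-K_-, -K_+]$ then makes $f^n(Y)$ disjoint from both $\partial_+ C$ and $\partial_- C$, so the entire problem reduces to verifying that $K_+ \leq K_-$.

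The main obstacle is precisely this inequality $K_+ \leq K_-$, and this is where the $f$-extendedness of $C$ and the specific bound $d_Y(\Lambda^+,\Lambda^-) \geq 4$ enter. I expect to argue by contradiction: assuming $K_+ > K_-$, both $f^{K_+ - 1}(\partial_+ C) \in J^+$ and $f^{K_- + 1}(\partial_- C) \in J^-$ have non-empty projection to $Y$, and by \Cref{lemma:comparelam} each such projection lies within $d_Y$-distance $1$ of the opposite lamination. The $f$-extended structure of $C$ supplies a tower of nested, mutually disjoint intermediate junctures $\partial_+ C, f^{-1}(\partial_+ C),\ldots,\partial_+ C'$ all sitting inside $C$, and symmetrically on the negative side; I plan to use this tower, together with the disjointness of the positive and negative boundaries of any single translate $f^k(C)$, to produce a disjointness chain linking $f^{K_+-1}(\partial_+ C)$ to $f^{K_-+1}(\partial_- C)$ of $d_Y$-length at most $1$. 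Applying the triangle inequality then yields $d_Y(\Lambda^+,\Lambda^-) \leq 3$, the desired contradiction. Once $K_+ \leq K_-$ is established, taking $n = -K_+$ (or any $n \in [-K_-, -K_+]$) completes the argument.
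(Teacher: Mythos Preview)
Your plan is on the right track and shares the essential mechanism with the paper's proof: translate $Y$ by the minimal power $K$ from \Cref{lemma:junctureminl} so that $W = f^{-K}(Y)$ misses $\partial_+ C$, then use the subcore $C'$ supplied by the $f$-extended structure to derive a $d_W(\Lambda^+,\Lambda^-) \leq 3$ contradiction when $W$ is not yet inside $C$. The difference is in packaging. You introduce the symmetric pair $(K_+, K_-)$, prove the auxiliary monotonicity ``once empty, always empty,'' and then aim to show $K_+ \leq K_-$ via a disjointness-chain contradiction. The paper avoids all of this: it never touches $\partial_- C$ or any $K_-$. Having placed $W$ off of $\partial_+ C$, it cases directly on $\partial_- C'$ (the \emph{subcore's} negative boundary, which already lies inside $C$). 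If $\pi_W(\partial_- C') = \emptyset$, then $W$ lies in the region bounded by $\partial_+ C$ and $\partial_- C'$, hence in $C$---done. If $\pi_W(\partial_- C') \neq \emptyset$, then since $W$ also meets $\partial_+ C'$ (by minimality of $K$), the disjoint pair $\partial_+ C', \partial_- C'$ forces $d_W(\Lambda^+,\Lambda^-) \leq 3$. Your symmetric route can be completed, but it costs you the monotonicity step and a fiddlier chain argument; note in particular that in the boundary case $K_+ = K_- + 1$ the multicurves $f^{K_+-1}(\partial_+ C)$ and $f^{K_-+1}(\partial_- C)$ need not be disjoint, so you really must insert a translate of $\partial_\pm C'$ into the chain and verify that it projects nontrivially to $Y$. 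The paper's asymmetric shortcut---using $\partial_- C'$ in place of $\partial_- C$---is exactly what buys the cleaner one-step argument.
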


\begin{proof}
	 Consider the juncture orbits $J_+= \{f^{-k}(\partial_+C)\}_{k\in \Z}$ and $J_-= \{f^{k}(\partial_-C)\}_{k\in \Z}$. Let $C'$ be an essential subcore of $C$ whose boundary is contained in $J_+ \cup J_-$. Recall that we require each component of $\partial C'$ to be essential in $C$. Let $Y$ be an essential subsurface of $S$ with $d_Y(\Lambda^+,\Lambda^-) \geq 4$.
	
	Let $K$ be the minimal power (from \Cref{lemma:junctureminl}) for which $f^K(\partial_+C)$ is disjoint from $Y$. Set $W =f^{-K}(Y)$. While $W$ is disjoint from $\partial_+C$, we observe that $W$ must meet $\partial_+ C'$ by the minimality of $K$ and the fact that $Y$ is suitable.
	
	Now, $W$ is in a favorable position with respect to $\partial_+ C$, and we look to  $\partial_- C'$. Either $\pi_{W}(\partial_- C') = \emptyset$, or $\pi_{W}(\partial_- C') \neq \emptyset$.  In the former case, $W$ is a connected surface of $S -(\partial_+ C \sqcup \partial_-C')$, and not contained in any ladder (see \Cref{piY-HM}), so it must be contained in the core $C$. In the latter case, the disjoint multicurves $\partial_-C'$ \textit{and} $\partial_ +C'$ both meet $W$ so that $d_{W}(\Lambda^+ ,\Lambda^-) \leq {3}$, a contradiction. Thus, $W=f^{-K}(Y)$ is a subsurface of $C$.
\end{proof}

\subsection{Compatibility of laminations}

We now require tools to compare the Handel-Miller laminations of the spun-pseudo-Anosov representative of the end-periodic map to the pseudo-Anosov laminations from the fiber $F$ of $N$. We will survey the main characters of the story from the spun pseudo-Anosov package $(N, \mathcal{F}, \varphi, M)$ obtained from \Cref{spf}\ref{spf:Ngenus} and \Cref{ourspf}, and refer the reader to Landry-Minsky-Taylor \cite[Section 4]{LMT23} for specifics.
 
Since $F$ is a cross-section of the foliation $\eL$ which produces an atoroidal manifold, the first return map $r:F\to F$ of $\eL$ is isotopic to a pseudo-Anosov homeomorphism $\phi$ on $F$ \cite{Th82}. The pseudo-Anosov homeomorphism induces a circular,  pseudo-Anosov suspension flow $\varphi^\flat$ on $N_\phi$.  Naturally, we identify $F$ with the associated fiber of $N_\phi$; the product of the isotopy between $r$ and $\phi$ induces such a homeomorphism $N_r \to N_\phi$. 

Our goal is to compare the foliation $\eL$, obtained by the suspension of $r$, with the flow lines of $\varphi^\flat$. Recall that $\phi$ is equipped with a pair of unstable and stable foliations $\mathcal{W}^+_{\flat}$ and $\mathcal{W}^-_{\flat}$ of $F$, and corresponding unstable and stable laminations $\lambda^+$, $\lambda^-$ of $\phi$ (see \Cref{laminatioNs}). The circular flow $\varphi$ is obtained from $\varphi^\flat$ by a \textit{dynamic blow up}, an operation on $\varphi$ which alters the flow at its singular orbits; we refer the reader to \cite[Section 2.2]{LMT23} for specifics on this blow up procedure. Under this process, the fiber $F$ remains transverse to $\varphi$ and still meets each flow line of $\varphi$. More crucially, up to an isotopy of $N$, the depth-one foliation $\mathcal{F}$ of $N$ is positively transverse to $\varphi$. We  let $\iota: N_r \to N_\phi$ denote the homeomorphism taking $F$ to its copy in $N_\phi$, and $\mathcal{F}$ to $\iota(\mathcal{F})$ which is tranvserse to $\varphi$. With this, we make no distinction between $S$ (or $F$) and its image under $\iota$. 

The circular flow $\varphi$ induces unstable and stable $2$-dimensional singular foliations of $N$ denoted by $W^u$ and $W^s$, and by intersecting the leaves of each with $S$, we obtain stable and unstable singular foliations $\mathcal{W}^s$ and $\mathcal{W}^u$ of $S$.

Landry-Minsky-Taylor introduce the \textit{positive and negative invariant (singular) (sub)-laminations} of a spun pseudo-Anosov map, which we will denote by $\Lambda_{\flat}^+$, $\Lambda_{\flat}^-$ for the spA representative $f^\flat$. In brief, the positive lamination is obtained by removing negatively escaping leaves and contracting half-leaves from the unstable foliation $\mathcal{W}^u$, and the negative lamination is obtained by removing positively escaping leaves and expanding half-leaves from the stable foliation $\mathcal{W}^s$ \cite[Proposition 5.8]{LMT23}. In fact, Landry-Minksy-Taylor provide a direct comparision between the Handel-Miller laminations $\Lambda^\pm$ and $\Lambda_{\flat}^\pm$ by lifting them to the universal cover. Given a singular sublamination $\Lambda$ of a singular foliation of $\H^2$, recall that $\partial ^2 \big(\Lambda\big)\subset \partial^2(\H^2)$ denotes the end points of leaf-lines of $\Lambda$ in the double boundary of $\H^2$.
\begin{theorem}{Landry-Minksy-Taylor \cite[Theorem 8.4]{LMT23}}\label{HMcomparison}
	$$\partial^2\left(\widetilde{\Lambda}^\pm_{\flat}\right) = \partial^2\left(\widetilde{\Lambda}^\pm \right).$$ 
\end{theorem}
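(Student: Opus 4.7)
The plan is to work in the universal cover $\widetilde S = \mathbb{H}^2$ and exploit the fact that $f^\sharp$ (the Handel-Miller representative preserving $\Lambda^+$) and $f^\flat$ (the spun pseudo-Anosov representative preserving $\Lambda^+_\flat$) are both isotopic to $f$. An isotopy in $S$ lifts to an isotopy on $\mathbb{H}^2$ with uniformly bounded displacement, so one can choose compatible lifts $\tilde f^\sharp$ and $\tilde f^\flat$ that agree on $\partial\mathbb{H}^2$ as homeomorphisms of the circle at infinity. Consequently, for any loop $c \subset S$ and any lift $\tilde c$, the lifts $(\tilde f^\sharp)^n(\tilde c)$ and $(\tilde f^\flat)^n(\tilde c)$ share a pair of endpoints in $\partial^2(\mathbb{H}^2)$ for each $n$. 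This endpoint-agreement is the bridge between the geodesic-limit definition of $\Lambda^+$ and the foliation-limit definition of $\Lambda^+_\flat$.

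For the inclusion $\partial^2(\widetilde\Lambda^+_\flat) \subseteq \partial^2(\widetilde\Lambda^+)$, take a leaf-line $\tilde\ell$ of $\widetilde\Lambda^+_\flat$ with endpoints $(x,y)$. The lamination $\Lambda^+_\flat$ arises as a Hausdorff limit under the spun pseudo-Anosov dynamics: forward iterates $(f^\flat)^n(j_-)$ of a negative juncture --- a closed transversal to $\mathcal{W}^s_\flat$ --- accumulate on leaves of $\Lambda^+_\flat$. After passing to subsequences and choosing compatible lifts, $(\tilde f^\flat)^n(\tilde j_-)$ converges to $\tilde\ell$ and the endpoint pairs converge to $(x,y)$. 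By the endpoint-agreement above, the same pairs realize the limits of $(\tilde f^\sharp)^n(\tilde j_-)$. The geodesic representatives of the projections form precisely the components of $\mathcal J^-$, and by $\Lambda^+ = \bar{\mathcal J^-} - \mathcal J^-$ together with \Cref{prop:lamConvergence}, these accumulate on leaves of $\Lambda^+$. Since $\partial^2(\widetilde\Lambda^+)$ is closed, $(x,y)$ lies in it.

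For the reverse inclusion, begin with a lift $\tilde\gamma$ of a leaf of $\Lambda^+$ with endpoints $(x,y)$, which is by definition a limit of geodesic lifts $\tilde{\mathcal J}^-_n$ whose endpoint pairs converge to $(x,y)$. Each such geodesic shares its endpoint pair with its topological representative and thus with the corresponding $(\tilde f^\flat)^n(\tilde j_-)$. Running the forward-dynamics argument in reverse, these endpoint pairs must accumulate on $\partial^2(\widetilde\Lambda^+_\flat)$, a closed set, giving $(x,y) \in \partial^2(\widetilde\Lambda^+_\flat)$.

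The main obstacle is justifying the Hausdorff convergence of forward juncture iterates to $\Lambda^+_\flat$ under the spun pseudo-Anosov dynamics on the infinite-type surface $S$, as opposed to the well-understood pseudo-Anosov convergence on the closed fiber $F$. One must carefully track how the pseudo-Anosov laminations $\lambda^\pm$ on $F$ pass through the first-return construction and the modification step (removing escaping leaves, contracting or expanding half-leaves) to produce $\Lambda^\pm_\flat$ on $S$, and verify that these operations preserve endpoint data in $\partial^2(\mathbb{H}^2)$. A secondary subtlety is the treatment of singular orbits of the pseudo-Anosov flow: leaf-lines that transit singular leaves must be matched with (possibly several) geodesic leaves of $\Lambda^+$ sharing the same endpoint pairs, requiring a careful case analysis in a neighborhood of each singular orbit.
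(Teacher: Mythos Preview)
The paper does not prove this statement. \Cref{HMcomparison} is stated with an explicit citation to \cite[Theorem 8.4]{LMT23} and is used as a black box in the proof of \Cref{spf}\ref{spf:lamicom}; there is no proof environment following it, and the paper immediately proceeds to apply the result. So there is no ``paper's own proof'' to compare your proposal against.

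As to the substance of your sketch: the opening move---that $f^\sharp$ and $f^\flat$, being isotopic to $f$, admit lifts inducing the same homeomorphism of $\partial\mathbb{H}^2$---is correct and is indeed the right bridge between the two laminations. But you have already identified the real problem yourself. The claim that forward $f^\flat$-iterates of a negative juncture Hausdorff-converge to $\Lambda^+_\flat$ is not something you can simply import from the closed pseudo-Anosov case: $\Lambda^+_\flat$ is constructed by intersecting the $2$-dimensional unstable foliation $W^u$ of the flow with the depth-one leaf $S$ and then \emph{deleting} escaping leaves and contracting half-leaves. Forward iterates of a juncture will in general accumulate on the full $\mathcal{W}^u$ (including escaping leaves), not on $\Lambda^+_\flat$, so your limit argument as written would overshoot. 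Making this precise requires the structural analysis of the spun pseudo-Anosov package carried out in \cite{LMT23}, which is exactly why the present paper outsources the result rather than reproving it.
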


We're now ready to prove \Cref{spf}\ref{spf:lamicom}.

\begin{proof}[Proof of \Cref{spf}\ref{spf:lamicom}.]
	Let $Y$ be a subsurface of $S \subset \mathcal{F}$ satisfying $d_Y(\Lambda+,\Lambda^-) \geq 4$. We wish to show that there is a subsurface $W \subset F$, isotopic to $Y\subset S$ in the $3$-manifold so that the projection distances $d_W(\lambda^+, \lambda^-)$, $d_Y(\Lambda^+, \Lambda^-)$ are comparable. We assume $\partial Y$ avoids the singular orbits of $\varphi$. Since $C$ (satisfying the hypotheses of \Cref{spf}) is the $f$-extension of another core, by \Cref{lemma:corelemma} there is a power for which the subsurface $f^n (Y)$ is contained in the core $C$. There is an isotopy along flowlines of $\eL$ which takes $W' = f^n(Y)$ to a subsurface $W$ of the fiber $F$ which defines a (fixed) homeomorphism $W \to W'$ to pass from curves and proper arcs of $W'$ to curves and proper arcs of $W$. With this, we identify $\mathcal{AC}(W')$ with $\mathcal{AC}(W)$ and $\mathcal{AC}(Y)$. 
	
	For brevity, we will just discuss the positive/unstable (singular sub)-laminations and foliations; by replacing $+$ with $-$, our arguments hold for the negative/stable versions of the laminations and foliations.
	
	The lifts  $\tilde \Lambda^+$ and $\tilde \Lambda^+_{\flat}$ to $\H^2$ are $\pi_1(X)$-invariant singular laminations whose endpoints of leaf-lines agree by \Cref{HMcomparison}. Note that the leaf-lines of the projections of $\tilde \Lambda$ and  $\tilde \Lambda^+_{\flat}$ to the cover $\tilde{X}_Y$ still agree. It follows that $\pi_Y(\Lambda^+)=\pi_Y(\Lambda^+_{\flat})$. Recall that $\Lambda_{\flat}^+$ is obtained from $\mathcal{W}^u$ by removing leaves and half-leaves, so that $\pi_Y(\Lambda^+_{\flat}) \subset \pi_Y(\mathcal{W}^u)$ and $d_Y(W^u, \Lambda^+_\flat) \leq 1$.   
	
	Finally, we need to compare the projections $\pi_Y(\mathcal{W}^+)$ and $\pi_Y(\mathcal{W}^+_{\flat})$ corresponding to $\varphi$ and the genuine pseudo-Anosov circular flow $\varphi^\flat$, respectively. Since the flow $\varphi$ is obtained from $\varphi^\flat$ by introducing \textit{blown annuli}, i.e. mapping tori of finite-trees at singular orbits of $\varphi^\flat$, the singular foliation $\mathcal{W}^+$ is obtained from the singular foliation $\mathcal{W}^+_{\flat}$ by collapsing the finite-trees which produce the blown annuli. This is a homotopy equivalence of the surface which takes leaf lines of $\mathcal{W}^+$ to leaf lines of $\mathcal{W}^+_{\flat}$, so that $\pi_Y(\mathcal{W}^u)=\pi_Y(\mathcal{W}^u_{\flat})$.
	
	Thus, we have $d_Y(\Lambda^+, \Lambda^-)= d_{W}(\Lambda^+, \Lambda^-)$ and :
	$$d_W(\Lambda^+, \Lambda^-) = d_W(\Lambda_{\flat} ^+, \Lambda_{\flat}^-) \leq d_W(\mathcal{W}^+, \mathcal{W}^-) + 2 = d_W(\mathcal{W}_{\flat}^+, \mathcal{W}_{\flat}^-) +2  =d_W(\lambda^+, \lambda^-) +2 $$
	\end{proof}
	
	This completes the proof of \Cref{spf}\ref{spf:lamicom}, and the proof of \Cref{spf}.

\section{Construction of core and gluing map}\label{whichCore}
Our proof of Theorems \ref{thma} and \ref{thmb} will refer to a specific choice of a ``good'' gluing homeomorphism $h$ and ``nice'' core $C$, depending on our fixed atoroidal map $f$. We will use this section to specify the gluing map and core, and show that they satisfy the hypotheses of \Cref{spf}. The separate cases are as follows:
\begin{enumerate}[label=\textbf{Type \Roman*}:, leftmargin=6\parindent]
	\item $f$ is strongly irreducible, and $\partial M$ consists of genus-$2$ surfaces
	\item $f$ is strongly irreducible, and $\partial M$ does not consist of genus-$2$ surfaces
	\item Otherwise, $f$ is atoroidal, and not strongly irreducible
\end{enumerate}

\subsubsection{Game plan}
 Given our fixed end-periodic homeomorphism $f$ and minimal core $\Cmin$, we will first designate a multicurve $\Gamma_\infty \subset \partial M$ to represent the boundary of the nice core. With an application of \Cref{annuloops}, the multicurve $\Gamma_\infty$ will be used to define our nice core $C_0 \subset S_0$ as an $f$-extension of $\Cmin$. We note that the multiloop $\partial_\infty C_0$ consists of possibly multiple parallel components, but each is isotopic into $\Gamma_\infty$. We will then use $\Gamma_\infty$ to nail down our particular gluing homeomorphism $h$. Finally, the homeomorphism provides a particular multiloop $\Gamma^*$ (isotopic to $\partial_\infty C_0$) which is fixed by $h$, and we will find an $\eL$-embedding of $C_0$ which is bounded by $\Gamma^*$.  We nudge the reader to remind them of the subtleties of dealing with (multi)loops as opposed to (multi)curves. 
 
Although $\Cmin$ is a natural candidate for our nice core, we'll take a moment to describe how the good core is selected and used in the proof of our main theorems, and why $\Cmin$ doesn't quite suit our purposes.
We seek a homeomorphism of $\Sigma$ which preserves $\Gamma_\infty$ while reversing its orientation. Our favorite contenders for this behavior are hyperelliptic involutions which we will explicitly choose for \textbf{\Cref{case-i}} and \textbf{\ref{case-ii}}, and utilize for \textbf{\Cref{case-iii}}. The multicurve $\partial_\infty \Cmin$ has possibly many components on $\Sigma$--- it's unclear if there necessarily exists a hyperelliptic involution which preserves it. Further, the boundary of a minimal core may contain reducing curves which bound essential annuli in $M$. It could be possible that our gluing homemorphism has preserved the boundary components of an essential annulus, creating an essential torus in the doubled manifold $N$--- the sole obstruction to the hyperbolicity of $N$ in this setting.

\subsection{Setup} We will fix some notation and choices for use in the rest of this section. We will first choose $\Gamma_\infty$ so that for each component $\Sigma\subset \partial M$, $\Gamma_\infty \cap \Sigma$ consists of a sole curve which does not bound an annulus of $M$.  

Recall that the core $\Cmin= S - (U_+ \,\cup\, U_-)$, and let $R = R_+ \,\cup\, R_-$, where $R_+ = \overline{U_+ - f(U_+)}$ and $\overline{R_- = U_- - f^{-1}(U_-)}$. For a component $\Sigma \subset \partial M$, consider its associated end-orbit $\mathcal{O} = \epsilon_0, \ldots , \epsilon_{p-1}$ of period $p=\pSig$, labeled so that $f^{\pm(n)}(\epsilon_0) = \epsilon_n$ (depending on if $\mathcal{O}$ is attracting or repelling). Let $\mathcal{U}_i$ denote the escaping set of $\epsilon_i$. We'll describe a fixed choice of curve $\gammaSig \subset \Sigma$ for each component $\Sigma$. 

\begin{observation}
	There is a (smooth) separating loop of $\mathcal{U}_0$ which is contained in $R$ and is not reducing under $f$. 
\end{observation}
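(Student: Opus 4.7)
I would realize the desired loop as a slice of the flow annulus over the designated curve $\gammaSig := \Gamma_\infty \cap \Sigma$, and then deduce non-reducibility directly from the annulus-free hypothesis on $\gammaSig$.

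First, non-planarity of $\epsilon_0$ forces $\Sigma$ to have positive genus, so there are many essential simple closed curves on $\Sigma$ available. Fix a positive juncture $j$ of $f$ whose boundary-at-infinity $j_\infty \subset \Sigma$ is disjoint from $\gammaSig$ (e.g., take $j$ to be a sufficiently large forward iterate of $\partial_+\Cmin$). Since $\gammaSig$ and $j_\infty$ are disjoint we have $[\gammaSig]\cdot[j_\infty] = 0$ on $\Sigma$, so \Cref{annuloops} applies: the flow annulus $A_{\gammaSig} \subset \M_f$ is $\pi_1$-injective and meets $S_0$ in a multi-loop $J_{\gammaSig}$ consisting of a single $f^{\pSig}$-orbit of disjoint loops in $\mathcal{U}_0$. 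Since the $\langle f^{\pSig}\rangle$-translates of $R\cap\mathcal{U}_0$ tile $\mathcal{U}_0$ and the loops of $J_{\gammaSig}$ are disjoint from the juncture orbit $q^{-1}(j_\infty)$ (where $q:\mathcal{U}_0\to\Sigma$ is the cyclic quotient), exactly one component $\gamma$ of $J_{\gammaSig}$ lies in the interior of $R\cap\mathcal{U}_0$.

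To check that $\gamma$ separates $\mathcal{U}_0$, pick a subsurface of $\Sigma - j_\infty$ bounded (at least in part) by $\gammaSig$. Such a bounded side always exists: when $g(\Sigma)\geq 2$ one may take $\gammaSig$ separating in $\Sigma - j_\infty$, while in the genus-one case one takes $\gammaSig$ juncture-parallel so that the bounded side is a nesting neighborhood of $\epsilon_0$. This side lifts through $q$ to a subsurface of $\mathcal{U}_0\setminus q^{-1}(j_\infty)$; the closure of the component meeting $R\cap\mathcal{U}_0$ yields one of the two sides of $\gamma$ in $\mathcal{U}_0$.

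For non-reducibility, I argue by contradiction. Suppose some iterate $f^m(\gamma)$ has a representative $\gamma'\subset\mathcal{U}_-$. The isotopy annulus $B\subset S$ from $f^m(\gamma)$ to $\gamma'$, glued to the two $\pi_1$-injective flow annuli in $\M_f$ (from $f^m(\gamma)$ up to $\gammaSig\subset\partial_+M$ and from $\gamma'$ down to $\partial_-M$), yields a properly embedded annulus in $M$ with one boundary on $\gammaSig$ and the other on $\partial_-M$. Since its boundary components lie on distinct components of $\partial M$, this annulus is essential in $M$, directly contradicting the defining property of $\gammaSig$. The main obstacle I anticipate is the gluing step at the end: one must verify that the three constituent annuli meet transversely along matching circles and that the resulting glued annulus is embedded and $\pi_1$-injective in $M$, which I would check using the incompressibility of $\partial M$ supplied by \Cref{fkllhyp} together with the essentiality of each constituent annulus.
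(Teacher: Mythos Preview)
Your argument is circular. In the paper's logical order, the Observation is precisely what is used to \emph{construct} $\gammaSig$ and hence $\Gamma_\infty$: immediately after the Observation one reads ``we'll choose $\gamma'$ to be a smooth separating loop of $\mathcal{U}_0$ contained in $R_0$ as in the observation. We let $\gammaSig = (\gamma_0)_\infty$ \ldots\ and we let $\Gamma_\infty$ denote the union of the curves $\gammaSig$.'' So when you begin with ``the designated curve $\gammaSig := \Gamma_\infty \cap \Sigma$'' and invoke ``the annulus-free hypothesis on $\gammaSig$,'' you are assuming the output of the very construction the Observation is meant to supply. There is no independently given $\gammaSig$ at this point in the argument; the sentence in the Setup paragraph (``We will first choose $\Gamma_\infty$ so that \ldots'') is a forward announcement, not a definition.

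Even if you tried to decircularize by saying ``pick any curve on $\Sigma$ that bounds no essential annulus of $M$,'' you would still owe an argument that such a curve exists and that it arises as the $\infty$-boundary of a \emph{separating} loop of $\mathcal{U}_0$ lying in $R$; this is essentially the content of the Observation again. The paper's actual proof avoids the boundary entirely: it starts with any separating curve $\gamma_0 \subset R$ homologous to $\partial U_0$, then replaces $\gamma_0$ by its image under a high power of a pseudo-Anosov supported on the component $R_0 \subset R$, forcing $d_{R_0}(\gamma_0,\Lambda^+) \geq 2$. By \Cref{noEscape} this rules out negative escaping, hence $\gamma_0$ is not reducing. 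This is an intrinsic surface argument, with no appeal to annuli in $M$ or to $\gammaSig$.
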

\begin{proof}
	We'll describe a process to find infinitely many. By \Cref{noEscape}, we need only choose a separating curve $\gamma_0$ of $\mathcal{U}_0$ contained in $R$ with $\pi_{Y}(\gamma_0, \Lambda^+)\geq 2$ for some subsurface $Y$. For example, one may take $\gamma_0$ to be any separating curve homologous to the boundary of $U_0$, the unique infinite component of $\mathcal{U}_{0}\,\cap\, U_\pm$ (see \Cref{infcomponent}). By applying enough powers of say, a pseudo-Anosov map on the component $R_0\subset R$ containing the separating curve and replacing it with its image under the map, one obtains $\pi_{R_0}(\gamma_0, \Lambda^+)\geq 2$.
\end{proof}
   For each end-orbit, we'll choose $\gamma'$ to be a smooth separating loop of $\mathcal{U}_0$ contained in $R_0$ as in the observation. We let $\gammaSig= (\gamma_0)_\infty$, i.e., the boundary of the juncture annulus of $\gamma_0$ which meets $\Sigma$. Doing so for each end-orbit provides a single curve for each component of $\partial M$, and we let $\Gamma_\infty$ denote the union of the curves $\gammaSig$ over all components of $\partial M$.

\subsection{Good homeomorphisms}
To find a homeomorphism $h$ for which $N(M,h)$ is atoroidal, it suffices to verify that $h$ does not identify the boundaries of any two essential annuli of $M$. For each component $\Sigma \subset \partial M$, we let $\mathcal{A}_{\scriptscriptstyle \Sigma} \subset \mathcal{C}^{(0)}(\Sigma)$ denote the collection of curves which comprise of the boundaries of essential annuli of $M$ which meet $\Sigma$. Naturally, when $M$ is acylindrical $\mathcal{A}_{\scriptscriptstyle \Sigma}$ is empty.  

\begin{claim}\label{hatoroidal} Let $h$ be a homeomorphism for which $h|_{\scriptscriptstyle \Sigma}(\mathcal{A}_{\scriptscriptstyle \Sigma}) \cap \mathcal{A}_{\scriptscriptstyle \Sigma} = \emptyset$ in $\mathcal{C}(\Sigma)$ in each component $\Sigma \subset \partial M$. Then, $N(M,h)$ is atoroidal. 
\end{claim}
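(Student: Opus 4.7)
The plan is to argue by contradiction via a standard Haken-manifold cut-and-paste argument. Suppose $T \hookrightarrow N(M,h)$ is an embedded, $\pi_1$-injective torus; my goal is to push $T$ into a controlled position with respect to the gluing locus $\partial M \subset N$ and then exploit the hypothesis to produce a contradiction involving curves that bound essential annuli on both sides of the gluing.

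First I would place $T$ in general position with respect to $\partial M$ viewed as an embedded surface in $N$. If $T \cap \partial M = \emptyset$, then $T$ is contained in one of the two sides $M$ or $M^\downarrow$; both are atoroidal by \Cref{fkllhyp}, and since $f$ atoroidal forces $\partial M$ to have no torus components (any such component would itself be an embedded $\pi_1$-injective torus in $M$), there is no room for $T$, contradiction. Otherwise, using incompressibility of $\partial M$ in each of $M$ and $M^\downarrow$ (also from \Cref{fkllhyp}) together with the incompressibility of $T$, I would run the standard innermost-disk surgeries on $T \cap \partial M$ so that every remaining intersection curve is essential in both $T$ and $\partial M$, preserving essentiality of $T$ in $N$ throughout.

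Because $T$ is a torus, any collection of disjoint essential simple closed curves on $T$ forms a parallel family, so each complementary piece of $T \setminus \partial M$ is an annulus properly embedded in $M$ or in $M^\downarrow$. If some such annulus is boundary-parallel in its ambient side, I would isotope $T$ across the corresponding product region to strictly reduce $|T \cap \partial M|$; iterating, I may assume every complementary annulus is essential on its side. The main technical hurdle I anticipate is verifying that this sequence of surgeries and isotopies genuinely preserves essentiality of $T$ in $N$ and terminates; these are classical arguments in the Haken setting, but they will require careful bookkeeping with the innermost-disk normal form and with two-sided incompressibility of $\partial M$.

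Granted that configuration, the contradiction is short. Pick any component $c$ of $T \cap \partial M$ and let $\Sigma$ be the component of $\partial M$ containing it. On the $M$-side, $c$ bounds an essential annulus, so $c \in \mathcal{A}_\Sigma$. On the $M^\downarrow$-side, $c$ also bounds an essential annulus; transporting this annulus back through the gluing $\sigma \circ h$ and the topological identification $\sigma: M \to M^\downarrow$ identifies its boundary on $\partial M$ with $h(c)$, so $h(c) \in \mathcal{A}_{h(\Sigma)}$. Since the hypothesis implicitly has $h$ restricting to each component, $h(\Sigma) = \Sigma$, and therefore $h(c) \in h(\mathcal{A}_\Sigma) \cap \mathcal{A}_\Sigma$, directly contradicting the standing hypothesis on $h$. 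Hence $N(M,h)$ contains no essential torus, as claimed.
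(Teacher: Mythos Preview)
Your proposal is correct and follows essentially the same cut-and-paste argument as the paper: put $T$ in good position relative to $\partial M$, cut it into essential annuli on each side, and read off a curve $c$ with $c\in\mathcal{A}_\Sigma$ and $h(c)\in\mathcal{A}_\Sigma$, contradicting the hypothesis. The only real difference is in how the normal form is obtained: the paper invokes a strengthening of Roussarie--Thurston to make $T$ transverse to the depth-one foliation $\mathcal{F}$ (hence to its compact leaves $\partial M$) and then removes inessential disks, whereas you run the standard innermost-disk and boundary-parallel-annulus reductions directly. Your route is more elementary and avoids the foliation theory; the paper's route packages the essentiality of the resulting annuli a bit more cleanly. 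Either way the endgame is identical.

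One small quibble: your justification that $\partial M$ has no torus components---``any such component would itself be an embedded $\pi_1$-injective torus in $M$''---is not quite right, since a boundary torus is tautologically boundary-parallel and does not itself violate atoroidality of $M$. The correct reason in this paper's setting is that all ends of $S$ are assumed non-planar, which forces each component of $\Sigma_\pm$ to have negative Euler characteristic (cf.\ the hyperbolization discussion requiring $\chi(\partial M)<0$). The paper's own proof glosses over this point in the same way, so you are in good company.
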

\begin{proof}[Proof of the claim.]
	Assume $T$ is an essential torus of $N(M,h)$. Since $M$ is atoroidal, the torus must meet $\partial M$. By a strengthening of Roussarie-Thurston's theorem (see \cite[Theorem 2.9]{H86}), we may isotope $T$ so that the torus is transverse to $\mathcal{F}\subset N$. By a further isotopy, we can remove any disks in $T - \partial M$, and assume $T$ meets $\partial M$ in an essential $1$-manifold $\Gamma = T \cap \partial M$. Namely $T \fatbslash \Gamma$, the torus cut along the multicurve, is the disjoint union of compact essential annuli.
	
	If $f$ is strongly irreducible, its mapping torus is acylindrical, thus, no such annulus exists and $N(M,h)$ is atoroidal.	Otherwise, let $\alpha$ be a component of $\Gamma$, and consider the annuli $A$ and $A^\downarrow \subset T \fatbslash \Gamma$ whose boundaries meet at $\alpha$.  After cutting $N \fatbslash \partial M$, this produces curves $\alpha \subset M$, and $\alpha^\downarrow \subset M^\downarrow$, both bounding the respective annuli,  which were identified by $h(\alpha)=(\alpha^\downarrow)$. This contradicts the stated assumption. 
\end{proof}

	The JSJ decomposition for manifolds with boundary provides $A$, a (possibly empty) disjoint collection of essential annuli, so that any essential annulus of $M$ is homotopic into a component of $M-A$ \cite[Theorem 3.8, 3.9]{Bon01}. In other words, we have either $\mathcal{A}_{\scriptscriptstyle \Sigma} = \emptyset$, or $\diam_{\mathcal{C}(\Sigma)}(\mathcal{A}_{\scriptscriptstyle \Sigma})\leq 2$.

\subsection{Choice of homeomorphism} 
Our homeomorphism will be chosen with respect to the multicurve $\Gamma_\infty =\bigcup \gammaSig$ describe aboved.  For each component $\Sigma$, choose a hyperelliptic involution mapping class about $\gammaSig$ so that some representative homeomorphism $h'_{\scriptscriptstyle \Sigma}$ fixes a representative loop $\gammaPSig: S^1 \to \Sigma$ of $\gammaSig$ and reverses its orientation; we write $h'(\gammaPSig)= -\gammaPSig$ to denote this behavior. Let $h': \partial M \to \partial M$ denote the component-wise homeomorphism formed by the composition of $h'_{\scriptscriptstyle \Sigma}$ for each component of $\partial M$. After selecting $h$, we will describe a multiloop $\Gamma_h$ which satisfies $h(\Gamma_h)=-\Gamma_h$. 

	\begin{casee}[An isometry of the totally geodesic boundary]{$f$ is strongly irreducible and $\partial M$ consists of totally geodesic surfaces:}\label{case-i}\end{casee} Recall that on the genus-$2$ surface any hyperelliptic involution mapping class preserves each isotopy class of curve, thus it follows from the $9g-9$ Theorem that $h_{\scriptscriptstyle \Sigma}$ acts trivially on the Teichm\"uller space of the surface (see \cite[Section 12.1]{FM12}). So, we let $h_{\scriptscriptstyle \Sigma}$ be the isometry of $(\Sigma, s^*)$ which is isotopic to $h'_{\scriptscriptstyle \Sigma}$ and preserves the geodesic representative $\gamma^*(\Sigma)$ of $\gammaPSig$. We let $h$ be the composition of all such isometries over each component of $\partial M$, so that $\Gamma_h = \bigcup \gamma^*(\Sigma)$, again over all components of $\partial M$.

		\begin{casee}[An involution of the boundary]{$f$ is strongly irreducible and $\partial M$ is not homeomorphic to the disjoint union of genus-$2$ surfaces:}\label{case-ii}\end{casee} 
		
		We let $h=h'$, the homeomorphism above which satisfies $h(\gamma'(\Sigma))= -\gammaPSig$ so that $\Gamma_h = \bigcup \gamma'(\Sigma)$, over all components of $\partial M$. \\
	
		\begin{casee}[A homeomorphism which mixes annuli:]{$f$ is atoroidal, but not strongly irreducible:}\label{case-iii}\end{casee} It follows that $M$ is not acylindrical, i.e. it contains an essential annulus. 
		
	Let $W_{\scriptscriptstyle \Sigma}$ be an open annulus containing $\gammaPSig$. We construct a map $\phiSig$ which is supported on $\Sigma_0 = \Sigma - W_{\scriptscriptstyle \Sigma} \cong \Sigma \fatbslash \gammaSig$. If $\mathcal{A}= \mathcal{A}_{\scriptscriptstyle \Sigma} = \varnothing$, then we will let $\phiSig = \id_{\scriptscriptstyle \Sigma}$. Otherwise, if $\mathcal{A}$ is non-empty, we will construct it as follows. 
	
	Observe that for the homeomorphism $h'$ above, $\mathcal{A}$, $h'(\mathcal{A})$ both meet $\Sigma_0$ since $\gammaSig$ is a connected non-separating curve of $\Sigma$. Critically, ${\gammaSig} \notin \mathcal{A}$ since it was chosen from a curve which is not reducing in the discussion above. We will analyze $\mathcal{A}$, its image $h'(\mathcal{A})$, and the projections $\pi_{\Sigma_0}(\mathcal{A})$ and $\pi_{\Sigma_0}(h'(\mathcal{A}))$. 
	Let $D = \max\{d_{\Sigma_0}(\mathcal{A}, h'(\mathcal{A})), 6\}$, and  
	choose a pseudo-Anosov representative $\phiSig$ $\in \Homeo(\Sigma_0, \partial\Sigma_0)$ with $\tau_{\mathcal{AC}({\Sigma_0})}(\phiSig) > D+3$. 
	We let $$h = \prod_{\Sigma \subset \partial M} \phi_{\Sigma} \circ h'.$$  
	Since $h|_{W_\Sigma} = h'$, we let the preserved multiloop $\Gamma_h = \bigcup \gamma'(\Sigma)$.
	
	For $\alpha \in\pi_{\Sigma_0}(\mathcal{A})$, we have:
	\[
	d_{\Sigma_0}(h(\alpha), \alpha)=
	d_{\Sigma_0}(\phiSig (h'(\alpha)), \alpha) \geq d_{\Sigma_0}(\phiSig (h'(\alpha)), h'(\alpha)) -  d_{\Sigma_0}(\alpha, h'(\alpha))\\ \geq (D+3) -D =3 
	\]
	As above, $\diam(\pi_{\Sigma_0}(\mathcal{A})) \leq 2$. So, it follows that $h(\alpha) \notin \mathcal{A}$ and $\mathcal{A} \cap h(\mathcal{A}) = \emptyset$. Thus, $h$ satisfies \Cref{hatoroidal}.
	
\subsection{Nice cores}
Our method of selection of core $C_0$ will again depend on the multicurve $\Gamma_\infty$ and the homeomorphism chosen above. 
We'll detail this more general construction carefully. 
\subsubsection{Multiloop core extension}\label{sec-gammaextension}
 Let $\Gamma$ be an arbitrary multiloop contained in $\partial M - \partial_\infty \Cmin$ which satisfies the following: for each component $\Sigma$,  $\Gamma_{\scriptscriptstyle \Sigma} = \Gamma \,\cap\, \Sigma$ is a multiloop with $\pSig$ parallel components, where $\pSig$ denotes the period of the end-orbit $\mathcal{O}$ corresponding to $\Sigma$. Given such a multiloop $\Gamma$, we will construct a core $C_\Gamma$ which is $f$-extended, contains $\Cmin$ and has $\partial_\infty(C_\Gamma) = \Gamma$. We will refer to $C_\Gamma$ as the \textit{$\Gamma$-extension of $\Cmin$}, and note that as a subset of $S$ it is determined by $\Cmin$, the multiloop $\Gamma$, the labels of each end-orbit, and the labels on each component of $\Gamma_{\scriptscriptstyle \Sigma}$.

We will work in one component $\Sigma \subset \partial M$ at a time letting $p=\pSig$. Let $\Gamma_{\scriptscriptstyle \Sigma} = \gamma_0 \sqcup \gamma_1 \sqcup \ldots \gamma_{p-1}$. By \Cref{annuloops}, each $\gamma_i$ determines an annulus and loop orbit $J_{\gamma_i}$. Now, after flowing along the annuli, consider the parallel components of $\bigcup J_{\gamma_i}$ contained in $R_{0}$. With reference to \Cref{gammaext}, we will keep the same notation to refer to $\gamma_i \subset S_0$ as the unique component of $J_{\gamma_i}$ contained in $R_{0}$. 
\begin{figure}[h]
	\includegraphics[width=\textwidth]{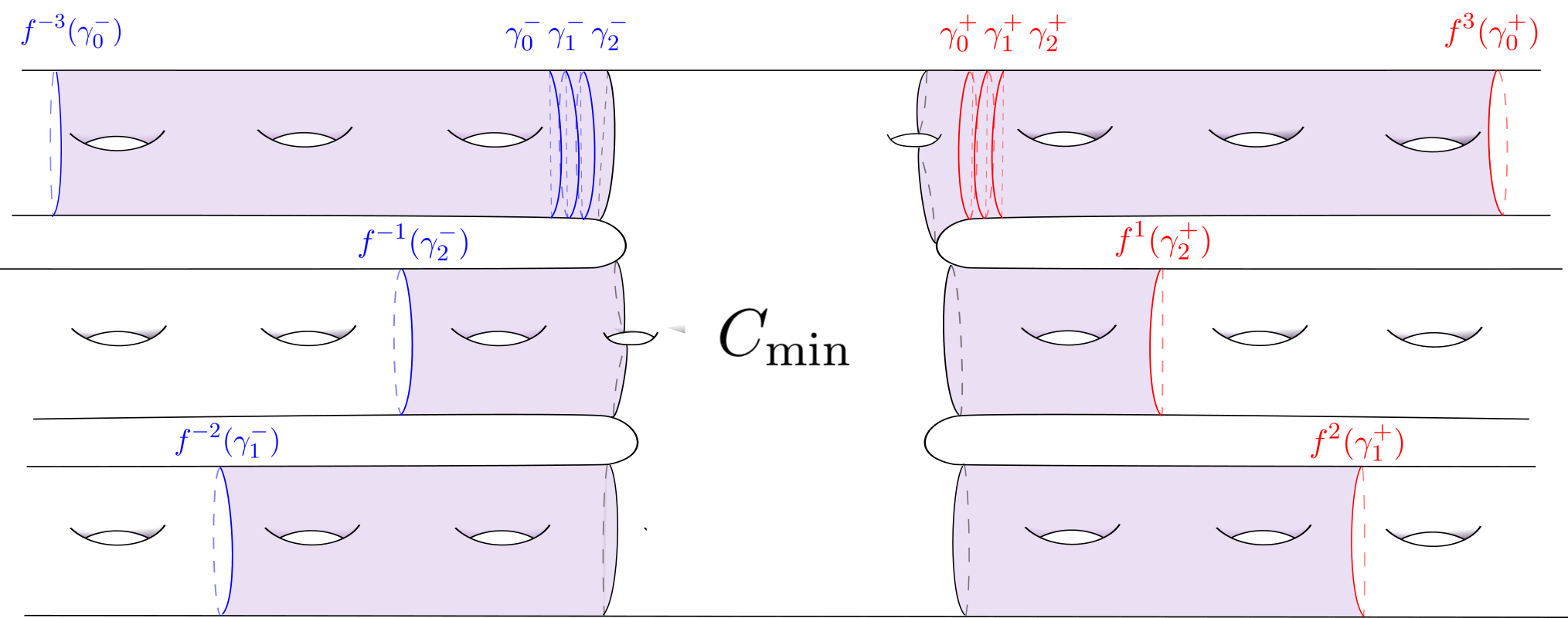}
	\caption{An $f$-extended core of an end-periodic map whose end-orbits have period $3$. Here, $\Gamma = \gamma^-_0 \cup \gamma^-_1 \cup \gamma^-_2 \cup \gamma^+_0 \cup \gamma^+_1 \cup \gamma^+_2$, and $C_\Gamma$ is the union of $C_{min}$ and the disconnected surface $Z$ colored in purple. \label{gammaext}}
\end{figure}

Let $\Gamma_{\mathcal{O}}$ denote the following multiloop, depending on if the end-orbit is positive or negative.

\[\Gamma_{\mathcal{O}} = \begin{cases}
	\displaystyle \bigcup_{i=0}^{p-1}  f^{p-i}(\gamma_i) & \text{if } \mathcal{O} \text{ is attracting. }\\
	\displaystyle \bigcup_{i=0}^{p-1}  f^{-(p-i)}(\gamma_i) & \text{if } \mathcal{O} \text{ is repelling }
	
\end{cases}	\]

For each juncture orbit, the multiloop $\Gamma_\mathcal{O}$ forms a partial juncture of $\mathcal{O}$ so that the union of $\Gamma_{\mathcal{O}}$ over all end-orbits forms a disjoint union of a positive and negative juncture of $f$. The $\Gamma$-extension $C_\Gamma$ is the core bounded by these junctures. Observe that it is an $f$-extended core, and was carefully constructed to satisfy the juncture distinction property.

\subsection{Choice of core and properly embedded core} 
\begin{claim}\label{h-multiloops}
	Let $h$ be a hyperelliptic involution homemorphism on a closed, connected surface $\Sigma$, and let $\gamma$ be an oriented loop so that $h(\gamma)=-\gamma$. Then for each $p$, there exists a smooth oriented multi loop, $\Gamma = \gamma_0 \cup \gamma_1, \ldots \cup \gamma_{p-1}$ with each component isotopic to $\gamma$, so that $h(\Gamma)=-\Gamma$, i.e. $h$ preserves $\Gamma$ and reverses the orientation of the image of each component of $\Gamma$. 
\end{claim}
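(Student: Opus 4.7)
The plan is to construct $\Gamma$ inside an $h$-invariant annular neighborhood of $\gamma$, using a standard model for the involution. First, since $h|_\gamma$ is an orientation-reversing involution of a circle, it has exactly two fixed points $p_1, p_2 \in \gamma$; these are fixed globally by $h$, and therefore lie in the isolated Weierstrass fixed-point set of the hyperelliptic involution. Averaging a Riemannian metric on $\Sigma$ by $\langle h \rangle$ yields an $h$-invariant metric whose normal exponential map over $\gamma$ produces an $h$-invariant tubular neighborhood $A$ of $\gamma$, which is an annulus since $\gamma$ is two-sided.

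Next I would put $h|_A$ in a standard form. The restriction $h|_A$ is an orientation-preserving involution of $A$ that preserves $\gamma$ with orientation reversal and has isolated fixed-point set $\{p_1, p_2\}$. By the classification of such involutions, $h|_A$ is topologically conjugate to
\[
\iota : S^1 \times [-1,1] \to S^1 \times [-1,1], \qquad (\theta, t) \mapsto (-\theta, -t),
\]
with $\gamma$ corresponding to $S^1 \times \{0\}$. Equivalently, one can argue via the branched double cover $q : \Sigma \to \Sigma/\langle h \rangle$: the image $q(\gamma)$ is an arc in the quotient sphere joining the two branch points $q(p_1), q(p_2)$, and a closed disk neighborhood $D$ of this arc containing no other branch points pulls back (by Riemann--Hurwitz) to the annulus $A$, with deck involution $\iota$.

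Finally, using the model $(A, \iota)$, I assemble $\Gamma$ explicitly. For each $c \in (0,1)$, the parallel circles $\gamma_c := S^1 \times \{c\}$ and $\gamma_{-c} := S^1 \times \{-c\}$ are disjoint, each isotopic to $\gamma$, and orienting both in the direction of increasing $\theta$ one verifies $\iota(\gamma_{\pm c}) = -\gamma_{\mp c}$. For $p = 2k$ even, choose $0 < c_1 < \cdots < c_k < 1$ and set $\Gamma = \bigcup_{i=1}^{k}(\gamma_{c_i} \cup \gamma_{-c_i})$. For $p = 2k + 1$ odd, adjoin $\gamma$ itself (which already satisfies $h(\gamma) = -\gamma$ by hypothesis) to such a collection of $p - 1$ parallel copies. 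In either case the resulting multiloop satisfies $h(\Gamma) = -\Gamma$. The main obstacle is the classification step in the second paragraph: one must confirm the model $\iota$ and rule out the reflection-type involution $(\theta, t) \mapsto (-\theta, t)$, whose one-dimensional fixed set is incompatible with the isolated-fixed-point structure of a hyperelliptic involution.
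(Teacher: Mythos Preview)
Your argument is correct. The paper in fact states this claim without proof and immediately proceeds to use it, so your construction via an $h$-invariant annular neighborhood modeled on $(\theta,t)\mapsto(-\theta,-t)$ supplies precisely the details the paper omits. The only substantive point is the one you flag at the end: ruling out the reflection model $(\theta,t)\mapsto(-\theta,t)$. Your reasoning there is sound---either because that model has one-dimensional fixed set, incompatible with the isolated Weierstrass points of a hyperelliptic involution, or equivalently because $h$ is orientation-preserving on $\Sigma$ while orientation-reversing on $\gamma$, forcing it to swap the two sides of the annulus. With that settled, the parity split (pairing parallel copies at heights $\pm c_i$, and adjoining $\gamma$ itself when $p$ is odd) is exactly right.
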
 

 Given the claim above, we are ready to describe our choice of core. For each component $\Sigma$, let $\Gamma_\Sigma$ be an oriented multiloopconsisting of $\pSig$ parallel components about $\gammaSig$ so that $h(\Gamma_\Sigma))= -\Gamma_\Sigma$. Finally, we will let $\Gamma^*$ denote the union of such multiloops over all components of $\partial M$. 
 
 We isotope $\Gamma^*$ to a multiloop $\Gamma' \subset \partial M - \partial_\infty \Cmin$; let $C_0$ be the $\Gamma'$-extension of $\Cmin$ as defined in the previous subsection \Cref{sec-gammaextension}. Now, $C_0$ is our good core and we will isotope the manifold to get the right $\eL$-embedding $(C, \partial  C) \hookrightarrow (M, \partial M)$ of $C_0$ so that $\partial C= \Gamma^*$.

First, we'll observe that both $\partial_\infty C_0$ and $\Gamma^*$ are smooth, isotopic curves of $\partial M$, so by the Isotopy Extension Theorem \cite[Section 5]{P60}, there exists a smooth ambient isotopy $\psi_t: \partial M \to \partial M$ so that $\psi_0 = \id_{\partial M}$ and $\psi_1(\partial_\infty C_0) = \Gamma_*$. Let $U= U^+ \sqcup U^-$ be a spiraling neighborhood of $\partial M$, which we will parametrize by $U^+ = \partial_+ M  \times [0,1]$ and $U^- = \partial_- M  \times [-1,0]$ so that $\partial M \times \{0\}$ is in the interior of the manifold. Now, let $\Psi$ be the diffeomorphism of $M$ given by the identity on $M -U$, $\Psi(x,t):= (\psi_t(x), t)$ on $U^+$ and $\Psi(x,t):= (\psi_{-t}(x), t)$ on $U^-$.  Observe that $\Psi$ is isotopic to $\id_M$. Let $\bar C_0$ be the $\eL$-embedding of $C_0$ obtained by collapsing juncture annuli, as in \Cref{propEmbedCore}. Since $\bar C_0$ is transverse to $\eL$, $\Psi (\bar C_0)$ is transverse to $\Psi(\eL)$. Now, $\Psi (\bar C_0)$ is a properly embedded core bounded by $\Gamma^*$. 

We'll use $\Psi$ to remark $M$, considering the tuple $(M, \mathcal{F}, \eL)$ as $(\Psi (M), \Psi(\mathcal{F}), \Psi(\eL))$. 
We construct the $h$-double as $N(M,h) = \Psi(M) \,\cup_h\, \Psi(M)^\downarrow$ so that the union $ \Psi(\bar C_0) \cup_{\sigma \circ h} \Psi(\bar C_0)^\downarrow$ is a closed surface $\hat F$.

The next lemma shows that our choice of $C_0$, $C$ and $h$ satisfy the hypotheses of \Cref{spf}. 

\begin{lemma}\label{goodCore}
	In \Cref{case-i}, \ref{case-ii}, \ref{case-iii} as above, the core $C_0$ and homeomorphism $h$ satisfy:
	\begin{enumerate}[label=(\arabic*)]
		\item $C_0$ is an $f$-extension, and meets each flow line of $\eL$.
		\item\label{chibounds} $\chi(C_0) \leq \capgen(f)$
		\item There is an $\eL$-embedding $(C, \partial C) \hookrightarrow (M, \partial M)$ of $C_0$.
	
		\item $h$ preserves $\partial C$, and reverses the orientation on each image of each component of $\partial C$.
		\item\label{Natoroidal} $N=N(M,h)$ is atoroidal.
	\end{enumerate} 
\end{lemma}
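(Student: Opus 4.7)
\emph{Proof plan.} The five conditions split into items that follow directly from the explicit construction ((1), (3), (4)), an Euler characteristic estimate ((2)), and a case-by-case application of \Cref{hatoroidal} ((5)).

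For (1), the core $C_0$ is the $\Gamma'$-extension of $\Cmin$ described in \Cref{sec-gammaextension}, and its boundary multiloops are built from $f$-translates of loops adjacent to $\partial \Cmin$. By design $\Cmin$ is a subcore witnessing the $f$-extension condition, so the orbit $\bigcup_k f^k(C_0)\supseteq \bigcup_k f^k(\Cmin)$ sweeps out $S_0$ and \Cref{coreFiber} gives that $C_0$ meets each flow line. For (3), the construction arranges that distinct $f$-translates of components of $\partial C_0$ are disjoint, so $\partial C_0$ satisfies the juncture distinction property, and \Cref{coresToGuts} produces an $\eL$-embedding $\bar C_0$. Composing with the ambient diffeomorphism $\Psi$ induced by the isotopy extension moving $\partial_\infty C_0$ onto $\Gamma^*$ yields the desired $(C,\partial C)\hookrightarrow(M,\partial M)$ with $\partial C = \Gamma^*$. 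For (4), the multiloop $\Gamma^*$ was chosen via Claim \ref{h-multiloops} precisely so that $h(\Gamma^*) = -\Gamma^*$ componentwise.

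Item (2) is the main bookkeeping step, and is the step I expect to be the principal obstacle. Decompose $C_0 = \Cmin \cup Z$ where $Z$ is the closure of $C_0\setminus\Cmin$. Since $|\chi(\Cmin)| = \chi(f)$ by definition of the core characteristic, it suffices to show $|\chi(Z)| \leq \xi(f)^2$. For each component $\Sigma\subset \partial M$ with end-orbit $\mathcal{O}$ of period $\pSig$, the portion of $Z$ inside $\mathcal{U}_{\mathcal{O}}$ is bounded between $\partial \Cmin \cap \mathcal{U}_{\mathcal{O}}$ and $\Gamma_{\mathcal{O}}$, and can be tiled by at most $\pSig$ fundamental domains of $f^{\pSig}$ acting on $\mathcal{U}_{\mathcal{O}}$; each such fundamental domain projects via \Cref{boundarysurfaces} to a subsurface of $\Sigma$ of complexity controlled by $\xi(\Sigma)$. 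Combining this with the non-planarity of the ends (which ties $\pSig$ to the genus of $\Sigma$) yields
\[|\chi(Z)| \;\leq\; \sum_\Sigma \pSig \cdot \xi(\Sigma) \;\leq\; \Bigl(\sum_\Sigma \xi(\Sigma)\Bigr)^{\!2} \;=\; \xi(f)^2,\]
and hence $|\chi(C_0)|\leq \chi(f) + \xi(f)^2 = \capgen(f)$. This is where the quadratic form in the definition of $\capgen(f)$ and the non-planarity assumption on ends become essential; the other items are essentially immediate from the explicit construction.

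For (5), we apply \Cref{hatoroidal}. In \Cref{case-i} and \Cref{case-ii}, $f$ is strongly irreducible, so by \Cref{fkllhyp} $\M_f$ is acylindrical, giving $\mathcal{A}_{\scriptscriptstyle \Sigma} = \emptyset$ for every $\Sigma\subset \partial M$ and the hypothesis of \Cref{hatoroidal} holds vacuously. In \Cref{case-iii}, the gluing $h = \prod \phiSig \circ h'$ was designed so that $\phiSig$ is a pseudo-Anosov of $\Sigma_0$ with $\tau_{\mathcal{AC}(\Sigma_0)}(\phiSig) > D+3$, and the explicit computation in the construction of $h$ yields $d_{\Sigma_0}(h(\alpha), \alpha) \geq 3 > \diam_{\mathcal{AC}(\Sigma_0)}(\pi_{\Sigma_0}(\mathcal{A}_{\scriptscriptstyle \Sigma}))$ for every $\alpha \in \pi_{\Sigma_0}(\mathcal{A}_{\scriptscriptstyle \Sigma})$, so $h(\mathcal{A}_{\scriptscriptstyle \Sigma}) \cap \mathcal{A}_{\scriptscriptstyle \Sigma} = \emptyset$ and \Cref{hatoroidal} gives atoroidality of $N(M,h)$.
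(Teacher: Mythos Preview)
Your treatment of (2), (3), (4), (5) matches the paper's approach closely: the Euler-characteristic estimate via the decomposition $C_0=\Cmin\cup Z$ and the bound $\sum_\Sigma \pSig\leq \xi(f)$ is exactly how the paper proceeds, and the remaining items are, as you say, by design.

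There is, however, a genuine gap in your argument for (1). You assert that $\bigcup_k f^k(C_0)\supseteq \bigcup_k f^k(\Cmin)$ ``sweeps out $S_0$'', but you never justify that $\bigcup_k f^k(\Cmin)=S_0$, and in \textbf{Type III} this can fail. If $f$ is atoroidal but not strongly irreducible it may admit reducing curves, so $\mathcal{U}_+\cap\mathcal{U}_-\neq\emptyset$; a point $x$ in this intersection can have its entire $f$-orbit contained in $U_+\cup U_-$ (forward iterates nesting into $U_+$, backward iterates into $U_-$), hence never meeting $\Cmin$. Equivalently, the corresponding flow line of $\eL$ misses $\Cmin$. (Also, $\Cmin$ is not literally the subcore witnessing the $f$-extension condition, since its boundary is not in the $f$-orbit of $\partial C_0$.)

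The paper's proof of (1) does not go through $\Cmin$ at all. Instead it exploits the specific construction of $C_0$: for each end-orbit $\mathcal{O}$ of period $p$ it exhibits a fundamental domain $L_{\mathcal{O}}=\overline{U_{\gamma_0}}\setminus f^{\pm p}(U_{\gamma_0}) \subset C_0$ for the $f^p$-action on the escaping set of that end, so that $\bigcup_k f^k(L_\pm)=\mathcal{U}_\pm\supset U_\pm$. Since $S=\Cmin\cup U_+\cup U_-$ and $\Cmin,L_+,L_-\subset C_0$, this gives $S\subset\bigcup_k f^k(C_0)$ and \Cref{coreFiber} applies. The point is that the $\Gamma'$-extension was built precisely so that $C_0$ contains these fundamental domains; your argument discards this information and so does not go through in the non-acylindrical case.
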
	
\begin{proof}[Proof of (1)]
	The core $C_0$ is constructed as an $f$-extension, containing the core bounded by $\bigcup f^{p-i}(\gamma_i)$ ($\bigcup f^{-(p-i)}(\gamma_i)$ if $\mathcal{O}$ is repelling). We'll prove that it meets each flow line of $\eL$. For each end-orbit $\mathcal{O}$ of period $p$, let $L_\mathcal{O}$ denote compact, connected subsurface:
	 
	 \[L_{\mathcal{O}} = \begin{cases}
	 	\bar{U_{\gamma_0}}- f^p(U_{\gamma_0}) & \text{if } \mathcal{O} \text{ is attracting. }\\
	 	\overline{U_{\gamma_0}}- f^{-p}(U_{\gamma_0}) & \text{if } \mathcal{O} \text{ is repelling. }
	 	
	 \end{cases}	\]
	 Let $L_+$ be the union of $L_{\calO}$ over all positive end-orbits, and similarly, let $L_-$ be the union of $L_{\calO}$ over all negative end orbits. By our construction, $L_+, L_- \subset C$. Observe that the compact regions $\{f^k (L_\pm)\}$ cover $U_\pm$. Thus, \[S = C \cup \bigcup_{k\in\Z} f^k (L_+ \sqcup L_-) \subset \bigcup_{k\in\Z} f^k(C).\] It follows from \Cref{coreFiber} that $C$ meets each flow line of $\eL$. 
	 
	 \end{proof}
	 \begin{proof}[Proof of (2)]
	 	We compute the $\chi(C_0)=\chi(C)$. Let $Z$ denote the surface $C_0 - \int(\Cmin)$, and let $Z_+$ (resp. $Z_-$) represent the components of $Z$ contained in $U_+$ (resp. $U_-$). Recall that by properties of the Euler characteristic, $\chi(C_0)= \chi(\Cmin)+ \chi(Z_+) + \chi(Z_-)$.  We'll start by computing the Euler characteristic of $Z_+$, working in a positive end orbit $\mathcal{O}$ at a time. We let $Z_\mathcal{O}$ denote the components of $Z_+$ contained in $\mathcal{U}_\mathcal{O}$, and $Z_i$ denote the component of $Z_\mathcal{O}$ which meets $\mathcal{U}_{i}$. The surface $Z_i$ is bounded by $\partial\Cmin \cap \mathcal{U}_i$ and $f^{i}(\gamma_{p-i})$. 
	 	
	 	Then, it follows by our multiloop core construction that $f^{p-i}(Z_i) \hookrightarrow L_+$. Further, since $L_+$ is a fundamental domain for the action of $f$ on $\mathcal{U}_+$, $L_+$ embeds into $\partial_+ M$ so that $\chi(\partial_+M)= \frac{1}{2}\chi(\partial M) = \frac{1}{3}\xi(f)$. Finally, observe that the sum $\sum p_{\lil \mathcal{O}}$ over all end-orbits is bounded above by $\xi(f)$, since the complexity measures the cardinality of a maximal multicurve of $\partial M$.
	 	
	 	Thus,  \[\chi(Z_\mathcal{O}) \leq \sum_{i=0}^{\pcalO-1} \chi(Z_i)\leq \pcalO\cdot\chi(L_+) \leq \pcalO \cdot\chi(\partial_+ M)= \frac{1}{2}\pcalO\cdot\chi(\partial M) = \frac{1}{3}\pcalO\cdot\xi(f)\]
	 	
	 	and \[\chi(Z_+ \sqcup Z_-) = \sum_{\mathcal{O}} \chi(Z_\mathcal{O}) \leq \sum_\calO \frac{1}{3} \pcalO \cdot \xi(f)\leq \frac{1}{3}\xi(f)^2 \leq \xi(f)^2 \]
	 	so that  \[\chi(C) \leq \chi(f)+ \xi(f)^2 = \capgen(f)\] 
	 \end{proof}
	
	Observe that by \Cref{spf}\ref{spf:Ngenus}, the genus of $F$, the fiber of $N(M,h)$ isotopic to $\hat F(C, h)$, is bounded by $\capgen(f)$.
	 
	Parts (3), (4) and (5) are by design. Since the junctures of $C_0$ satisfy the juncture distinction property, the core is carried to the properly embedded surface $C=\bar C_0$ as in \Cref{propEmbedCore}. We use the diffeomorphism $\Psi$ to remark the manifold so that $\partial C= \Gamma$. The homeomorphism $h$ was chosen so that $h$ preserves $\partial C$ while reversing the orientation of each image of each component.
	Finally, recall that the gluing homeomorphisms for \Cref{case-i}, and \Cref{case-ii} are chosen in the strongly irreducible setting where $\mathcal{A}= \emptyset$, while that of \Cref{case-iii} was cooked up to satisfy $h(\mathcal{A}_{\scriptscriptstyle \Sigma})\,\cap\, \mathcal{A}_{\scriptscriptstyle \Sigma} \neq \emptyset$ for each component. In any case, $N(M,h)$ is atoroidal by \Cref{hatoroidal}. This completes the proof of the lemma.

\section{Isometric immersions and proofs of \Cref{thma} and \Cref{thmb}}\label{geoM}
Before proving Theorems \ref{thma} and \ref{thmb}, we record some general facts about the compactfied mapping torus $M$ and its $h$-double. These results are generally true for a compact, irreducible atoroidal hyperbolic $3$-manifold with boundary, but we'll just state it for our compactified mapping tori.

\begin{lemma}\label{some-immersion}
	Let $h:\partial M \to \partial M$ be a homeomorphism so that $N(M,h)$ is hyperbolic. Then, there exists a hyperbolic metric $s\in \AH(M_f)$ and an isometric immersion $\iota: (M_f, s) \to N$ so that for each simple closed curve $\alpha\subset M$, $\ell_{s}(\alpha)= \ell_N(\iota(\alpha))$. 
\end{lemma}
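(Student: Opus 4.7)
The plan is to produce $s$ as the hyperbolic structure on $M_f$ inherited from the cover of $N$ corresponding to $\pi_1(M) \leq \pi_1(N)$, a cover which will contain an isometric copy of $M$ as a compact core.

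Fix a discrete faithful representation $\rho:\pi_1(N) \hookrightarrow \Isom(\H^3)$ realizing the hyperbolic structure on $N$. By \Cref{fkllhyp}, $M$ is irreducible with incompressible boundary, so the decomposition $N = M \cup_h M^\downarrow$ along the incompressible surface $\partial M$ yields, via van Kampen,
\[\pi_1(N) \;\cong\; \pi_1(M) *_{\pi_1(\partial M)} \pi_1(M^\downarrow),\]
and in particular the inclusion $\pi_1(M) \hookrightarrow \pi_1(N)$ is injective. Let $H = \rho(\pi_1(M))$ and form the cover $p: N_H := \H^3/H \to N$, which is a local isometry. Since the deck group is exactly $\pi_1(M)$, the embedding $M \hookrightarrow N$ lifts to an embedding $M \hookrightarrow N_H$ inducing a $\pi_1$-isomorphism, so $M$ sits inside $N_H$ as a compact core in the sense of Scott.

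Next, I would identify $N_H$ with $M_f = \int(M)$. Because $N$ is closed hyperbolic, $\rho$ has no parabolic elements, so $N_H$ is a finitely generated Kleinian manifold without cusps. By the Tameness Theorem of Agol and Calegari--Gabai, $N_H$ is homeomorphic to the interior of a compact $3$-manifold, with each end admitting a product neighborhood. Combining tameness with the fact that $M$ is a compact core and $\partial M$ is incompressible, each component of $N_H \setminus \int M$ is homeomorphic to $\Sigma \times [0,\infty)$ for some component $\Sigma \subseteq \partial M$, yielding a homeomorphism $N_H \cong \int M = M_f$. Pulling back the hyperbolic metric on $N_H$ along this homeomorphism produces a complete hyperbolic structure $s \in \AH(M_f)$, and the composition $\iota: (M_f, s) \cong N_H \xrightarrow{p} N$ is the desired isometric immersion.

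The length identity is then immediate from covering space theory: any simple closed curve $\alpha \subset M$ represents a conjugacy class in $\pi_1(M) \subseteq \pi_1(N)$, and both $\ell_s(\alpha)$ and $\ell_N(\iota(\alpha))$ equal the translation length of the loxodromic isometry $\rho(\alpha)$ on $\H^3$. The main technical point is the middle step: using tameness to upgrade the $\pi_1$-equivalence $M \hookrightarrow N_H$ to a genuine homeomorphism $N_H \cong M_f$, so that the cover inherits exactly the topological type of the compactified mapping torus. Once this is in hand, the construction of $s$ and the length equality follow formally.
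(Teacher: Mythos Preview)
Your proof is correct and follows essentially the same approach as the paper: take the cover of $N$ corresponding to $\pi_1(M)$, identify it with $M_f$, and let $s$ be the pullback metric, so that lengths agree via the covering map. The paper's argument is terser, asserting that the cover is ``naturally'' homeomorphic to $M_f$, whereas you supply the justification via the Tameness Theorem and a compact-core argument; this extra detail is helpful and fills in exactly the step the paper leaves implicit.
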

\begin{proof}
	Consider the inclusion $i: M\hookrightarrow N$, and let $\pi: N_{\pi_1(M)}\to N$ be the cover of $N$ corresponding to $i_*(\pi_1(M)) \leq \pi_1(N)\leq \Isom^+(\H^3)$. Naturally, $N_{\pi_1(M)}$ is homeomorphic to $M_f$. Let $s$ be the metric corresponding to the pullback $\pi: M_f \to N$. This naturally produces an isometric immersion $\pi: (M_f,s) \to N$. For each curve $\alpha \subset N$, and each component $\tilde\alpha_0 \subset \tilde\alpha = \pi^{-1}(\alpha)$, the restriction $\pi|_{\tilde\alpha_0}: \tilde\alpha_0 \to \alpha$ is degree-one. Thus, $\ell_s(\tilde\alpha_0) = \ell_N(\alpha)$.	
\end{proof}

\begin{lemma}\label{tg-immersion}
		Suppose that $f$ is strongly irreducible, and let $(M, s^*)$ be the totally geodesic boundary structure of its acylindrical compactified mapping torus. Let $h: (\partial M, s^*)\to (\partial M, s^*)$ be an isometry, and let $N=N(M,h)$ be the $h$-double of $M$ equipped with the hyperbolic metric induced by the gluing. 
		
		Then, $(M, s^*) \hookrightarrow N$ isometrically embeds, and there is an isometric immersion $\iota: (M_f, s^*) \to N$ so that for each simple closed curve $\alpha\subset M$, $\ell_{s^*}(\alpha)= \ell_N(\iota(\alpha))$. 
\end{lemma}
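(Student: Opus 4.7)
The plan is to first verify that gluing two copies of $(M, s^*)$ along their common totally geodesic boundary via the isometry $h$ produces the prescribed hyperbolic structure on $N$, with each factor embedding isometrically; then to lift this to an isometric immersion of $(M_f, s^*)$ by passing to the cover of $N$ corresponding to $i_*\pi_1(M)$.

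For the first step, I would work in a collar of $\partial M$. Since $s^*$ has totally geodesic boundary, the developing map models the collar on one side of a totally geodesic plane $P \subset \H^3$. The isometry $h$ on $(\partial M, s^*|_{\partial M})$ is realized locally by isometries of $P$, and every isometry of $P$ extends uniquely to an isometry of $\H^3$ swapping the two sides of $P$. Gluing charts from the two copies $M$ and $M^\downarrow$ along the seam using this extension yields a single hyperbolic atlas on $N$ across the doubling locus. In this atlas each copy of $(M, s^*)$ is an isometrically embedded convex submanifold with totally geodesic boundary, and by construction the resulting hyperbolic metric on $N$ is the one stipulated in the statement.

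For the second step, note that $i: M \hookrightarrow N$ induces an injection $i_*: \pi_1(M_f) \cong \pi_1(M) \hookrightarrow \pi_1(N)$. Let $\pi: \widetilde{N} \to N$ be the cover corresponding to $i_*\pi_1(M_f)$. As in the proof of \Cref{some-immersion}, $\widetilde{N}$ is homeomorphic to $M_f$, and it inherits a hyperbolic metric $s'$ from $N$ making $\pi$ a local isometry, which gives our candidate immersion $\iota$. To identify $s'$ with $s^*$, observe that the lift of $M\subset N$ to $\widetilde{N}$ is a compact convex submanifold carrying $\pi_1(\widetilde{N})$, with totally geodesic boundary; it is therefore the convex core of $(\widetilde{N}, s')$. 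By the McMullen uniqueness of totally geodesic boundary structures on acylindrical manifolds \cite[Section 4]{Mc90}, applicable via \Cref{fkllhyp} since $f$ is strongly irreducible, we must have $s' = s^*$. The length equality for any simple closed curve $\alpha\subset M$ then follows exactly as in \Cref{some-immersion}: each component of $\pi^{-1}(\iota(\alpha))$ restricts to a degree-one map onto $\iota(\alpha)$, hence $\ell_{s^*}(\alpha) = \ell_N(\iota(\alpha))$.

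The main obstacle I expect is the identification $s' = s^*$, which depends crucially on McMullen's uniqueness theorem and on acylindricity of $M$; a secondary subtlety is checking that the orientation-reversing role of $\sigma$ in the $h$-double construction is exactly what is needed for the two collar charts to fit together as a single hyperbolic atlas rather than a merely piecewise $C^0$ one, so that the embedding of $(M, s^*)$ in $N$ is truly isometric rather than just bilipschitz.
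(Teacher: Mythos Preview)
Your proposal is correct and follows essentially the same approach as the paper: pass to the cover of $N$ corresponding to $i_*\pi_1(M)$, observe that the lift of $(M,s^*)$ is a convex submanifold with totally geodesic boundary and hence the convex core, and invoke McMullen's uniqueness to conclude $s'=s^*$. The paper dismisses your first step as ``immediate by our assumptions'' and gives a slightly different phrasing of the convex-core argument (lifting to $\H^3$ rather than to $\widetilde{N}$), but the substance is identical.
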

\begin{proof}
	The first claim is immediate by our assumptions. As above, we consider the cover  $\pi: (M_f, s) \to N$ corresponding to $\pi_1(M)\subset \Isom(\H^3)$, where $s$ is the pullback metric from the hyperbolic structure on $N$ induced by the gluing. For concreteness, we check that $(M_f, s)=(M_f, s^*)$. Since $(M, s^*) \subset N$ is a convex, totally geodesic submanifold, its lift $\tilde M\subset \H^3$ is a $\pi_1(M)$-invariant convex subset of $\H^3$ whose boundary consists of totally geodesic embeddings of $\H^2 \hookrightarrow \H^3$. The convex core of $(M_f, s)$ is $\widetilde{M}/\pi_1(M)$, a hyperbolic manifold with totally geodesic boundary. By the uniqueness of such a structure, $s=s^*$.  
\end{proof}
\subsection{Proofs of \Cref{thma} and \Cref{thmb}}
We are ready to prove the main theorems.
\begin{proof}[Proof of \Cref{thma}]
	Fix $D, \varepsilon> 0$ and let $f$ be an atoroidal end-periodic homeomorphism with $\capgen(f)\leq D$. Let $C_0$ and $h$ be the choice of core and gluing construction from \Cref{whichCore}. By \Cref{goodCore}, $C_0$ and $h$ satisfy the hypotheses of \Cref{spf}. Further, by \Cref{some-immersion}, there is a hyperbolic structure $s\in \AH(M_f)$ for which $(M_f, s)$ isometrically immerses into the closed, fibered hyperbolic manifold $N=N(M,h)$. It follows from \Cref{spf}\ref{spf:Ngenus} and \Cref{ourspf} that $N$ is fibered by a closed surface $F$ isotopic to $\hat F(C,h)$ whose genus is bounded above by $\capgen(f)$.
	
	Let $K'=K'(D,\varepsilon)$ be the constant from \Cref{minsky} coming from a surface of genus $D$ and let $K=\textrm{ max}\{K', 4\} + 2$. Let $Y$ be a subsurface of $S\subset M$ satisfying $d_Y(\Lambda^+,\Lambda^-)\geq K$. Then by \Cref{spf}\ref{spf:lamicom},  $Y$ is isotopic to $W \subset F$ so that $d_W(\lambda^+, \lambda^-) \geq K'$.
	We apply Minsky's \Cref{minsky} so that after pulling the multicurve $\partial W$ tight to a geodesic, $\ell_N(\partial W) \leq \varepsilon$. It follows by \Cref{some-immersion}, that $\ell_s(\partial Y) = \ell_s(\partial W) =\ell_N(\partial W) \leq \varepsilon$, completing the proof of \Cref{thma}. 
\end{proof}

\begin{proof}[Proof of \Cref{thmb}]
	This theorem is a special case of \Cref{thma}, we repeat a nearly identical argument to that above. Fix $D, \varepsilon > 0$ and let $f$ be a strongly irreducible end-periodic homeomorphism whose $\partial M$ consists of genus-$2$ surfaces. Let $C$ and $h$ be the properly embedded surface and gluing homeomorphism chosen from \Cref{case-i}. Recall that both satisfy the hypotheses of \Cref{spf}, and that $h$ is an isometry of $(\Sigma, s^*)$.
	By \Cref{tg-immersion}, $(M_f, s^*)$ isometrically immerses into $N$ so that $\ell_{s^*}(\alpha) = \ell_N(\alpha)$ for all simple closed curves $\alpha$ of $M_f$. 
	
	Let $K'=K'(D,\varepsilon)$ be the constant from \Cref{minsky} coming from $\varepsilon$ and a surface of genus $D$. Exactly as the proof above, by applying \Cref{spf} and \Cref{minsky} for  $K=\textrm{ max}\{K', 4\} + 2$, it follows that  if $d_Y(\Lambda^+, \Lambda^-)\geq K$, then $\ell_{s^*}(\partial Y)  = \ell_N(\partial Y)\leq \varepsilon$. This completes the proof of \Cref{thmb}.
	
\end{proof}

\section{Examples}\label{examples}
We'll detail constructions of end-periodic homeomorphisms before providing a few applications of our main theorems. We note to the reader that notation has been reset. 

\subsection{Handle shift mapping classes}
We will explicitly describe a somewhat natural occurrence of a end-periodic translation on a ladder surface. 

Let $\Sigma=\Sigma_g$ be a closed, connected surface of genus-$g$. Let $\beta$ be a non-separating curve on $\Sigma$, let $R = \Sigma \fatbslash \beta$, i.e, the surface with two boundary components cut along $\beta$, and consider the following homeomorphism
 \[\tau_\beta: \pi_1(\Sigma) \to H_1(\Sigma) \xrightarrow{\iota_\beta} \Z\]
which is the composition of the abelianization map on $\pi_1(\Sigma)$ and the algebraic intersection form with respect to $\beta$ defined by $\iota_\beta([\alpha]) = \langle [\alpha], [\beta] \rangle$. We observe that $H = \ker{\tau_\beta} = \ll \pi_1(R)\gg$, the normal closure of $\pi_1(R)$. The cover $\tilde S_H \to \Sigma$, corresponding to $H$ has a cyclic deck transformation group generated by $\rho: \tilde S_H \to \tilde S_H$. 

\begin{example}[A translation map on the ladder]\label{hsladder}
	Given $\Sigma$, and $\beta$, and $H$ as above:
	\begin{enumerate}[label=(\roman*)]
		\item $\tilde \Sigma_H$ is homeomorphic to the ladder surface $L$.
		\item $\rho$ is a an end-periodic translation map. 
	\end{enumerate}
\end{example}

\begin{proof}
	First, note that $H= \pi_1(\Sigma_H)$ is an infinite-index subgroup of $\pi_1(\Sigma)$. Observe that $R$ lifts homeomorphically to $\tilde \Sigma_H$, and that a given component of $p^{-1}(R)$ is a fundamental domain of the infinite order deck transformation action-- this implies that $\Sigma_H$ is a boundaryless surface of infinite-type. Since $\langle T \rangle$ acts freely and cocompactly on the surface, by the Svarc-Milnor lemma (see \cite[Prop. 8.19]{BH99}), $\Sigma_H$ is quasi-isometric to $\Z$, thus is two-ended. It follows that $\Sigma_H$ is homeomorphic to the ladder surface, as it is the unique boundaryless, two-ended surface of infinite-type. 
	
	Fixing a basepoint $x\in R$ and lifted basepoint $\tilde x_0 \in p^{-1}(x)$, we label a lift of $R$ by $\tilde R_i$ if it contains the lifted basepoint $T^i(x_0)$. For each $i$, we have the neighborhood $\bar U_+ =\bigcup_{k\geq i} T^i (\tilde R_i)$ so that $T(U_+) \subset U_+$. We may similarly define $\bar U_-  =\bigcup_{k \leq i} T^i (\tilde R_i)$, and $T^{-1}(U_-) \subset U_-$. Note that the interiors of the respective sets, after taking iterates of the map $T$, form neighborhood bases for the respective ends, and $\bigcup_\Z T^n(U_+)=\bigcup_\Z T^n(U_-) = L$. Thus, $T$ is an end-periodic translation map. 
\end{proof}
 
We note that any such map above is isotopic to a handle shift homeomorphism as defined by Patel-Vlamis\cite{PV18}.  Let $\bar L$ be the surface whose boundary is homeomorphic to $\R \sqcup \R$ and interior homeomorphic to the Loch Ness Monster surface (the unique boundary-less infinite-type surface with one end). Informally, a handle shift homeomorphism is an embedding of a translation map of $\bar L$ onto a $S$, and the support of the homemorphism is the image of the embedding. We refer the reader to \cite[Section 6]{PV18} for their precise definition of a handle shift homeomorphism. It is worth noting that handle shift homeomorphisms themselves are not necessarily end-periodic, nor isotopic to an end-periodic homeomorphism.

\subsection{Constructions of strongly irreducible end-periodic homeomorphisms}
The following proposition of Field-Kim-Leininger-Loving uses handle shifts and compactly supported mapping classes to produce a myriad of examples of strongly irreducible homeomorphisms. The authors specify $\rho$ to be the product of commuting handle shift homeomorphisms with disjoint support as in \cite[Construction 2.10]{FKLL23}. We rephrase their result below as their arguments work for a more general class of end-periodic homeomorphisms.

\begin{proposition}\label{fkllexamples}(Field-Kim-Leininger-Loving \cite[Prop. 6.3]{FKLL23})
Given $\rho$ an end-periodic homeomorphism, $C$ a connected core of $\rho$, and $g\in \Homeo(C, \partial C)$, if:

\begin{enumerate}[label=(\roman*)]
\item $\partial C$ separates each end, i.e. the components of $S-C$ are in bijection with $\Ends(S)$, 
\item $\rho(\partial_-C), \rho^{-1}(\partial_+C) \subset C$ with $i(\rho(\partial_-C), \rho^{-1}(\partial_+C)) = 0,$ 
\item and $ d_C(\rho(\partial_-C), g(\rho(\partial_- C))> 9$,
\end{enumerate}

then $f= \rho g$ is isotopic to a strongly irreducible end-periodic homeomorphism. 

\end{proposition}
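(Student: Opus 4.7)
I would split the proof into four pieces matching the definition of strongly irreducible: $f = \rho g$ is (a) end-periodic; (b) admits no periodic curves (atoroidal); (c) admits no periodic arcs; and (d) admits no reducing curves or arcs. Since $g \in \Homeo(C, \partial C)$ is supported on the compact core $C$, the map $f$ agrees with $\rho$ on $S \setminus C$. Hypothesis (i) gives, for every end $\epsilon$ of $S$, a nesting neighborhood $U_\epsilon$ disjoint from $C$. On each such $U_\epsilon$, $f = \rho$, and so the same $U_\epsilon$ serves as a nesting neighborhood of $\epsilon$ for $f$, with the same attracting/repelling type; this proves (a) and also shows that $\rho$ and $f$ share escaping sets and (up to isotopy) the same positive and negative junctures coming from $\partial C$.

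\textbf{Core projection-distance estimate.} The heart of the argument is a subsurface projection inequality of the form
\[
d_C\bigl(\alpha, f^n(\alpha)\bigr) \;\geq\; c\, |n| - c'
\]
for every curve or arc $\alpha$ meeting $C$ essentially, where $c, c' > 0$ depend only on (iii) and the topology of $C$. To prove it, write $\sigma = \rho(\partial_- C)$ and $\tau = \rho^{-1}(\partial_+ C)$; both lie in $C$ and have zero geometric intersection by (ii). Observe that $g$ fixes $\partial C$ pointwise, so $\rho g$ sends $\tau = \rho^{-1}(\partial_+ C)$ to $\partial_+ C$ and its inverse sends (an appropriate image of) $\partial_- C$ back through $\sigma$. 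Using (ii) we can compare $\pi_C(f^n \alpha)$ with $\pi_C(g^n \alpha)$ up to a bounded additive error (the disjointness of $\sigma$ and $\tau$ means the ``$\rho$-part'' of $f$ contributes boundedly to the $C$-projection), and then (iii) combined with \Cref{mintrans} applied to $g$ shows $d_C(\pi_C(\alpha), \pi_C(g^n\alpha))$ grows linearly. The Behrstock-style inequality from \Cref{projcalculations} is exactly what I would use to absorb errors when an intermediate iterate momentarily fails to project to $C$.

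\textbf{Ruling out periodic/reducing curves and arcs.} Let $\gamma$ be an essential curve or arc. If $\gamma$ can be isotoped off $C$, then by (i) it lies in a nesting neighborhood of a single end of $S$, on which $f = \rho$. Its forward (or backward) $f$-iterates then nest into that end, so $\gamma$ is strictly one-sided escaping: it cannot be periodic, and cannot be reducing (reducing would require both positive and negative escape, but the escape direction here is fixed by which end contains $\gamma$). If instead $\gamma$ meets $C$ essentially, then the core projection estimate above gives $d_C(\gamma, f^n \gamma) \to \infty$ as $|n| \to \infty$, so no positive iterate is isotopic to $\gamma$, killing periodicity. For reducibility of such a $\gamma$, suppose toward contradiction that $f^N(\gamma)$ escapes into a positive ladder and $f^{-M}(\gamma)$ into a negative ladder; take the last iterate $f^{N_0}(\gamma)$ still meeting $C$ before escape. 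Its projection to $C$ is within bounded distance of $\pi_C(\partial_+ C)$ (by how it is entering the ladder), and similarly for the negative side, bounding $d_C(f^{-M_0}\gamma, f^{N_0}\gamma)$ in terms of $\diam_C(\pi_C(\partial C))$, which contradicts the linear-growth estimate once $N_0 + M_0$ is large relative to the constants from (iii). The same argument works verbatim for arcs, treating $\gamma$ as a vertex of $\mathcal{AC}(S)$ projecting to $\mathcal{AC}(C)$.

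\textbf{Main obstacle.} The subtle point is the projection estimate when iterates of $\gamma$ transiently fail to meet $C$: $\pi_C(f^k\gamma) = \emptyset$ for some intermediate $k$ even though $\pi_C(\gamma)$ and $\pi_C(f^n\gamma)$ are non-empty. Here condition (ii), saying $\rho(\partial_- C)$ and $\rho^{-1}(\partial_+ C)$ lie in $C$ with disjoint realization, is essential: it lets me replace the $\rho$-factors in $f^n = (\rho g)^n$ by boundary data of $C$ up to a uniform additive error, reducing the problem to iterated action of $g$ on $\mathcal{AC}(C)$, where \Cref{mintrans} and the strict inequality in (iii) produce honest linear growth. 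Carefully tracking these additive errors, and in particular verifying that the $9$ in hypothesis (iii) is enough slack to absorb the bounded $\rho$-contribution plus the universal Behrstock constant, is the step I expect to be the most technical.
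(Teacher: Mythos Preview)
The paper does not prove this proposition: it is quoted from Field--Kim--Leininger--Loving \cite[Prop.~6.3]{FKLL23} and no argument is given here. So there is nothing in the present paper to compare your attempt against directly.

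That said, your outline has a real gap. You invoke \Cref{mintrans} for $g$ to get linear growth of $d_C(\alpha, g^n\alpha)$, but \Cref{mintrans} applies only when $g$ is fully supported (pseudo-Anosov) on $C$, and hypothesis (iii) does not assume this: it is a single displacement bound $d_C(\rho(\partial_-C), g(\rho(\partial_-C)))>9$, which a reducible $g$ can satisfy. Relatedly, your claimed reduction ``$\pi_C(f^n\alpha)$ agrees with $\pi_C(g^n\alpha)$ up to bounded error'' is not correct as stated, since $f^n=(\rho g)^n$ interleaves $\rho$ and $g$ rather than separating into $\rho^n g^n$; the $\rho$-factors move curves out of $C$ between successive applications of $g$, so there is no direct comparison with iterated $g$ on a fixed curve.

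The argument in \cite{FKLL23} instead tracks the projection to $C$ of successive $f$-iterates of the junctures $\partial_\pm C$ themselves (not an arbitrary $\alpha$), using (ii) to land $\rho(\partial_-C)$ and $\rho^{-1}(\partial_+C)$ back in $C$ as disjoint multicurves and (iii) to force a definite jump in $\mathcal{AC}(C)$ at each step. Accumulating these jumps via the triangle inequality and a Behrstock-type estimate gives growth of $d_C(f^{-m}(\partial_+C), f^n(\partial_-C))$ without ever assuming $g$ is pseudo-Anosov; periodicity and reducibility are then ruled out by comparing a putative periodic or reducing curve to these juncture iterates. Your overall architecture (end-periodicity from $f=\rho$ off $C$; ruling out periodic/reducing objects via large $C$-projection) is right, but the engine you propose for the projection growth needs to be replaced by this juncture-tracking argument.
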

 Given a choice of end-periodic map $\rho$, the above conditions are satisfied with ease by extending the core $C$ and taking sufficiently large powers of $g$ as needed.

 Consider a family of atoroidal end-periodic homeomorphisms constructed by the form $f_n=\rho g^n$ $n\neq 0$ with $g\in \Homeo(C, \partial C)$ a representative of a pseudo-Anosov mapping class. It follows that each $f_n$ has the same end-capacity, and since $C$ remains a core of each, its core capacity is bounded above by $\chi(C)$. Let $\Lam^+n$, $\Lam^-_n$ denote the Handel-Miller laminations of $f_n$. The following proposition shows that $d_C(\Lambda^+_n, \Lambda^-_n) \to 0$.
 
 \begin{proposition}\label{largesubproj}
 	Let $\rho$ be an end-periodic homeomorphism with juncture orbits $P_+$, $P_-$.	
  Let $g \in \Homeo(Y, \partial Y)$ be so that $g|_{Y_i}$ represents a fully supported mapping class on each connected component $Y_i\subset Y$.  
  Suppose that:	
 	\begin{enumerate}
 		\item the homeomorphism $f=\rho g$ is not a translation,
 		\item $Y_i$ meets $\Lambda^+$, $\Lambda^-$, $P^+$ and $P^-$.
 	\end{enumerate}
 	Then, 
 	$$\tau_{Y_i}(g)- d_{Y_i}(P^+, P^-) -2 \leq d_{Y_i}(\Lambda^+,\Lambda^-). $$
 \end{proposition}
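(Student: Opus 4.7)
The strategy is to exhibit concrete test loops $\alpha, \beta$ on $S$ simultaneously distinguishable by the $f$-juncture orbits and by the $\rho$-juncture orbits under the action of $g$, then pass from juncture orbits to Handel--Miller laminations via \Cref{lemma:comparelam}.

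\textbf{Step 1 (Junctures that serve both $f$ and $\rho$).} First I would pick a positive juncture $j^+$ of $\rho$ lying just outside the support $Y$ of $g$; since $Y_i$ meets $P^+$ by hypothesis, the representative can be chosen close enough to $Y$ so that $\rho^{-1}(j^+)$ meets $Y_i$ essentially. Because $g$ fixes any curve disjoint from $Y$ and $\rho$ is tight on the ladder $U_+$ bounded by $j^+$, the map $f=\rho g$ satisfies $f(U_+)=\rho(U_+)\subsetneq U_+$, so $j^+$ is also a positive juncture of $f$. Symmetrically, choose $j^-$ outside $Y$ with $\rho(j^-)$ meeting $Y_i$, which is again a negative juncture of both $\rho$ and $f$.

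\textbf{Step 2 (Key algebraic identities).} From $f=\rho g$ together with $g(j^\pm)=j^\pm$:
\[
 g\bigl(f^{-1}(j^+)\bigr) \;=\; g\,g^{-1}\rho^{-1}(j^+)\;=\;\rho^{-1}(j^+)\;\in\; P^+,\qquad
 f(j^-)\;=\;\rho\,g(j^-)\;=\;\rho(j^-)\;\in\; P^-.
\]
Set $\alpha:=f^{-1}(j^+)\in J^+_f$ and $\beta:=f(j^-)=\rho(j^-)\in J^-_f\cap P^-$. Since $g$ is supported on $Y_i$, the projection $\pi_{Y_i}(\alpha)=g^{-1}\bigl(\pi_{Y_i}(\rho^{-1}(j^+))\bigr)$ is nonempty as soon as $\pi_{Y_i}(\rho^{-1}(j^+))$ is, which is arranged in Step~1; similarly $\pi_{Y_i}(g(\alpha))$ and $\pi_{Y_i}(\beta)$ are nonempty.

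\textbf{Step 3 (Translation length and triangle inequality).} Apply \Cref{projcalculations}(iii) and (i):
\[
 \tau_{Y_i}(g)\;\leq\; d_{Y_i}\bigl(\alpha,g(\alpha)\bigr)\;\leq\; d_{Y_i}(\alpha,\beta)\;+\;d_{Y_i}\bigl(\beta,g(\alpha)\bigr).
\]
Because $\pi_{Y_i}(\alpha)\subset\pi_{Y_i}(J^+_f)$ and $\pi_{Y_i}(\beta)\subset\pi_{Y_i}(J^-_f)$, \Cref{projcalculations}(iv) bounds the first term by $d_{Y_i}(J^+_f,J^-_f)$; likewise $\pi_{Y_i}(\beta)\subset\pi_{Y_i}(P^-)$ and $\pi_{Y_i}(g(\alpha))\subset\pi_{Y_i}(P^+)$ bound the second by $d_{Y_i}(P^+,P^-)$. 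Finally \Cref{lemma:comparelam}(iii) applied to $f$ gives $d_{Y_i}(J^+_f,J^-_f)\leq d_{Y_i}(\Lambda^+,\Lambda^-)+2$, and combining yields the stated inequality.

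\textbf{Main obstacle.} The delicate point is Step~1: arranging representative junctures $j^\pm$ that are simultaneously (a) disjoint from $Y$, so that they double as junctures of $f$ and so that the identities of Step~2 hold, and (b) close enough to $Y_i$ that the first $\rho$-iterates $\rho^{-1}(j^+)$, $\rho(j^-)$ already enter $Y_i$ essentially, so the projections used in Step~3 are nonempty. The identity $g(f^{-1}(j^+))=\rho^{-1}(j^+)$ is clean only at the first iterate; for deeper iterates the expansion $f^{-n}=G_n\rho^{-n}$ with $G_n=g^{-1}(\rho^{-1}g^{-1}\rho)\cdots(\rho^{-(n-1)}g^{-1}\rho^{n-1})$ introduces extra $\rho$-conjugates of $g^{-1}$ whose supports $\rho^{-k}(Y)$ may interfere with $Y_i$. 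If the first-iterate placement cannot be achieved directly, the fallback is to work with such an $f^{-n}(j^+)$, observing that for suitable $n$ only the $g^{-1}$-factor in $G_n$ has support meeting $Y_i$ while the others act trivially on $\pi_{Y_i}$, so the same triangle bound carries through up to the same additive constant.
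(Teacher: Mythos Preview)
Your argument is essentially identical to the paper's: both pick $\rho$-junctures $j^\pm$ (the paper calls them $\alpha,\eta$) lying outside the support of $g$ so that they are simultaneously $f$-junctures, then use the identities $f^{-1}(j^+)=g^{-1}\rho^{-1}(j^+)$ and $f(j^-)=\rho(j^-)$ together with the triangle inequality and \Cref{lemma:comparelam} to bound $\tau_{Y_i}(g)$ from above by $d_{Y_i}(J^+_f,J^-_f)+d_{Y_i}(P^+,P^-)$. You are in fact more explicit than the paper about the one delicate point---arranging the chosen $\rho$-juncture to lie outside all of $Y$ (not just $Y_i$) so that it genuinely bounds a tight $f$-ladder---and your fallback sketch via deeper iterates is a reasonable way to handle the general case; the paper simply asserts this choice exists ``by compactness'' and the analogue of \Cref{lemma:junctureminl}.
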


 \begin{proof}[Proof of the proposition.]
 	Let  $\alpha, \eta$ be a pair of positive and negative junctures in $P^+$,$P^-$ respectively so that  $\rho^{-1}(\alpha)$ and $\rho(\eta)$ meet $Y_i$, but for $k\geq 0$, $\rho^{k}(\alpha)$ and $\rho^{-k}(\eta)$ are disjoint from $Y_i$ (this exists by the compactness of $Y$, see the related \Cref{lemma:junctureminl}). It follows that $\alpha$ and $\eta$ define juncture orbits $J^+ = \bigcup_{k\in\Z} f^k(\alpha)$ and $J^- = \bigcup_{k\in\Z} f^{k}(\eta)$ of $f$. Recall from \Cref{lemma:comparelam} that $d_C(J^+,J^-) - 2 \leq d_C (\Lambda^+, \Lambda^-)$.
 	\begin{align*}
 		\tau_{Y_i}(g) =\tau_{Y_i}(g^{-1}) &\leq d_{Y_i}(g^{-1}p^{-1}(\alpha), p^{-1}(\alpha)) \\
 		&\leq d_{Y_i}(g^{-1}p^{-1}(\alpha), \rho(\eta)) + d_{Y_i}(\rho(\eta), \alpha)\\
 	\end{align*}
 	We observe that $\rho(\eta)=\rho g(\eta)=f(\eta)$.

 	So, continuing the calculation from the previous line, we have:
 	\begin{align*}
 		&= d_{Y_i}(f^{-1}(\alpha), f(\eta)) + d_{Y_i}(\rho(\eta), \alpha)\\
 		&\leq  d_{Y_i}(J^+, J^-) + d_{Y_i}(P^+ , P^-)\\
 		&\leq  d_C(\Lambda^+,\Lambda^-) +2 + d_C(P^+, P^-)
 	\end{align*}

 \end{proof}
  Recall that $\tau(g)$ is the stable translation length of $g$ in $\mathcal{AC}(C)$, and that any fully supported has positive translation length, and $\tau(g^n) = n\tau(g)$.  
 
\subsection{Setup}
For the next few examples, we'll fix the following setup:
Let $L$ be the ladder surface, and as from \Cref{hsladder}, we let $\rho$ be a translation map corresponding to $\Sigma_2$ and $\beta$ as in \Cref{letrhobe}. We label each component of $\tilde \beta$ by the integers as pictured above, and let $C_i$ denote the core bounded by the separating curves $\tilde\beta_i$ and $\tilde\beta_{i+1}$.
\begin{figure}[h]
	\includegraphics[width=.75\textwidth]{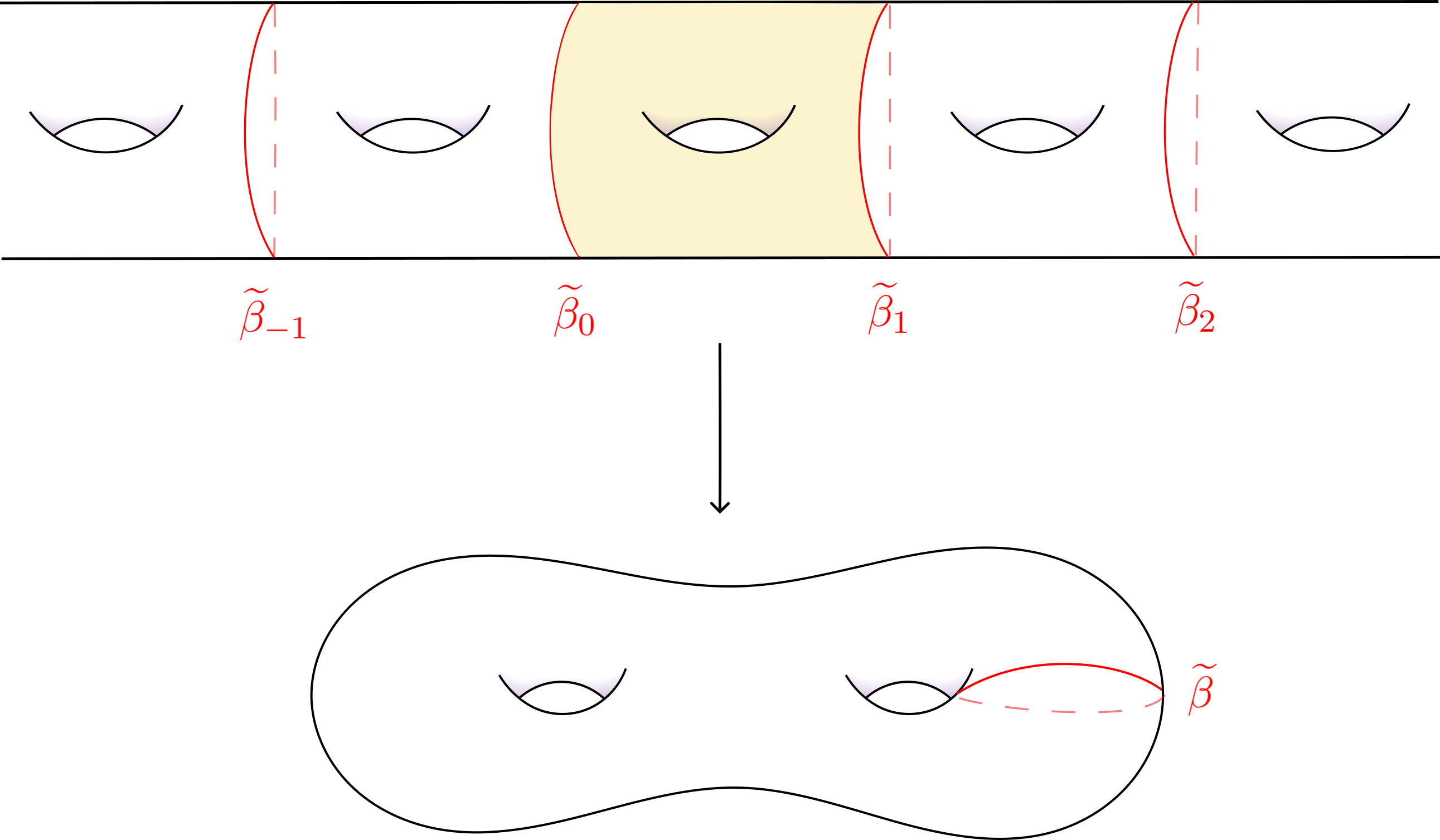}
	\caption{The curve $\beta$ lifts to the $\rho$-invariant, infinite-component multicurve $\displaystyle \tilde \beta= \bigcup_{n\in\Z} \tilde\beta_n$ colored in red. The core $C_0$ bounded by $\tilde\beta_0$ and $\tilde \beta_1$ is highlighted in yellow. \label{letrhobe}}
\end{figure}
\subsection{Short non-peripheral curves}
 The Handel-Miller laminations and juncture orbits give an obstruction to a curve being peripheral in the manifold. Recall that a loop $\gamma \subset S$ is a positively (negatively) escaping with respect to $f$ if and only if it is contained in $\mathcal{U_+}$ ($\mathcal{U}_-$), and that any escaping loop of $f$ is peripheral in $M_f$.

\begin{remark}\label{noEscape}
 If $\gamma\subset S$ is a positively escaping loop with respect to $f$, then for any essential connected subsurface $Y$, either:
 $\pi_Y(\Lam^-)=\emptyset$,
$\pi_Y(\gamma)=\emptyset$, or
$d_Y(\gamma, \Lam^-) \leq 1$.
Similarly, if $\gamma$ is negatively escaping, for any essential connected subsurface $Y$  either: 
 $\pi_Y(\Lam^+)=\emptyset$,
 $\pi_Y(\gamma)=\emptyset$, or
 $d_Y(\gamma, \Lam^+) \leq 1$.
 \end{remark}
 
 The remark follows from \Cref{escaping}, which states that each positively (resp. negatively) escaping loop $\gamma$ is disjoint from the negative (resp. positive) lamination. 

\begin{example}[Short non-peripheral curves]\label{pinchint}
	Let $L$ be the ladder surface, and suppose $Y$, a compact subsurface, is the union of essential connected components $Y_1, \ldots, Y_n$. Then, for every $\varepsilon>0$, there exists $f=f_\varepsilon$, a strongly irreducible end-periodic homeomorphism, and associated mapping torus $M_f$ so that:
	\begin{enumerate}
		\item No component of $\partial Y$ is peripheral in $\M_{f}$, and 
		\item $s^*$, the corresponding structure on $\M_{f}$ with totally geodesic boundary satisfies 
		$$ \ell_{s^*}(\partial Y) \leq \varepsilon .$$ 
	\end{enumerate}
\end{example}

\begin{proof}
 We will first choose an auxiliary end-periodic map $f_0$ for which $\Gamma=\partial Y$ is non-escaping.
Let $\rho$ be the translation depicted in \Cref{letrhobe}. We direct the reader to \Cref{projcalculations} to remind them of  a few fun facts which we will use for subsurface projection calculations throughout this argument and the rest of the section.

 Let $C'$ be a subsurface formed as the union of consecutive $C_i, \ldots C_{i+k}$ so that $Y$ is an essential subsurface of $C'$. We'll let $C= C_{i-1} \,\cup\, C' \,\cup\, C_{{i+k+1}}$ so that $\pi_C(\rho(\Gamma))$, $\pi_C(\rho^{-1}(\Gamma)) \neq \emptyset$ and $\rho(\partial_-C)$, $\rho^{-1}(\partial_+C)$, and $\Gamma$ are mutually disjoint multicurves contained in $C$. 
 
 By considering sufficiently large powers of an arbitrary pseudo-Anosov mapping class on $C$, we may choose $h \in \Homeo(C,\partial C)$ so that: 
\begin{enumerate}[label=(\roman*)]
	\item $d_{C}(\rho(\partial_- C), h(\rho(\partial_- C)) \geq 9$
	\item $d_C(h \rho(\partial_- C), \rho^{-1}(\Gamma)) \geq 5$
	\item $d_C(h^{-1}(\Gamma), \Gamma)\geq 5$	
\end{enumerate} 
	 From the conditions above and \Cref{fkllexamples}, it follows that $f_0=\rho h$ is strongly irreducible. Observe that $C$ must meet both $\Lam_0^+$ and $\Lam_0^-$, the Handel-Miller laminations of $f_0$ by \Cref{mustMeetLam}; otherwise produces a reducing loop in $\partial C$. With a direct application of \Cref{noEscape}, we see that $\Gamma$ is neither negatively nor positively escaping under the strongly irreducible map $f_0$, since its Handel-Miller laminations $\Lambda_0^\pm$ satisfy: 
	 \begin{align}\label{GammaNoEscape}
	 	\begin{split}
	 	 d_{\rho C}(\Lambda_0^-, \Gamma) &\geq d_{\rho C}(f_0^{2}(\partial_- C), \Gamma)-2\\	 	 
	 	 &= d_{\rho C}(\rho h \rho (\partial_- C), \Gamma) -2 \\
	 	 &\geq 3\\
	 \end{split}
	 \end{align}
	 \begin{align}\label{GammaNoEscape1}
	 		\begin{split}
	 	 d_{C}(\Lambda_0^+, \Gamma) &\geq d_{C}(f_0^{-1} (\partial_+ C), \Gamma) -1 \geq d_C(\Gamma, h^{-1}(\Gamma))-d_C(h^{-1}\rho^{-1}(\partial_+ C),
	 	   h^{-1}(\Gamma)) -1\\ 
	 	   &\geq 3.
	 	   \end{split}
	 \end{align}
 For each component $Y_i$ of $Y$, let $g_i \in \Homeo(Y_i, \partial Y_i)$ each be representatives of a fully supported mapping class. We let $g=\prod g_i$ be the composition over all components. Consider $f_n = \rho h g^n=f_0g^n$ with its associated Handel-Miller laminations $\Lambda^+_n$, $\Lambda^-_n$. Recall that each $f_n$ has bounded capacity genus, and that $g$ preserves $\partial_\pm C$. It follows that $f_n= \rho h g^n$ satisfies the conditions of \Cref{fkllexamples}, hence is a strongly irreducible homeomorphism. Our goal is to apply \Cref{largesubproj} to $f_0$ and $f_0 g^n$ to show that $d_{Y_i}(\Lambda^+_n, \Lambda^-_n)$ is arbitrarily large for each $Y_i \subset Y$.
 
Let $\eta= \partial_-C$ and $\alpha =\partial_+ C$. Note that $\alpha$, $\eta$ are positive and negative junctures of $f_n$ for all $n$.

We'll first check that $f_0^2(\eta)$, $f_0^{-1}(\alpha)$ meet each $Y_i$. This will verify that $$P^+ = \bigcup_{k\in\Z} f_0^k(\alpha), \quad P^-= \bigcup_{k\in\Z} f_0^{k}(\eta),$$ the $f_0$-juncture orbits for $\alpha$ and $\eta$, both meet $Y_i$. We showed in \cref{GammaNoEscape} and \cref{GammaNoEscape1} that projection distance between $\Gamma$ and $f_0^{-1}(\alpha)$, and $\Gamma$ and $f_0^2(\eta)$ is sufficiently large in $\AC(C)$; the respective multicurves meet each component of $\Gamma$, and therefore each component of $Y$ (see \Cref{projcalculations}). We will use the same reasoning to verify that $\Lam_n^+$ and $\Lam_n^-$  meet $Y_i$. This calculation also verifies that $\Gamma$ is neither positively nor negatively escaping. We start by using \Cref{lemma:comparelam} to jump from the laminations to a particular juncture.
\begin{align}
	\begin{split}
		d_{C}(\Lambda_n^+, \Gamma) &\geq d_{C}(f^{-1}_n(\alpha), \Gamma) -1 \geq d_{C}(g^{-n}h^{-1}\rho^{-1} (\alpha), \Gamma) -1 \geq d_{C}(f_0^{-1} (\alpha), \Gamma)-1 \\
		&\geq 4.
	\end{split}
\end{align}

For this calculation, let $W= \rho h(C)$. Recall that $g$ and $h$ are supported on $Y$ and $C$ respectively, and that $\rho(\eta)$ and $\rho^{-1}(\Gamma)$ are both disjoint from $\Gamma$. 
\begin{align*}
	 d_{W}(\Lam_n^-, \Gamma) &\geq d_{W}(f_n^{2}(\eta), \Gamma) -1\\ &=d_W(\rho h g^n \rho h g^n (\eta), \Gamma) -1 = d_W(\rho h g^n \rho (\eta), \Gamma) -1 = d_C(g^n \rho (\eta), h^{-1}\rho^{-1}(\Gamma))-1  \\
	&\geq -1 -d_C(g^n \rho (\eta), g^n (\Gamma)) + d_C(g^n(\Gamma), h^{-1}(\Gamma))- d_C(h^{-1}(\Gamma), h^{-1}\rho^{-1}(\Gamma)) \\
	&= d_C(\Gamma, h^{-1}(\Gamma)) -3 \geq 2. \\
\end{align*}
So, $f_0$ is an end-periodic homeomorphism whose juncture orbits $P^+$, $P^-$ meet $Y_i$ for each component $Y_i \subset Y$. And for each $n$, the Handel-Miller laminations $\Lam_n^+$, $\Lam_n^-$ both meet $Y_i$. By \Cref{largesubproj}, we have:
\[\tau_{Y_i}(g^n) - d_{Y_i}(P^+, P^-) -2 \leq d_{Y_i}(\Lambda^+,\Lambda^-).\]

Now, since $g|_{Y_i}\in \Map(Y_i)$ is fully supported, it acts loxodromically on $\mathcal{AC}(Y)$. Let $c$ be the constant from \Cref{mintrans} depending only on $Y$ so that $c|n| < \tau_{Y_i}(g^n)$. Let $B = \max \{d_{Y_i}(P^+, P^-) +2\}$.

Finally, let $\varepsilon >0$, $D = \capgen(f_0)$, and $K=K(D, \varepsilon)$ be the constant \Cref{thmb}. Above we verified that $f_n = \rho g^n h$ is a strongly irreducible homeomorphism for all $n$ with $\partial \M_{f_n} \cong \Sigma^+_2 \sqcup \Sigma^-_2$. Let $N \in \Z$ be so that $K \leq c|N|-B$. Let $M_N =(\M_{f_N}, s^*)$ be the corresponding compactified mapping torus with totally geodesic boundary. Since $d_Y(\Lambda_N^+, \Lambda_N^-) \geq K$, by \Cref{thmb} we have $\ell_{s^*}(\Gamma) =\ell_{s^*}(\partial Y) \leq \varepsilon$ as required.  
\end{proof}
\subsection{Thin totally geodesic structures}
The following example answers a question posed by Kent, and mirrors her main result in \cite{K05} which produces arbitrarily short curves of totally geodesic boundaries of knot complements. 

Let $\Sigma_+$ and $\Sigma_-$ each be two copies of a closed surface and let $\Sigma = \Sigma^+_g \cup \Sigma^-_g$.
Suppose that $(M, s^*) \in \mathfrak{F}(\Sigma)$, i.e. $M$ is an acylindrical compactified mapping torus of an strongly irreducible end-periodic map.

We let $(\Sigma_\pm, s^*)$ denote the totally geodesic metric on the boundary surface. We let $\ell_{s^*}(\Sigma)$ denote the length of shortest closed (simple) geodesic of $(\Sigma, s^*)$, i.e. the length of its systole.

\subsubsection{Setup} We will use $C = C_{-1} \,\cup\, C_0 \,\cup C_1$ be the core of $\rho$ on the ladder surface L as in \Cref{letrhobe}. Let $P^+ = P^- = \bigcup_{i\in\Z} \rho^i (\partial_-C_0)$. Let $\phi \in \Homeo(C,\partial C)$ be a representative of a pseudo-Anosov mapping class of $C$. 

\begin{remark}[Some bookkeeping]\label{bookkeeping}
Let $\varepsilon>0$, $D = \chi(C) + 4\xi(\Sigma_{2})^2$ and $K = K(D, \varepsilon)$ the corresponding constant from \Cref{thmb}.  

There is a power $N$ so that for $n\geq N$, $\rho \phi^n$ is strongly irreducible with  $\tau(\phi^n) \geq K(D,\varepsilon)+3$. Considering all powers of such maps, let $f(n,g)= {(\rho \phi^n)}^{g-1}$ for $g,n \in\Z$ and let $\Lambda^+(n,g), \Lambda^-(n,g)$ denote the corresponding positive and negative Handel-Miller laminations of the homeomorphism. 

Observe that for all $n\geq 1, g\geq 2$, we have $\xi(f(n,g))= 2\xi(\Sigma_{g})$, and $\chi(f(n,g)) \leq \chi(C)$, so that $\capgen(f(n,2)) \leq D$. 
\end{remark}

\begin{example}[Thin totally geodesic structures]\label{pinch}
	For every $g\geq 2$, $\varepsilon >0$, there exists $N>0$ so that for all $n\geq N$, the composition $f(n,g)= (\rho \phi^n)^{g-1}$ satisfies the following:
	\begin{enumerate}[label=(\roman*)]
		\item  Its corresponding compactified mapping torus $M(n,g)$ is an acylindrical hyperbolic manifold.
		\item  $\partial M(n,g) \cong \Sigma^+_g \sqcup \Sigma^-_g$, where each component is a closed genus-$g$ surface. 
		\item The components of $\partial C$ satisfy $\ell_{s^*}(\partial C) \leq \varepsilon$, where $s^*$ denotes the unique totally geodesic boundary structure of $M({n,g})$
	\end{enumerate}
\end{example}

In other words, this example produces $3$-manifolds with totally geodesic boundary structures of any genus $\geq 2$ and arbitrarily short systole. To make the notation easier on the eyes, we'll drop the $^*$ decoration so that any $s$ denotes the totally geodesic structure of the associated $3$-manifold. 

\begin{proof}
	We will first prove the statement when $\partial M$ is the disjoint union of closed genus-$2$ surfaces. We will construct a sequence of strongly irreducible homeomorphisms with bounded capacity genus with the aim of applying \Cref{thmb} to the appropriate subsurface of the infinite-type surface.

	Fix $\varepsilon >0$, let $D = \chi(C) + 4\xi(\Sigma_{2})^2$ and $f_2= f(N,2)= \rho \phi^N$ be the strongly irreducible homeomorphism as in \Cref{bookkeeping}. 
	
	With \Cref{largesubproj} and since $d_C(P^+, P^-) = 1$, we have $d_C(\Lambda^+, \Lambda^-)\geq K$. A direct application of \Cref{thmb} has that $\ell_{s}(\partial C) \leq \varepsilon$ in the corresponding totally geodesic structure $s$ for $f_2$. This concludes the genus-$2$ case. 

	To generalize to the genus-$g$ setting, we will consider covers of $M=\bar M_{f_2}$ corresponding to the compactified mapping tori of $f_g=(f({N,2}))^{g-1}$. Let $\tilde M=\M _{f_{g}}$, and observe that $\partial \tilde M \cong \Sigma^+_g \sqcup \Sigma^-_g$. Indeed, the ladders $U_\pm$ defined by $S-C= U_+ \cup U_-$ satisfy $f_g|_{U_\pm} = \rho |_{U_\pm}$. By directly analyzing $\rho$, we see that ${U}_\pm / \langle f_{g}\rangle$ is a genus $g$ surface.  
	
	The covering map from $\mathring{p}: M_{f_g} \to M_{f_2}$ extends to the compactifications $p: \tilde M \to M$. Let $(\tilde M, \tilde s)$ denote the metric obtained by the pullback on $(M, s)$ via the covering map $p$. Observe that $(\partial \tilde M, \tilde s) \hookrightarrow (\tilde M, \tilde s) $ is totally geodesic, hence $\tilde s$ is the unique such structure.
	
	Let $\alpha = \partial_+ C$ and $\eta = \partial_- C$. For each component $\tilde\alpha_0$ of $p^{-1}(\alpha)$, the restricted cover $p|_{\tilde\alpha_0}$, is degree one. Thus, $\ell_{\tilde s}(\tilde\alpha_0) = \ell_{s}(\alpha) \leq \varepsilon$, as shown in the genus-$2$ case. An identical argument shows $\ell_{\tilde s}(\tilde\eta_0)\leq \varepsilon$ for (any) component $\tilde \eta_0$ of $\tilde\eta$. Thus, for the strongly irreducible map $(\rho \phi^N)^{g-1}$,  we have $\ell_{\tilde s}(\Sigma_+), \ell_{\tilde s}(\Sigma_-) \leq \varepsilon$. 
\end{proof}

We conclude with a proof of \Cref{thmc}. 
\begin{proof}[Proof of \Cref{thmc}]
	We'll reintroduce the same characters from the previous example. Fix $\varepsilon>0$, $g\geq 2$, and let $D$, $K$ and $N$ be the constants from \Cref{bookkeeping} and \Cref{minsky}, and consider the map $f=\rho \phi^N$, and its compactified mapping torus $(M,s)$ with totally geodesic boundary.
	Once again, we consider $f^{g-1}$ and $p: (\tilde M, \tilde s) \to (M, s)$, the associated $g-1$-fold cover where $(\tilde M, \tilde s)$ denotes the metric corresponding to the pullback of $s$ under the map $p: \tilde M \to (M, s)$.  
	
	We choose a core and gluing homeomorphism $h$ just as \Cref{case-i} from \Cref{whichCore}. In particular, recall that $h: (\partial M, s) \to (\partial M, s)$ is an isometry, and the choices satisfy the conditions of \Cref{spf}. It follows from \Cref{ourspf} that the $h$-double $N = N(M, h)$ is hyperbolic and fibered by a closed surface $F$. Observe that $(\partial M, s)$ is totally geodesic in both $(M, s)$ and $N$, as it retains its totally geodesic structure from the gluing.
   	
   	\begin{claim}
   		The isometry $h$ lifts to an isometry $\tilde h: (\partial \tilde M, \tilde s) \to (\partial \tilde M, \tilde s)$. 
   	\end{claim}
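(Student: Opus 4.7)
The plan for proving the claim is to reduce the lifting to a simple homological check on each boundary component. First, I would identify the cover $\partial p$ carefully: the covering $p:\tilde M \to M$ is the $(g-1)$-fold cyclic cover corresponding to the composition $\pi_1(M)\to \Z \to \Z/(g-1)$, where the first map is the fibration class of $M\to S^1$. On each boundary component, restriction gives a connected cyclic cover $\Sigma_g^\pm \to \Sigma_2^\pm$ of degree $g-1$. Using the identifications $\Sigma_g^\pm = \mathcal{U}_\pm/\langle f^{g-1}\rangle$ and $\Sigma_2^\pm = \mathcal{U}_\pm/\langle f\rangle$, a loop $\gamma \subset \Sigma_2^\pm$ lifts to a closed loop in $\Sigma_g^\pm$ if and only if its $\langle f\rangle$-winding in $\mathcal{U}_\pm$ is divisible by $g-1$; this winding is precisely the pairing with the juncture class $[j]|_{\Sigma_2^\pm}$. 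So the subgroup of $H_1(\Sigma_2^\pm;\Z)$ classifying this abelian cover is
\[
K_\pm \;:=\; \ker\!\bigl(H_1(\Sigma_2^\pm;\Z)\xrightarrow{\,[j]\,}\Z\twoheadrightarrow\Z/(g-1)\bigr).
\]

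Next I would observe that, since $h$ was chosen in \Cref{case-i} to be a hyperelliptic involution on each $\Sigma_2^\pm$, the induced map $h_*$ acts as $-\id$ on $H_1(\Sigma_2^\pm;\Z)$, a standard fact in genus two. Consequently $h_*$ preserves \emph{every} subgroup of $H_1(\Sigma_2^\pm;\Z)$, in particular $K_\pm$. Because $\partial p$ is an abelian (hence regular) cover whose classifying subgroup is preserved, standard covering space theory yields a continuous lift $\tilde h^\pm:\Sigma_g^\pm\to\Sigma_g^\pm$ of $h|_{\Sigma_2^\pm}$; choosing one such lift for each sign and setting $\tilde h := \tilde h^+ \sqcup \tilde h^-$ produces the desired lift.

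Finally, promoting $\tilde h$ to an isometry is formal. The metric $\tilde s$ is defined exactly so that $\partial p$ is a local isometry; combined with the intertwining relation $\partial p\circ\tilde h = h\circ \partial p$ and the fact that $h$ is an isometry of $(\partial M, s)$, one verifies $\|\tilde h_* v\|_{\tilde s} = \|\partial p_* \tilde h_* v\|_s = \|h_*\partial p_* v\|_s = \|\partial p_* v\|_s = \|v\|_{\tilde s}$ for every tangent vector $v$ on $\partial \tilde M$.

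The only subtle step is the homological identification of $K_\pm$ via the juncture class; I expect the rest to be routine once this identification and the classical hyperelliptic action on $H_1$ are in hand.
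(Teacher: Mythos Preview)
Your proposal is correct and follows essentially the same approach as the paper: both identify the boundary cover via the kernel of $H_1(\Sigma_2^\pm)\to\Z\to\Z/(g-1)$ (the paper phrases this map as algebraic intersection with $\beta$, which in this setup coincides with your juncture class), use that the hyperelliptic involution acts by $-\id$ on $H_1$ and hence preserves this kernel, and then invoke the covering criterion plus the pullback definition of $\tilde s$ to promote the lift to an isometry. Your write-up is slightly more detailed on the isometry verification, but the argument is the same.
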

   	\begin{proof}[Proof of claim]
   		 Notice that for each component of $\tilde \Sigma_0 \subset \tilde \Sigma=p^{-1}(\Sigma)$, the covering map $\tilde \Sigma_0$ to $\Sigma$ is obtained by the kernel of the following map:
      \[\pi_1(\Sigma) \to H_{1}(\Sigma, \Z) \xrightarrow{\iota_\beta} \Z \xrightarrow{\mod{g-1}}  \Z_{g-1} \]
   
   And since $h$ acts by $-1$ on $H_1 (\Sigma)$, it preserves the corresponding kernel subgroup, thus by the covering criterion, it lifts to a homeomorphism $\tilde h$. Further, $\tilde h$ is a local isometry of $\tilde s$, and since it is a diffeomorphism, it is a global isometry on each component of $\Sigma$. 
   
   \end{proof}
   Let $\tilde N$ be the closed, hyperbolic $\tilde h$-double of $\tilde M$. The cover $p: \tilde M \to M$ and gluing map $\tilde h: \partial \tilde M \to \partial \tilde M$ extend to a cover $\pi: \tilde N \to N$ by mapping the copies of $\tilde M$ to $M$ via $p$ on each component of $\tilde N \fatbslash \partial \tilde M$; one should observe this is compatible with the gluing. Again, we see that $(\partial \tilde M, \tilde s)$ is totally geodesic in both $(\tilde M, \tilde s)$ and $\tilde N$.
   The $1$-dimensional foliation $\eL$ of $N$ lifts to a $1$-dimensional foliation $\tilde{\eL}$ of $\tilde N$ for which $\tilde F = \pi^{-1}(F)$ is a cross section. Further, the embeddings $\partial M \hookrightarrow M \hookrightarrow N$ lift:
	
		\begin{center}
		\begin{tikzcd}
		 (\Sigma^\pm_g, \tilde s) \arrow[d]\arrow[hookrightarrow]{r}& (\tilde M,\tilde s)  \arrow[hookrightarrow]{r} \arrow[d]{p}  & \tilde N \arrow[d]    \\
		 (\Sigma^\pm_2, s) \arrow[hookrightarrow]{r} &	(M ,s)  \arrow[hookrightarrow]{r}          & N.
		\end{tikzcd}
			\end{center}
Observe that each embedding is isometric. \Cref{pinch} shows that each component $\tilde \gamma \subset \widetilde{\partial C} \subset \partial \tilde M$ has $\ell_{\tilde s}(\tilde \gamma) \leq \ell_{\tilde s}(\partial C) \leq \varepsilon$, thus $\ell_{\tilde s}(\Sigma_g)\leq \varepsilon$. In summary, $\tilde N$ is a closed, fibered hyperbolic manifold with a totally geodesic embedding of (two copies of) $\Sigma_g$, a hyperbolic genus-$g$ surface with $\ell(\Sigma) \leq \varepsilon$.  
\end{proof}

\bibliographystyle{amsalpha}
\bibliography{ref}

\end{document}